\newtheorem{Assumption}{Assumption}[section]
\newtheorem{example}{Example}
\newcommand{\lbl}[1]{\label{#1}}
\newtheorem{remark}{Remark}[section]
\def\eps{\varepsilon}
\def\pa{\partial}
\def\ds{\mathrm{d}\mathbf{s}}
\def\nat{\nabla_{\Gamma}}
\def\nath{\nabla_{\Gamma_h}}
\def\wt{\mathbf{w}_{\Gamma}}
\def\divth{\mathrm{div}_{\Gamma_h}}
\def\Div{\operatorname{div}}
\def\Fh{\mathcal{F}_h}
\def\Eh{\mathcal{E}_h}
\def\bfx{\mathbf{x}}
\def\bx{\mathbf{x}}
\def\be{\mathbf{e}}
\def\bw{\mathbf{w}}
\def\bn{\mathbf{n}}
\def\bfn{\mathbf{n}}
\def\bfp{\mathbf{p}}
\def\bfP{\mathbf{P}}
\def\bbR{\mathbb{R}}
\def\ipue{(I_h u^e)|_{\Gamma_h}}
\def\mbf{\mathbf}
\title{A stabilized finite element method for advection-diffusion
equations on surfaces\footnotemark[1]}%
\author{Maxim A. Olshanskii\thanks{Department of Mathematics, University of Houston, Houston, Texas 77204-3008 and Department of Mechanics and Mathematics, Moscow State
University, Moscow, Russia, 119899
(Maxim.Olshanskii@mtu-net.ru).}
\and Arnold Reusken\thanks{Institut f\"ur Geometrie und Praktische  Mathematik, RWTH-Aachen
University, D-52056 Aachen, Germany (reusken@igpm.rwth-aachen.de,xu@igpm.rwth-aachen.de).}
\and Xianmin Xu$^\dag$\thanks{LSEC, Institute of Computational
  Mathematics and Scientific/Engineering Computing,
  NCMIS, AMSS, Chinese Academy of Sciences, Beijing 100190, China}
}
\begin{document}
\maketitle

\begin{abstract}
A recently developed Eulerian finite element method is applied to solve
advection-diffusion equations posed on hypersurfaces. When transport processes on a surface
dominate over diffusion, finite element methods tend to be unstable unless the mesh
is sufficiently fine. The paper introduces a stabilized finite element formulation based on the SUPG technique.
An error analysis of the method is given.
Results of numerical experiments are presented that illustrate the performance of the stabilized method.
\end{abstract}

\begin{keywords}surface PDE, finite element method, transport equations, advection-diffusion equation, SUPG stabilization
\end{keywords}

\begin{AMS}
  58J32, 65N12, 65N30, 76D45, 76T99
\end{AMS}

\pagestyle{myheadings}
\thispagestyle{plain}
\markboth{M. A. OLSHANSKII, A.~REUSKEN, AND X.~XU  }{A FEM FOR ADVECTION-DIFFUSION EQUATIONS ON SURFACES}

\section{Introduction}

Mathematical models involving  partial differential
equations posed on hypersurfaces occur in many applications.
Often surface equations are coupled  with other equations that are
formulated  in a (fixed) domain which contains the surface.
This happens, for example, in common models of
multiphase fluids dynamics  if one  takes so-called surface active agents
into account \cite{GrossReuskenBook}. The surface transport of such surfactants is typically driven by convection and surface diffusion and the relative strength of these two is measured by the dimensionless surface Peclet number $Pe_s=\frac{UL}{D_s}$. Here $U$ and $L$ denote typical velocity and lenght scales, respectively, and $D_s$ is the surface diffusion coefficient. Typical surfactants have surface diffusion coefficients in the range $D_s \sim 10^{-3}-10^{-5}~ cm^2/s$ \cite{Agrawal1988}, leading to (very) large surface Peclet numbers in many applications. Hence, such applications result in advection-diffusion equations on the surface
with dominating advection terms. The surface may evolve in time and
be available only implicitly (for example, as a zero level of a  level set function).

It is well known that  finite element  discretization methods for advection-diffusion problems  need
an additional  stabilization mechanism, unless the mesh size is sufficiently small to resolve
boundary and internal layers in  the solution of the differential equation.  For the planar case, this topic has been extensively studied
in the literature and a variety of stabilization methods has been developed, see, e.g., \cite{TobiskaBook}.
We are, however, not aware of any studies of stable finite element methods for
 advection-diffusion equations posed on surfaces.

In the past decade the study of numerical methods for  PDEs on  surfaces has been a rapidly growing research area.
The development of  finite element methods
for solving elliptic equations on surfaces can be traced back to the paper \cite{Dziuk88},
which considers a piecewise polygonal surface and uses a
finite element space on a triangulation of this discrete surface. This approach has been further analyzed and extended in several directions, see, e.g.,~\cite{Dziuk011} and the references therein.
Another approach has  been introduced in \cite{Deckelnick07} and builds on the ideas of \cite{Bertalmio01}. The method in that paper applies to cases in which the surface is
given implicitly by some level set function and the key idea is to solve the partial differential
equation on a narrow band around the surface. Unfitted finite element spaces on this narrow band are
used for discretization. Another surface finite element method  based on an outer (bulk) mesh has been introduced in \cite{Reusken08} and further studied in \cite{OlshanskiiReusken08,DemlowOlshanskii12}.
The main idea of this method is to use  finite element spaces that are induced by
triangulations of an outer domain  to discretize the partial differential
equation on the surface by considering \textit{traces} of the bulk finite element space on the surface, instead of
extending the PDE off the surface, as in \cite{Bertalmio01,Deckelnick07}.
The method is particularly suitable for problems in which
the surface is given implicitly by a level set or VOF function and in which there is a
coupling with a differential equation in a fixed outer domain.
If in such problems one uses finite element techniques for the discetization of
equations in the outer domain, this setting immediately results in an easy to implement discretization method for the surface equation.  The approach does not require additional surface
elements.

In this paper we reconsider the volume mesh finite element method from \cite{Reusken08} and study a new aspect, that has not been studied in the literature so far, namely the stabilization of advection-dominated problems.
We restrict ourselves to the case of a stationary surface.  To stabilize the discrete problem for the case
of large mesh Peclet numbers,  we introduce a surface variant of the SUPG method.
For  a class of stationary advection-diffusion equation  an error analysis is presented. Although the convergence of the method is studied using a SUPG norm similar to the planar case~\cite{TobiskaBook}, the analysis is not standard and contains new ingredients:
Some new approximation properties for the traces of finite elements are needed and geometric errors
require special control. The main theoretical result is given in Theorem~\ref{Th1}. It yields an error estimate
in the SUPG norm which  is almost robust in the sense that   the dependence on the Peclet number is  mild. This dependence is due to some  insufficiently controlled geometric errors, as will be explained in section~\ref{sec_disc}.

The remainder of the paper is organized as follows. In
section~\ref{S_cd}, we recall equations for  transport-diffusion processes on surfaces and present
the stabilized finite element method.
Section~\ref{sectanalysis} contains the  theoretical results of
the paper concerning the approximation properties of the finite element space
and discretization error bounds for the finite element method.  Finally, in section~\ref{sectexperiments}
results of numerical experiments are given for both  stationary and time-dependent advection-dominated surface transport-diffusion equations, which show that the stabilization performs well and that numerical results are consistent with what is expected from the  SUPG method in the planar case.

\section{Advection-diffusion equations on surfaces} \label{S_cd}
Let $\Omega$ be an open domain in $\mathbb{R}^3$ and $\Gamma$ be a connected $C^2$ compact
hyper-surface contained in $\Omega$.
For a sufficiently smooth function $g:\Omega\rightarrow \mathbb{R}$ the tangential derivative
at $\Gamma$ is defined by
\begin{equation}
 \nabla_{\Gamma} g=\nabla g-(\nabla g\cdot \mathbf{n}_{\Gamma})\mathbf{n}_{\Gamma},\lbl{e:2.1}
\end{equation}
where $\mathbf{n}_{\Gamma}$ denotes the unit normal to $\Gamma$. Denote by $\Delta_{\Gamma}$
the { Laplace-Beltrami operator} on $\Gamma$.
Let $\mathbf{w}:\Omega\rightarrow\mathbb{R}^3$ be a given divergence-free ($\Div\bw=0$) velocity field in $\Omega$. If the surface
$\Gamma$ evolves with a normal velocity of $\bw\cdot\bn_{\Gamma}$, then the conservation of a scalar quantity
$u$ with a diffusive flux on $\Gamma(t)$ leads to the surface PDE:
\begin{equation}
\dot{u} + (\Div_\Gamma\bw)u -\varepsilon\Delta_{\Gamma} u=0\qquad\text{on}~~\Gamma(t),
\label{transport}
\end{equation}
where $\dot{u}$ denotes the advective material derivative, $\eps$ is the diffusion coefficient. In \cite{Dziuk07} the problem \eqref{transport} was shown to be well-posed in a suitable weak sense.

In this paper, we study a finite element method for an advection-dominated problem on a \textit{steady} surface.     Therefore, we assume $\bw\cdot\bn_{\Gamma}=0$, i.e. the advection velocity is everywhere tangential to the surface. This and  $\Div\bw=0$ implies $\Div_\Gamma\bw=0$, and the surface advection-diffusion equation takes the form:
\begin{equation} u_t+ \mathbf{w}\cdot\nabla_{\Gamma} u -\varepsilon\Delta_{\Gamma} u=0
\qquad\text{on}~~\Gamma.
\lbl{e:2.2}
\end{equation}
Although the methodology and numerical examples of the paper are applied to the equations \eqref{e:2.2},
the error analysis will be presented for the stationary problem
\begin{equation}
  -\varepsilon\Delta_{\Gamma} u+\mathbf{w}\cdot\nabla_{\Gamma} u + c(\bfx)u =f\qquad\text{on}~~\Gamma, \lbl{e:2.3}
\end{equation}
with  $f\in L^2(\Gamma)$ and $c(\bfx)\ge0$. To simplify the presentation we assume $c(\cdot)$ to be constant, i.e. $c(x)=c \geq 0$. 
The analysis, however, also applies to non-constant $c$, cf. section~\ref{sec_disc}.
 Note that  \eqref{e:2.2} and \eqref{e:2.3} can be written in intrinsic surface quantities, since
$\mathbf{w}\cdot\nabla_{\Gamma} u = \mathbf{w}_{\Gamma}\cdot\nabla_{\Gamma} u$, with the tangential velocity $\mathbf{w}_{\Gamma}=\mathbf{w}-(\mathbf{w}\cdot\mathbf{n}_{\Gamma})\bfn_{\Gamma}$.
We assume  $\wt\in H^{1,\infty}(\Gamma)\cap L^{\infty}(\Gamma)$ and scale the equation so that
$\|\wt\|_{L^\infty(\Gamma)}=1$ holds. Furthermore, since we are interested in the advection-dominated case we take $\eps\in(0,1]$. Introduce the
bilinear form and the functional:
\begin{equation*}
\begin{aligned}
a(u,v)&:=\eps\int_{\Gamma}\nabla_{\Gamma} u\cdot\nabla_{\Gamma} v \, \ds+\int_{\Gamma}(\bw\cdot\nat u) v \, \ds
+\int_{\Gamma}c \, uv\, \ds, \\
f(v)&:=\int_{\Gamma}fv\, \ds.
\end{aligned}
\end{equation*}
The weak formulation of  \eqref{e:2.3} is as follows: Find $u\in V$ such that
\begin{equation}
a(u,v)=f(v) \qquad \forall v\in V,\lbl{e:2.5}
\end{equation}
with
$$V= \left\{
\begin{array}{ll}
 \{v\in H^1(\Gamma)\ |\ \int_{\Gamma} v\, \ds=0\} & \hbox{if } c= 0,\\
H^1(\Gamma) & \hbox{if } c>0.
     \end{array}
     \right.
$$
Due to  the Lax-Milgram lemma,  there exists a unique solution of \eqref{e:2.5}.
For the case $c=0$  the following Friedrich's  inequality~\cite{Sobolev} holds:
\begin{equation}
\|v\|_{L^2(\Gamma)}^2 \le C_F \|\nabla_\Gamma v\|_{L^2(\Gamma)}^2\quad
\forall~v\in V.
\lbl{FdrA}
\end{equation}

\subsection{The stabilized volume mesh FEM} \label{sectSUPG}
 In this section, we  recall the volume mesh FEM introduced in \cite{Reusken08} and describe its SUPG type stabilization.

Let $\{\mathcal{T}_h\}_{h>0}$ be a family of tetrahedral triangulations of
the domain $\Omega$. These triangulations
are assumed to be regular, consistent and stable. To simplify the presentation we assume that this family of triangulations is {quasi-uniform}. The latter assumption, however, is not essential for our analysis.
We assume that for each $\mathcal{T}_h$  a polygonal approximation  of
$\Gamma$, denoted by $\Gamma_h$, is given:
$\Gamma_h$ is a $C^{0,1}$ surface without  boundary and $\Gamma_h$ can
be partitioned in planar triangular segments.
It is important to note  that $\Gamma_h$ is not a ``triangulation of $\Gamma$'' in the  usual sense
(an $O(h^2)$ approximation of $\Gamma$, consisting of regular triangles). Instead,
we (only) assume that $\Gamma_h$ is \emph{consistent with the outer triangulation} $\mathcal{T}_h$
in the following sense.  For any
tetrahedron $S_T\in\mathcal{T}_h$ such that $\mathrm{meas}_2(S_T\cap\Gamma_h)>0$, define
$T=S_T\cap\Gamma_h$. We assume  that every $T\in\Gamma_h$  is a \textit{planar} segment
and thus it is  either a triangle or a quadrilateral. Each quadrilateral segment can be divided into two triangles, so we may assume that every $T$ is a triangle. An illustration of such a triangulation is given in Figure~\ref{fig:tri}. The results shown in this figure are obtained by representing a sphere $\Gamma$   implicitly by its signed distance function, constructing the piecewise linear nodal interpolation of this distance function on a uniform tetrahedral triangulation $\mathcal{T}_h$ of $\Omega$ and then considering the zero level of this interpolant.
\begin{figure}[ht!]
\begin{center}
\centering
  \includegraphics[width=0.45\textwidth]{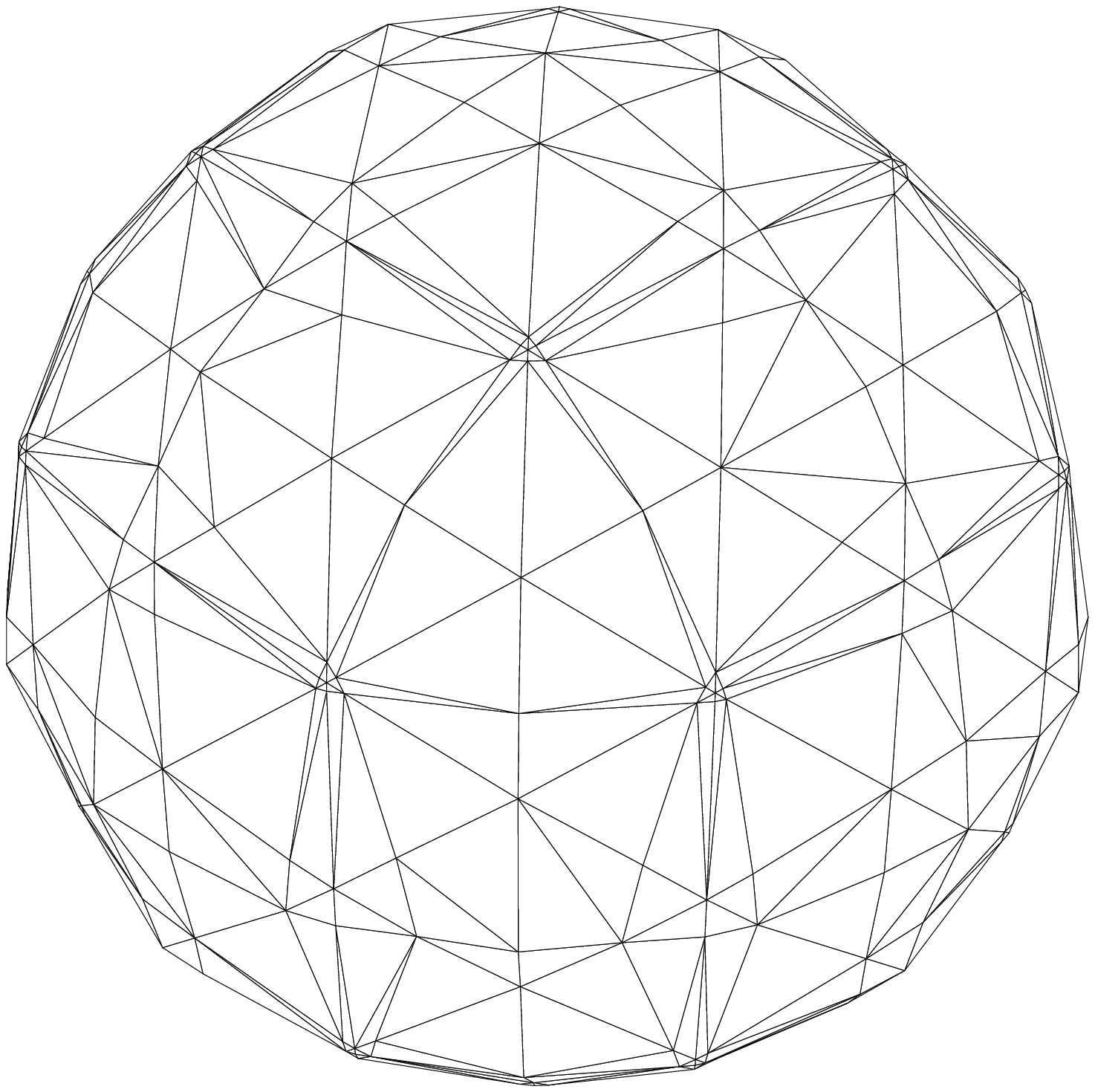}
  \includegraphics[width=0.45\textwidth]{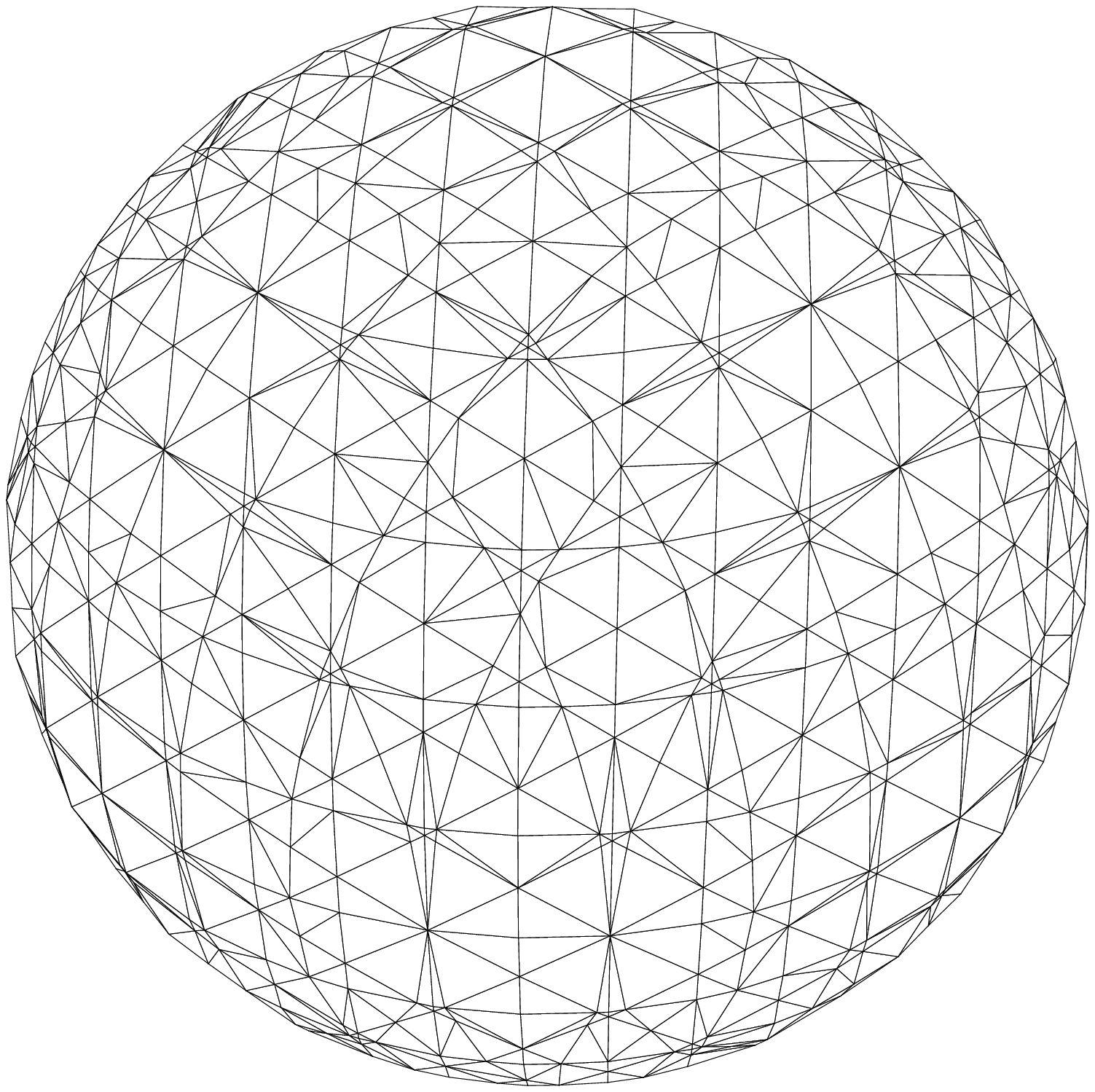}
\caption{Approximate interface $\Gamma_h$ for  a sphere, resulting from a coarse tetrahedral triangulation (left) and after one refinement (right).}
\label{fig:tri}
\end{center}
\end{figure}

Let $\mathcal{F}_h$ be the set of all  triangular segments $T$, then $\Gamma_h$ can
be decomposed as
\begin{equation} \label{defgammah}
 \Gamma_h=\bigcup\limits_{T\in\mathcal{F}_h} T.
\end{equation}
Note that the triangulation $\mathcal{F}_h$  is {not} necessarily  regular, i.e. elements from $T$ may have
very small internal angles and the size of neighboring triangles can vary strongly, cf.~Figure~\ref{fig:tri}.
In applications with level set functions (that represent $\Gamma$ implicitly), the approximation $\Gamma_h$ can be obtained as the zero level of a piecewise linear finite element  approximation of the level set function on the tetrahedral triangulation $\mathcal{T}_h$.

The surface finite element space is \textit{the space of traces on $\Gamma_h$ of all piecewise linear continuous functions with respect to the outer triangulation $\mathcal{T}_h$}.
This can be formally defined as follows.
We define a subdomain that contains $\Gamma_h$:
\begin{equation} \label{defomeg}
 \omega_h= \bigcup_{T \in \mathcal{F}_h} S_T,
\end{equation}
an a corresponding volume mesh finite element space
\begin{equation}
 V_h:=\{v_h\in C(\omega_h)\ |\ v_h|_{S_T}\in P_1~~ \forall\ T \in\mathcal{F}_h\},\lbl{e:2.6}
\end{equation}
where $P_1$ is the space of polynomials of degree one. $V_h$ induces the following
space on $\Gamma_h$:
\begin{equation}
 V_h^{\Gamma}:=\{\psi_h\in H^1(\Gamma_h)\ |\ \exists ~ v_h\in V_h\  \text{such that }\ \psi_h=v_h|_{\Gamma_h}\}.
\lbl{e:fem-space}
\end{equation}
When $c=0$, we require that any function $v_h$ from $V_h^{\Gamma}$ satisfies  $\int_{\Gamma_h}v_h \, \mathrm{d}\mathbf{s}=0$.
Given the surface finite element space $V_h^{\Gamma}$,  the finite element
discretization of \eqref{e:2.5} is as follows:   Find $u_h\in V_h^{\Gamma}$ such that
\begin{equation}
\eps\int_{\Gamma_h}\nath u\cdot\nath v\, \ds + \int_{\Gamma_h}(\bw^e\cdot\nath u) v\, \ds
 + \int_{\Gamma_h}c \,uv\,\ds=\int_{\Gamma_h}f^e v\, \ds \lbl{plainFEM}
\end{equation}
for all $v_h\in V_h^{\Gamma}$. Here $\mathbf{w}^e$ and $f^e$ are
the extensions  of $\wt$ and $f$, respectively, along normals to $\Gamma$ (the precise definition is given in the next section). Similar to the plain Galerkin finite element for  advection-diffusion equations, the method \eqref{plainFEM} is unstable unless the mesh is sufficiently fine such that the mesh Peclet number is less than one.

We introduce the following stabilized finite element method based on the standard SUPG approach, cf. \cite{TobiskaBook}:
 Find $u_h\in V_h^{\Gamma}$ such that
\begin{equation}
 a_h(u_h,v_h)=f_h(v_h)\quad \forall~ v_h\in V_h^{\Gamma}, \lbl{e:2.8}
\end{equation}
with
\begin{align}
a_h(u,v):=& \eps\int_{\Gamma_h}\nath u\cdot\nath v \, \ds
 + \int_{\Gamma_h}c \, uv\ds \nonumber \\
 &+ \frac12\left[\int_{\Gamma_h}(\bw^e\cdot\nath u) v \, \ds -\int_{\Gamma_h}(\bw^e\cdot\nath v) u\,  \ds \right] \label{eqah}\\
  &+\sum_{T\in\Fh}\delta_T\int_{T}(-\eps\Delta_{\Gamma_h}u + \bw^e\cdot\nath u + c\, u)\bw^e\cdot\nath v\, \ds, \nonumber \\
f_h(v):=&\int_{\Gamma_h}f^e v\ds + \sum_{T\in\Fh}\delta_T\int_{T}f^e(\bw^e\cdot\nath v)\, \ds. \label{eqfh}
 \end{align}
The stabilization parameter  $\delta_T$ depends on $T \subset S_T$. The diameter of the tetrahedron $S_T$ is denoted by $h_{S_T}$.  Let $\displaystyle \mathsf{Pe}_T:=\frac{h_{S_T} \|\mathbf{w}^e\|_{L^\infty(T)}}{2\eps}$
be the cell Peclet number.
We take
\begin{equation}
 \widetilde{\delta_T}=
\left\{
\begin{aligned}
&\frac{\delta_0 h_{S_T}}{\|\mathbf{w}^e\|_{L^\infty(T)}} &&\quad \hbox{ if } \mathsf{Pe}_T> 1,\\
&\frac{\delta_1 h^2_{S_T}}{\eps}  &&\quad \hbox{ if } \mathsf{Pe}_T\leq 1,
\end{aligned}
\right.\quad\text{and} \quad\delta_T=\min\{\widetilde{\delta_T},c^{-1}\}, \lbl{e:2.10}
\end{equation}
with some given positive constants  $\delta_0,\delta_1\geq 0$.

Since $u_h \in V_h^{\Gamma}$ is linear on every $T$ we have
$\Delta_{\Gamma_h} u_h =0$ on $T$, and thus
$a_h(u_h,v_h)$ simplifies to
\begin{multline}  \label{eqqr}
 a_h(u_h,v_h)=\eps\int_{\Gamma_h}\nath u_h\cdot\nath v_h \, \ds
+ \frac12\left[\int_{\Gamma_h}(\bw^e\cdot\nath u_h) v_h  -(\bw^e\cdot\nath v_h) u_h \, \ds \right] \\
+\int_{\Gamma_h}c \, u_h (v_h+\delta(\bfx) \bw^e\cdot\nath v_h)\, \ds+ \int_{\Gamma_h}\delta(\bfx)(\bw^e\cdot\nath u_h) (\bw^e\cdot\nath v_h) \, \ds,
\end{multline}
where $\delta(\bfx)=\delta_T$ for $\bfx\in T$.

\section{Error analysis} \label{sectanalysis}
The analysis in this section is organized as follows. First we collect some definitions and useful results in section~\ref{sectprelim}.
In section~\ref{sectcoercive}, we derive a coercivity result. In section~\ref{sectinter}, we present interpolation error bounds. In  sections~\ref{sectcont} and \ref{sectconsistency}, continuity and consistency results are derived.
Combining these analysis we obtain the  finite element  error bound given in section~\ref{sectmain}.
In the error analysis we use the following mesh-dependent norm:
\begin{equation} \label{defn}
 \| u \|_{\ast}:=\left(\eps\int_{\Gamma_h}|\nath u|^2\, \ds + \int_{\Gamma_h}\delta(\bfx)|\bw^e\cdot\nath u|^2\, \ds + \int_{\Gamma_h}c \,|u|^2\, \ds \right)^{\frac{1}{2}}.
\end{equation}
Here and in the remainder $|\cdot |$ denotes the Euclidean norm for vectors and the corresponding spectral norm
for matrices.

\subsection{Preliminaries}  \label{sectprelim}

For the hypersurface $\Gamma$, we define its $h$-neighborhood:
\begin{equation}
 U_h:=\{\mathbf{x}\in \mathbb{R}^3\ |\ \mathrm{dist}(\mathbf{x},\Gamma)<c_0 h\},\lbl{e:3.1}
\end{equation}
and assume  that $c_0$ is sufficiently large such that $\omega_h\subset U_h$ and $h$ sufficiently small such that
\begin{equation}
 5c_0 h<\left(\mathrm{max}_{i=1,2} \|\kappa_i\|_{L^{\infty}(\Gamma)}\right)^{-1}\lbl{e:3.2}
\end{equation}
holds, with $\kappa_i$ being the principal curvatures of $\Gamma$. Here and in what follows $h$ denotes
the maximum diameter for  tetrahedra of outer triangulation: $h=\max\limits_{S\in \omega_h} \text{diam}(S)$.

 Let $d : U_h\rightarrow \mathbb{R}$ be the signed
distance function, $|d(\mathbf{x})|=\mathrm{dist}(\mathbf{x},\Gamma)$ for all $\mathbf{x}\in U_h$. Thus
$\Gamma$ is the zero level set of $d$. We assume $d<0$ in the interior of $\Gamma$ and $d>0$ in the exterior and define $\mathbf{n}(\mathbf{x}):=\nabla d(\mathbf{x})$ for all $\mathbf{x}\in U_h$. Hence,  $\mathbf{n}=\mathbf{n}_{\Gamma}$ on $\Gamma$ and $|\mathbf{n}(\bfx)|=1$ for all $\bfx\in U_h$.  The Hessian of $d$ is denoted by
\begin{equation}
 \mathbf{H}(\bx):= \nabla^2 d(\bfx)\in \bbR^{3\times 3},\quad \bfx\in U_h.
\end{equation}
The eigenvalues of $\mathbf{H}(\bfx)$ are denoted by $\kappa_1(\bfx)$, $\kappa_2(\bfx)$, and~$0$.
 For $\bx\in\Gamma$ the eigenvalues $\kappa_i, i=1,2,$ are the principal curvatures.

For each $\bfx\in U_h$,  define the projection $\bfp: U_h\rightarrow\Gamma$ by
\begin{equation}
 \bfp(\bfx)=\bfx-d(\bfx)\bfn(\bfx).
\end{equation}
Due to the assumption \eqref{e:3.2},  the decomposition $\bfx=\bfp(\bfx)+d(\bfx)\bfn(\bfx)$ is unique.
We will need the orthogonal projector
\begin{equation*}
 \bfP(\bfx):=\mathbf{I}-\bfn(\bfx)\bfn(\bfx)^T, \quad \hbox{for }\bfx\in U_h.
\end{equation*}
Note that $\bn(\bx)= \bn(\bfp(\bx))$ and $\bfP(\bfx)=\bfP(\bfp(\bfx))$ for $\bx \in U_h$ holds.
The tangential derivative can be written as $\nabla_{\Gamma} g(\bfx)=\bfP\nabla g(\bfx)$ for $\bfx\in \Gamma$.
One can verify that for this projection and for the Hessian $\mathbf H$ the relation
$\mathbf H\bfP=\bfP\mathbf{H}=\mathbf{H}$ holds.
Similarly, define
\begin{equation}
 \bfP_h(\bfx):=\mathbf{I}-\bfn_{\Gamma_h}(\bfx)\bfn_{\Gamma_h}(\bfx)^T, \quad \hbox{for }\bfx\in \Gamma_h,~\bfx \hbox{ is not on an edge},
\end{equation}
where $\bfn_{\Gamma_h}$ is the unit (outward pointing) normal at $\bfx\in \Gamma_h$ (not on an edge). The tangential
derivative along $\Gamma_h$ is given by $\nabla_{\Gamma_h} g(\bfx)=\bfP_h(\bfx)\nabla g(\bfx)$  (not on an edge).
\\
\begin{Assumption} \label{ass1} \rm  In this paper, we assume that for all $T\in \mathcal{F}_h$:
\begin{align}
&\mathrm{ess\ sup}_{\bfx\in T}|d(\bfx)| \le c_1 h_{S_T}^2, \lbl{e:3.9}\\
&\mathrm{ess\ sup}_{\bfx\in T}|\bfn(\bfx)-\bfn_{\Gamma_h}(\bfx)| \le c_2   h_{S_T},\lbl{e:3.10}
\end{align}
where $h_{S_T}$ denotes the  diameter of the tetrahedron $S_T$ that contains $T$, i.e., $T=S_T \cap \Gamma_h$ and constants $c_1$, $c_2$ are independent of $h$, $T$.
\end{Assumption}
\ \\[1ex]
The assumptions \eqref{e:3.9} and \eqref{e:3.10} describe how accurate the piecewise planar approximation $\Gamma_h$ of $\Gamma$ is. If $\Gamma_h$ is constructed as the zero level of a piecewise linear interpolation of a level set function that characterizes $\Gamma$ (as in Fig.~\ref{fig:tri}) then these assumptions are fulfilled, cf. Sect. 7.3 in \cite{GrossReuskenBook}.

In the remainder, $A\lesssim B $ means $A\leq \tilde c B $ for some positive constant $\tilde c$ independent of $h$ and of the problem parameters $\eps$ and $c$.
$A\simeq B$ means that both $A\lesssim B $ and $B\lesssim A $.

For $\bfx\in\Gamma_h$, define
\begin{equation*}
 \mu_h(\bfx) = (1-d(\bfx)\kappa_1(\bfx))(1-d(\bfx)\kappa_2(\bfx))\bfn^T(\bfx)\bfn_h(\bfx).
\end{equation*}
The surface measures  $\ds$ and $\ds_{h}$ on $\Gamma$ and $\Gamma_h$, respectively, are related by
\begin{equation}
 \mu_h(\bfx)\ds_h(\bfx)=\ds(\bfp(\bfx)),\quad \bfx\in\Gamma_h. \lbl{e:3.16}
\end{equation}
The assumptions \eqref{e:3.9} and \eqref{e:3.10} imply that
\begin{equation}
 \mathrm{ess\ sup}_{\bfx\in\Gamma_h}(1-\mu_h)\lesssim h^2,\lbl{e:3.17}
\end{equation}
cf. (3.37) in \cite{Reusken08}.
The solution of  \eqref{e:2.3} is defined on $\Gamma$,
while its finite element approximation $u_h \in V_h^\Gamma$ is defined on $\Gamma_h$.
We need a suitable extension of a function from $\Gamma$ to its neighborhood. For a function $v$ on $\Gamma$ we define
\begin{equation}
 v^e(\bfx):= v(\bfp(\bfx)) \quad \hbox{for all } \bfx\in U_h.
\end{equation}
The following formula for this lifting function are known (cf. section 2.3 in \cite{Demlow06}):
\begin{align}
 \nabla u^e(\mathbf{x}) &= (\mathbf{I}-d(\mathbf{x})\mathbf{H})\nabla_{\Gamma} u(\bfp(\bfx)) \quad \hbox{ a.e. on } U_h,\label{grad1}\\
 \nabla_{\Gamma_h} u^e(\bfx) &= \bfP_h(\bfx)(\mathbf{I}-d(\mathbf{x})\mathbf{H})\nabla_{\Gamma} u(\bfp(\bfx)) \quad \hbox{ a.e. on } \Gamma_h,
\end{align}
with $\mathbf{H}=\mathbf{H}(\mathbf{x})$.
By direct computation one derives the relation
\begin{multline}
\nabla^2 u^e(\bfx)=(\bfP -d(\bfx) \mathbf{H})\nabla^2_{\Gamma} u(\bfp(\bfx))(\bfP -d(\bfx) \mathbf{H})-(n^T\nabla_{\Gamma} u(\bfp(\bfx))\mathbf{H}\\
-(\mathbf H\nabla_{\Gamma} u(\bfp(\bfx))) \bfn^T-\bfn (\mathbf H\nabla_{\Gamma} u(\bfp(\bfx)))^T -d\nabla_{\Gamma} \mathbf H:\nabla_{\Gamma} u(\bfp(\bfx)).
 \lbl{e:3.14n}
\end{multline}
For  sufficiently smooth $u$ and $|\mu| \leq 2$, using this relation  one obtains the estimate
 \begin{equation}
  |D^{\mu} u^e(\bfx)|\lesssim \left(\sum_{|\mu|=2}|D_{\Gamma}^{\mu} u(\bfp(\bfx))| + |\nabla_{\Gamma} u(\bfp(\bfx))|\right)\quad \hbox{a.e. on } U_h,
\lbl{e:3.13}
 \end{equation}
(cf. Lemma 3 in \cite{Dziuk88}). This further leads to (cf. Lemma 3.2 in \cite{Reusken08}):
 \begin{equation}
 \|D^{\mu} u^e\|_{L^2(U_h)}\lesssim\sqrt{h}\|u\|_{H^2(\Gamma)}, \quad |\mu|\leq 2. \lbl{e:3.14}
 \end{equation}

The next lemma is needed for the analysis in the following section.

\begin{lemma}\label{lem_div}  The following holds:
\[
\|\divth\bw^e\|_{L^\infty(\Gamma_h)}\lesssim h \|\nabla_{\Gamma}\bw\|_{L^\infty(\Gamma)}.
\]
\end{lemma}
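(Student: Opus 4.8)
The plan is to reduce everything to a pointwise identity for $\divth\bw^e$ as the trace of a matrix product, isolate a leading term that vanishes because $\wt$ is divergence-free on $\Gamma$, and then bound the surviving contributions by $O(h)$ using the geometric Assumption~\ref{ass1}. For a.e. $\bfx\in\Gamma_h$ (away from edges) I would start from $\divth\bw^e=\operatorname{tr}\!\big((\nabla\bw^e)\bfP_h\big)$, where $\nabla\bw^e$ is the full ambient Jacobian of the extended field. Since $\bw^e(\bfx)=\wt(\bfp(\bfx))$, applying the gradient identity \eqref{grad1} componentwise gives $\nabla\bw^e(\bfx)=W(\mathbf I-d(\bfx)\mathbf H)$, where $W=W(\bfp(\bfx))$ is the tangential Jacobian of $\wt$ on $\Gamma$, i.e.\ the matrix whose $i$-th row is $\nabla_\Gamma (\wt)_i$. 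Because each such row is tangential, $W\bfP=W$, and consequently $\operatorname{tr}W=\divt\wt$.

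Next I would expand $\operatorname{tr}\!\big(W(\mathbf I-d\mathbf H)\bfP_h\big)$ using $\bfP_h=\bfP+(\bfP_h-\bfP)$ and linearity of the trace, producing three pieces. The leading piece is $\operatorname{tr}(W\bfP)=\operatorname{tr}W=\divt\wt$ evaluated at $\bfp(\bfx)\in\Gamma$, which vanishes identically: $\bw\cdot\bn_\Gamma=0$ together with $\Div\bw=0$ forces $\divt\wt=\divt\bw=0$ on $\Gamma$. The first remainder $\operatorname{tr}\!\big(W(\bfP_h-\bfP)\big)$ is estimated by $\|W\|\,\|\bfP_h-\bfP\|$; since $\bfP_h-\bfP=\bn\bn^T-\bn_{\Gamma_h}\bn_{\Gamma_h}^T$ has spectral norm $\lesssim|\bn-\bn_{\Gamma_h}|$, assumption \eqref{e:3.10} bounds this by $h\,\|\nabla_\Gamma\wt\|_{L^\infty(\Gamma)}$. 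The second remainder carries the factor $d$, so by \eqref{e:3.9} (namely $|d|\lesssim h^2$) and the boundedness of $\mathbf H$ it is $O(h^2)\,\|\nabla_\Gamma\wt\|_{L^\infty(\Gamma)}$. Summing the three pieces, taking the essential supremum over $\Gamma_h$, and using that $\nabla_\Gamma\wt=\nabla_\Gamma\bw$ on $\Gamma$ (because $\wt=\bw$ there) yields the claimed estimate.

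The routine parts are the componentwise matrix bookkeeping behind $\nabla\bw^e=W(\mathbf I-d\mathbf H)$ and the elementary bound $\|\bn\bn^T-\bn_{\Gamma_h}\bn_{\Gamma_h}^T\|\lesssim|\bn-\bn_{\Gamma_h}|$. The one conceptual point that must be handled carefully — and the sole reason the bound is $O(h)$ rather than merely $O(1)$ — is that the $O(1)$ term is precisely $\divt\wt$, which is zero on $\Gamma$; every surviving contribution is then multiplied either by the normal discrepancy $\bn-\bn_{\Gamma_h}$ or by the distance $d$, both of which Assumption~\ref{ass1} controls. I expect this cancellation, rather than any technical estimate, to be the crux of the argument.
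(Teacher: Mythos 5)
Your proof is correct and follows essentially the same route as the paper's: both express $\divth\bw^e$ as a trace, apply \eqref{grad1} to get the factor $(\mathbf{I}-d\mathbf{H})\nabla_{\Gamma}\bw(\bfp(\bfx))$, split off the leading term $\operatorname{tr}\left(\bfP\nabla_{\Gamma}\bw(\bfp(\bfx))\right)=\Div_\Gamma\bw(\bfp(\bfx))=0$, and bound the two remainders via $|\bfP_h-\bfP|\lesssim h$ and $|d|\,|\mathbf{H}|\lesssim h^2$ from Assumption~\ref{ass1}. The only differences are notational (your row-Jacobian $W$ and the order of the factors inside the trace versus the paper's $\bfP_h(\mathbf{I}-d\mathbf{H})\nabla_\Gamma\bw$), which are immaterial by the symmetry of $\bfP_h$, $\mathbf{H}$ and the cyclicity of the trace.
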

\begin{proof} We use the following representation for the tangential divergence:
\begin{equation}\label{e:Marz}
\Div_\Gamma\bw(\bx)=\operatorname{tr}(\nabla_\Gamma\bw(\bx))= \operatorname{tr}(\bfP\nabla\bw(\bx)).
\end{equation}
Take $\bx\in\Gamma_h$, not lying on an edge. Using  \eqref{grad1} we obtain
\begin{align*}
 & \divth\bw^e(\bx) \\ & =\operatorname{tr}(\bfP_h\nabla\bw^e(\bx))=
\operatorname{tr}\left(\bfP_h(\mathbf{I}-d(\mathbf{x})\mathbf{H})\nabla_{\Gamma} \bw(\bfp(\bfx))\right)\\&=
\operatorname{tr}\left(\bfP\nabla_{\Gamma} \bw(\bfp(\bfx))\right)
+\operatorname{tr}\left((\bfP_h-\bfP)\nabla_{\Gamma} \bw(\bfp(\bfx))\right)-
d(\mathbf{x})\operatorname{tr}\left(\bfP_h\mathbf{H}\nabla_{\Gamma} \bw(\bfp(\bfx))\right).
\end{align*}
The first term vanishes due to
$
\operatorname{tr}\left(\bfP\nabla_{\Gamma} \bw(\bfp(\bfx))\right)=
\Div_\Gamma\bw(\bfp(\bfx))=0$.
The second and the third term can be bounded using \eqref{e:3.9}, \eqref{e:3.10}:
\[
|\bfP_h-\bfP|\lesssim h,\quad |d(\mathbf{x})\bfP_h\mathbf{H}|\lesssim h^2.
\]
This proves the lemma.
\end{proof}

\subsection{Coercivity analysis} \label{sectcoercive}
In the next lemma we present a coercivity result. We use the norm introduced in \eqref{defn}.

\begin{lemma} \label{thm0}
The following holds:
\begin{equation}\lbl{coercivity}
 a_h(v_h,v_h)\geq \frac{1}{2}\|v_h\|_{\ast}^2 \quad \text{for all}~~v_h \in V_h^\Gamma.
\end{equation}
\end{lemma}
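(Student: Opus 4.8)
The plan is to establish coercivity of $a_h$ in the $\|\cdot\|_\ast$ norm by evaluating $a_h(v_h,v_h)$ term by term using the simplified form \eqref{eqqr}. The key structural feature to exploit is that the convective part of $a_h$ has been deliberately written in \emph{skew-symmetric} form, namely $\frac12[\int_{\Gamma_h}(\bw^e\cdot\nath v_h)v_h - (\bw^e\cdot\nath v_h)v_h]$, which vanishes identically when tested against $v_h$ itself. So the first thing I would do is substitute $u_h = v_h = w_h$ into \eqref{eqqr} and observe that this antisymmetric bracket is exactly zero.

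What remains after that cancellation is
\[
a_h(v_h,v_h) = \eps\int_{\Gamma_h}|\nath v_h|^2\,\ds + \int_{\Gamma_h}\delta(\bfx)|\bw^e\cdot\nath v_h|^2\,\ds + \int_{\Gamma_h}c\,|v_h|^2\,\ds + \int_{\Gamma_h}c\,\delta(\bfx)\,v_h\,(\bw^e\cdot\nath v_h)\,\ds.
\]
The first three terms are precisely $\|v_h\|_\ast^2$, so the entire difficulty is controlling the sign-indefinite cross term $\int_{\Gamma_h}c\,\delta\, v_h\,(\bw^e\cdot\nath v_h)\,\ds$. The plan is to bound it below using a weighted Cauchy--Schwarz / Young inequality: writing $|c\,\delta\,v_h\,(\bw^e\cdot\nath v_h)| \le \frac12 c\,v_h^2 + \frac12 c\,\delta^2 (\bw^e\cdot\nath v_h)^2$ pointwise and integrating. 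The two resulting pieces must be absorbed into the $\int c\,|v_h|^2$ term and the $\int \delta|\bw^e\cdot\nath v_h|^2$ term respectively, each with the factor $\frac12$ to spare, which is exactly where the target constant $\frac12$ in \eqref{coercivity} comes from.

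The \textbf{main obstacle} is making the absorption of the second piece rigorous, since I need $\frac12 c\,\delta^2 \le \frac12\,\delta$ pointwise, i.e. $c\,\delta(\bfx) \le 1$. This is precisely what the truncation $\delta_T = \min\{\widetilde{\delta_T}, c^{-1}\}$ in definition \eqref{e:2.10} guarantees: by construction $c\,\delta_T \le 1$ on every $T$, so $c\,\delta^2 \le \delta$ holds everywhere on $\Gamma_h$. (In the case $c=0$ the cross term vanishes outright and there is nothing to absorb.) Recognizing that the otherwise mysterious cap $c^{-1}$ in the stabilization parameter is exactly the ingredient needed to close this estimate is the crux of the argument. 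Once that pointwise bound is in place, combining the Young splitting with the bound $c\,\delta^2 \le \delta$ yields
\[
\int_{\Gamma_h}c\,\delta\,v_h\,(\bw^e\cdot\nath v_h)\,\ds \ge -\tfrac12\!\int_{\Gamma_h}\!c\,|v_h|^2\,\ds - \tfrac12\!\int_{\Gamma_h}\!\delta|\bw^e\cdot\nath v_h|^2\,\ds,
\]
and substituting back leaves $a_h(v_h,v_h) \ge \eps\int|\nath v_h|^2 + \frac12\int\delta|\bw^e\cdot\nath v_h|^2 + \frac12\int c\,|v_h|^2 \ge \frac12\|v_h\|_\ast^2$, completing the proof.
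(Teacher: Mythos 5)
Your proof is correct and is essentially identical to the paper's: both exploit the skew-symmetric form of the convective term to reduce $a_h(v_h,v_h)$ to $\|v_h\|_\ast^2$ plus the cross term $\int_{\Gamma_h}c\,\delta\,v_h(\bw^e\cdot\nath v_h)\,\ds$, and both absorb that term via Young's inequality together with the pointwise bound $c\,\delta(\bfx)\le 1$ guaranteed by the truncation in \eqref{e:2.10}. No gaps.
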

\begin{proof}
For any $v_h\in V_h^{\Gamma}$, we have
\begin{equation}\label{aux2}
 a_h(v_h,v_h)= \|v_h\|_{\ast}^2
+\int_{\Gamma_h}c\, \delta(\bfx)v_h(\bw^e\cdot\nath v_h)\, \ds.
\end{equation}

The choice of $\delta_T$, cf. \eqref{e:2.10}, implies $c \, \delta(\bfx) \le1$. Hence the last term in \eqref{aux2} can be  estimated as follows:
\begin{align*}
 & |\int_{\Gamma_h}c \, \delta(\bfx)v_h(\bw^e\cdot\nath v_h)\, \ds|\\ &  \le \frac12\int_{\Gamma_h} c \, v_h^2\, \ds + \frac12\int_{\Gamma_h}c \, \delta(\bfx)^2(\bw^e\cdot\nath v_h)^2\, \ds\le \frac12 \|v_h\|_{\ast}^2.
\end{align*}
This yields \eqref{coercivity}.
\end{proof}
\ \\
As a consequence of this result,  we obtain the well-posedness of the discrete problem \eqref{e:2.8}.

\subsection{Interpolation error bounds} \label{sectinter}
Let $I_h:C(\bar{\omega}_h)\rightarrow V_h$ be the nodal interpolation operator.
For any $u\in H^2(\Gamma)$ the surface finite element function
$(I_h u^e)|_{\Gamma_h}\in V_h^{\Gamma}$ is an interpolant
of $u^e$ in $V_h^{\Gamma}$.

 For any $u\in H^2(\Gamma)$ the following estimates hold~\cite{Reusken08}:
\begin{align}
\|u^e-(I_h u^e)|_{\Gamma_h}\|_{L^2(\Gamma_h)}&\lesssim h^2\|u\|_{H^2(\Gamma)}, \lbl{e:3.18}\\
\|\nath u^e-\nath(I_h u^e)|_{\Gamma_h}\|_{L^2(\Gamma_h)}&\lesssim h\|u\|_{H^2(\Gamma)}.\lbl{e:3.19}
\end{align}

Using these results we easily obtain an  interpolation error estimate in the $\|\cdot\|_{\ast}$- norm:
\begin{lemma}
 For any $u\in H^2(\Gamma)$ the following holds:
\begin{equation} \lbl{e:3.20}
\| u^e-(I_h u^e)|_{\Gamma_h} \|_{\ast} \lesssim h(\eps^{1/2}+h^{1/2}+c^{\frac12}h) \|u\|_{H^2(\Gamma)}.
\end{equation}
\end{lemma}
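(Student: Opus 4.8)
The plan is to write $w := u^e-(I_h u^e)|_{\Gamma_h}$ and to split the squared norm $\|w\|_{\ast}^2$ into its three defining contributions, bounding each separately with the interpolation estimates \eqref{e:3.18} and \eqref{e:3.19} already available. The diffusion term is immediate: $\eps\int_{\Gamma_h}|\nath w|^2\,\ds = \eps\|\nath w\|_{L^2(\Gamma_h)}^2 \lesssim \eps h^2\|u\|_{H^2(\Gamma)}^2$ by \eqref{e:3.19}, which supplies the $\eps^{1/2}$ contribution. The reaction term is equally direct, since $c$ is constant: $c\int_{\Gamma_h}|w|^2\,\ds = c\|w\|_{L^2(\Gamma_h)}^2 \lesssim c\,h^4\|u\|_{H^2(\Gamma)}^2$ by \eqref{e:3.18}, which supplies the $c^{1/2}h$ contribution (note the extra power of $h$ gained from the $L^2$ rate).

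The only delicate term is the SUPG part $\int_{\Gamma_h}\delta(\bfx)|\bw^e\cdot\nath w|^2\,\ds$. Here I would first estimate pointwise $|\bw^e\cdot\nath w|^2 \le \|\bw^e\|_{L^\infty(T)}^2\,|\nath w|^2$ on each $T\in\Fh$, and then establish the key uniform bound $\delta_T\|\bw^e\|_{L^\infty(T)}^2 \lesssim h + \eps$ directly from the definition \eqref{e:2.10}. Since $\delta_T\le\widetilde{\delta_T}$, two cases arise. If $\mathsf{Pe}_T>1$, then $\widetilde{\delta_T}\|\bw^e\|_{L^\infty(T)}^2 = \delta_0\,h_{S_T}\|\bw^e\|_{L^\infty(T)} \lesssim h_{S_T}$, using the normalization $\|\wt\|_{L^\infty(\Gamma)}=1$ together with the inherited bound $\|\bw^e\|_{L^\infty(\Gamma_h)}\le\|\wt\|_{L^\infty(\Gamma)}\lesssim 1$ (which holds because $\bw^e(\bfx)=\wt(\bfp(\bfx))$ and $\bfp(\bfx)\in\Gamma$). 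If $\mathsf{Pe}_T\le1$, then the very definition of $\mathsf{Pe}_T$ gives $\|\bw^e\|_{L^\infty(T)}\le 2\eps/h_{S_T}$, so $\widetilde{\delta_T}\|\bw^e\|_{L^\infty(T)}^2 \le \frac{\delta_1 h_{S_T}^2}{\eps}\cdot\frac{4\eps^2}{h_{S_T}^2} \lesssim \eps$. Summing over $T$ and invoking \eqref{e:3.19} once more yields $\int_{\Gamma_h}\delta(\bfx)|\bw^e\cdot\nath w|^2\,\ds \lesssim (h+\eps)\|\nath w\|_{L^2(\Gamma_h)}^2 \lesssim (h+\eps)\,h^2\|u\|_{H^2(\Gamma)}^2$.

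Finally I would collect the three bounds to obtain $\|w\|_{\ast}^2 \lesssim \bigl(\eps h^2 + (h+\eps)h^2 + c\,h^4\bigr)\|u\|_{H^2(\Gamma)}^2 \lesssim h^2(\eps + h + c\,h^2)\|u\|_{H^2(\Gamma)}^2$, and then take the square root. To recover the factorized form in \eqref{e:3.20}, I would use the elementary inequality $a^2+b^2+d^2 \le (a+b+d)^2$ for nonnegative $a,b,d$ with $a=\eps^{1/2}$, $b=h^{1/2}$, $d=c^{1/2}h$, which gives $\eps+h+ch^2\le(\eps^{1/2}+h^{1/2}+c^{1/2}h)^2$ and hence the claimed estimate. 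I expect the main obstacle to be the SUPG term, and in particular the case distinction on the cell P\'eclet number: the point is that the two regimes conspire so that $\delta_T\|\bw^e\|_{L^\infty(T)}^2$ never exceeds $\mathcal{O}(h+\eps)$, with the convection-dominated regime contributing only the benign $h^3$ term and the diffusion-dominated regime folding harmlessly into the $\eps h^2$ term. The normalization $\|\wt\|_{L^\infty(\Gamma)}=1$, and the resulting $\|\bw^e\|_{L^\infty(\Gamma_h)}\lesssim 1$, are exactly what close the first case.
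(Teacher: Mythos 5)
Your proof is correct and follows essentially the same route as the paper: split $\|\cdot\|_\ast^2$ into its three terms, use \eqref{e:3.18} and \eqref{e:3.19} for the reaction and diffusion parts, and control the SUPG term via the case distinction in \eqref{e:2.10} to bound $\delta_T\|\bw^e\|_{L^\infty(T)}^2$ (the paper bounds this by $h$ in both regimes, whereas you get $\eps$ in the diffusion-dominated case, which folds harmlessly into the $\eps h^2$ term). No gaps.
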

\begin{proof} Define $\varphi:=u^e-(I_h u^e)|_{\Gamma_h} \in H^1(\Gamma_h)$.
Using the definition \eqref{e:2.10} of $\delta(\bfx)$, we get
\begin{equation}\label{aux3} \begin{split}
 \int_{\Gamma_h}\delta(\bfx)|\bw^e\cdot\nath\varphi|^2\ds & =
\sum_{T\in\Fh}\int_{T}\delta_T|\bw^e\cdot\nath\varphi|^2\, \ds \\
 & \lesssim h\|\nath\varphi\|_{L^2(\Gamma_h)}^2\lesssim h^3 \|u\|_{H^2(\Gamma)}^2 .
\end{split} \end{equation}
The remaining two terms in  $\|u^e-(I_h u^e)\|_{\ast}$ are estimated in a straightforward way using  \eqref{e:3.18} and \eqref{e:3.19}. This and \eqref{aux3} imply the inequality \eqref{e:3.20}.
\end{proof}

The next lemma estimates the interpolation error on the edges of the surface triangulation. In the remainder, $\Eh$ denotes the set of all edges in the interface triangulation $\mathcal{F}_h$.


\begin{lemma}\lbl{prop:3.3}
For all $u\in H^2(\Gamma)$ the following holds:
\begin{equation}
 \left(\sum_{E\in\Eh}\int_{ E}(u^e-I_h u^e)|_{\Gamma_h}^2\, \ds\right)^{1/2}\lesssim h^{3/2} \|u\|_{H^2(\Gamma)}.\lbl{e:3.24}
\end{equation}
\end{lemma}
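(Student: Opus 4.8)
The plan is to avoid working on the surface triangle $T$ itself, which by construction may be arbitrarily degenerate (very small angles, strongly varying sizes), and instead to transfer the edge estimate to the shape-regular bulk tetrahedron $S_T \supset T \supset E$. The crucial observation is that $\varphi := u^e - I_h u^e$ is a genuine bulk function on $\omega_h \subset U_h$: the lifted solution $u^e$ is defined on $U_h$ and $I_h u^e \in V_h$ is piecewise linear on the outer triangulation $\mathcal{T}_h$. Hence every edge $E\in\Eh$ is a line segment lying in a single tetrahedron $S_T$ whose shape is controlled by the regularity (and quasi-uniformity) of $\mathcal{T}_h$, independently of the geometry of $\Fh$.

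First I would establish a scaled trace inequality from the tetrahedron to a one-dimensional segment: for any segment $E\subset S_T$ and any $\varphi\in H^2(S_T)$,
\[
\|\varphi\|_{L^2(E)}^2 \lesssim h_{S_T}^{-2}\|\varphi\|_{L^2(S_T)}^2 + \|\nabla\varphi\|_{L^2(S_T)}^2 + h_{S_T}^2\|\nabla^2\varphi\|_{L^2(S_T)}^2 .
\]
This follows by the usual affine pullback to a reference tetrahedron $\hat S$ together with the Sobolev embedding $H^2(\hat S)\hookrightarrow C(\overline{\hat S})$ (valid since $\dim\hat S = 3 < 4 = 2\cdot 2$), which gives $\|\hat\varphi\|_{L^2(\hat E)}^2 \le |\hat E|\,\|\hat\varphi\|_{L^\infty(\hat S)}^2 \lesssim \|\hat\varphi\|_{H^2(\hat S)}^2$ \emph{uniformly} in the position and length of the reference segment $\hat E$ (one only uses $|\hat E|\le\operatorname{diam}(\hat S)$). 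Scaling back and invoking the shape-regularity of $\mathcal{T}_h$ produces the three terms above with an $h$-independent constant; note that the shape of $T$ never enters. Next I would insert the standard $P_1$ interpolation bounds on the shape-regular $S_T$, namely $\|\varphi\|_{L^2(S_T)}\lesssim h_{S_T}^2\,|u^e|_{H^2(S_T)}$ and $\|\nabla\varphi\|_{L^2(S_T)}\lesssim h_{S_T}\,|u^e|_{H^2(S_T)}$, and use that $I_h u^e$ is linear on $S_T$, so $\nabla^2\varphi=\nabla^2 u^e$ and $\|\nabla^2\varphi\|_{L^2(S_T)}=|u^e|_{H^2(S_T)}$. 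All three contributions then collapse to $\lesssim h_{S_T}^2\,|u^e|_{H^2(S_T)}^2\lesssim h^2\,|u^e|_{H^2(S_T)}^2$ by quasi-uniformity.

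Finally I would sum over $E\in\Eh$. Since each tetrahedron $S_T$ contains only a bounded number of the edges in $\Eh$ (those of its one or two surface triangles) and the tetrahedra of $\mathcal{T}_h$ have disjoint interiors, the summation has bounded overlap, giving $\sum_{E\in\Eh}\|\varphi\|_{L^2(E)}^2 \lesssim h^2\sum_{T}|u^e|_{H^2(S_T)}^2 \lesssim h^2\,\|u^e\|_{H^2(\omega_h)}^2$. The global bound \eqref{e:3.14} (with $\omega_h\subset U_h$) then yields $\|u^e\|_{H^2(\omega_h)}^2 \lesssim h\,\|u\|_{H^2(\Gamma)}^2$, and combining gives $\sum_{E}\|\varphi\|_{L^2(E)}^2 \lesssim h^3\,\|u\|_{H^2(\Gamma)}^2$, which is \eqref{e:3.24} after taking the square root. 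The main obstacle, and the only point needing real care, is the trace inequality itself: the naive estimate would use a trace from the planar triangle $T$ to its edge, but that constant blows up as $T$ degenerates. Routing instead through the three-dimensional, shape-regular $S_T$ — at the price of needing the full $H^2$-control supplied by the $H^2\hookrightarrow C$ embedding in $\mathbb{R}^3$ — is precisely what makes the argument robust against the irregularity of $\Fh$.
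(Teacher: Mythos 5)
Your argument is correct, and it follows the same overall architecture as the paper's proof: avoid the possibly degenerate surface triangle $T$, route the edge estimate through the shape-regular bulk tetrahedron $S_T$, apply the standard $P_1$ interpolation bounds on $S_T$, sum with bounded overlap, and finish with $\|u^e\|_{H^2(\omega_h)}\lesssim h^{1/2}\|u\|_{H^2(\Gamma)}$ from \eqref{e:3.14}. Where you genuinely diverge is in the mechanism of the trace from $S_T$ to the one-dimensional segment $E$. The paper does this in two stages: it first observes that $E$ lies on a face $W$ of $S_T$ and invokes Lemma~3 of \cite{Hansbo02} to pass from $W$ to $E$ (an inequality needing only $H^1(W)$), and then applies the standard tetrahedron-to-face trace inequality to $\phi$ and to each $\partial_{x_i}\phi$ to pull the $W$-norms back to $S_T$; the second derivatives of $u^e$ enter only through this last step. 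You instead prove a single one-shot trace inequality from the three-dimensional reference tetrahedron to an arbitrary segment via the embedding $H^2(\hat S)\hookrightarrow C(\overline{\hat S})$, with uniformity in the segment coming from $|\hat E|\le\operatorname{diam}(\hat S)$, and then scale. Your route is more self-contained (no external cut-element trace lemma) and applies to \emph{any} segment in $S_T$, not only those lying on a face of $S_T$ --- a point that is actually slightly delicate in the paper's version, since the diagonal edge introduced when a quadrilateral cut is split into two triangles need not lie on a face of $S_T$. The price is that your trace inequality is intrinsically an $L^\infty$-type bound requiring full $H^2(S_T)$ regularity of the traced function, whereas the paper's two-stage route keeps each individual trace step at the $H^1$ level; for this lemma the distinction is immaterial because both end up charging everything to $\|u^e\|_{H^2(S_T)}$, which \eqref{e:3.14} controls.
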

\begin{proof}
Define $\phi:= u^e-I_h u^e \in H^1(\omega_h)$.
Take $E\in\Eh$ and let $T \in \mathcal{F}_h$  be a corresponding planar segment of which $E$ is an edge. Let $W$ be a side of the tetrahedron $S_T$ such that $E \subset W$.
 From Lemma 3 in \cite{Hansbo02} we have
\[
  \|\phi\|_{L^2(E)}^2 \lesssim h^{-1} \|\phi\|_{L^2(W)}^2 + h \|\phi\|_{H^1(W)}^2.
\]
 From the standard trace inequality
\[
 \|w\|_{L^2(\partial S_T)}^2 \lesssim h^{-1} \|w\|_{L^2(S_T)}^2 + h \|w\|_{H^1(S_T)}^2 \quad \text{for all}~~ w \in H^1(S_T),
\]
applied to $\phi$ and $\partial_{x_i} \phi$, $i=1,2,3$, we obtain
\begin{align*}
 h^{-1} \|\phi\|_{L^2(W)}^2 & \lesssim h^{-2} \|\phi\|_{L^2(S_T)}^2 +  \|\phi\|_{H^1(S_T)}^2, \\
h \|\phi\|_{H^1(W)}^2& \lesssim \|\phi\|_{H^1(S_T)}^2 + h^2 \|u^e\|_{H^2(S_T)}^2 .
\end{align*}
 From standard error bounds for the nodal interpolation operator $I_h$ we get
\[
  \|\phi\|_{L^2(E)}^2\lesssim h^{-2} \|\phi\|_{L^2(S_T)}^2+  \|\phi\|_{H^1(S_T)}^2 + h^2 \|u^e\|_{H^2(S_T)}^2 \lesssim h^2 \|u^e\|_{H^2(S_T)}^2.
\]
Summing over $E\in\Eh$ and using $\|u^e\|_{H^2(\omega_h)} \lesssim h^\frac12 \|u\|_{H^2(\Gamma)}$, cf. \eqref{e:3.14}, results in
\[
 \sum_{E\in\Eh} \|\phi\|_{L^2(E)}^2 \lesssim  h^2 \|u^e\|_{H^2(\omega_h)}^2 \lesssim h^3 \|u\|_{H^2(\Gamma)}^2,
\]
which completes the proof. \end{proof}

\subsection{Continuity estimates} \label{sectcont}

In this section we derive a  continuity estimate for
the bilinear form $a_h(\cdot,\cdot)$. If one applies partial integration to the integrals that occur in $a_h(\cdot,\cdot)$ then \emph{jumps across the edges} $E \in \Eh$
occur. We start with a lemma that yields  bounds for such jump terms.
Related to these jump terms we introduce the following notation.
For each $T\in\Fh$, denote by ${\mbf{m}_h}|_{E}$ the outer normal to an edge $E$ in the plane which contains element $T$. Let $[\mbf{m}_h]|_{E}=\mbf{m}_h^+ +\mbf{m}_h^-$ be the jump of the outer normals to the edge in two neighboring elements, c.f. Figure \ref{fig:3.1}.
\begin{figure}[ht!]
\vspace*{-2mm}
    \centering
  \resizebox{!}{5.5cm}
    {\includegraphics[ height=60mm]{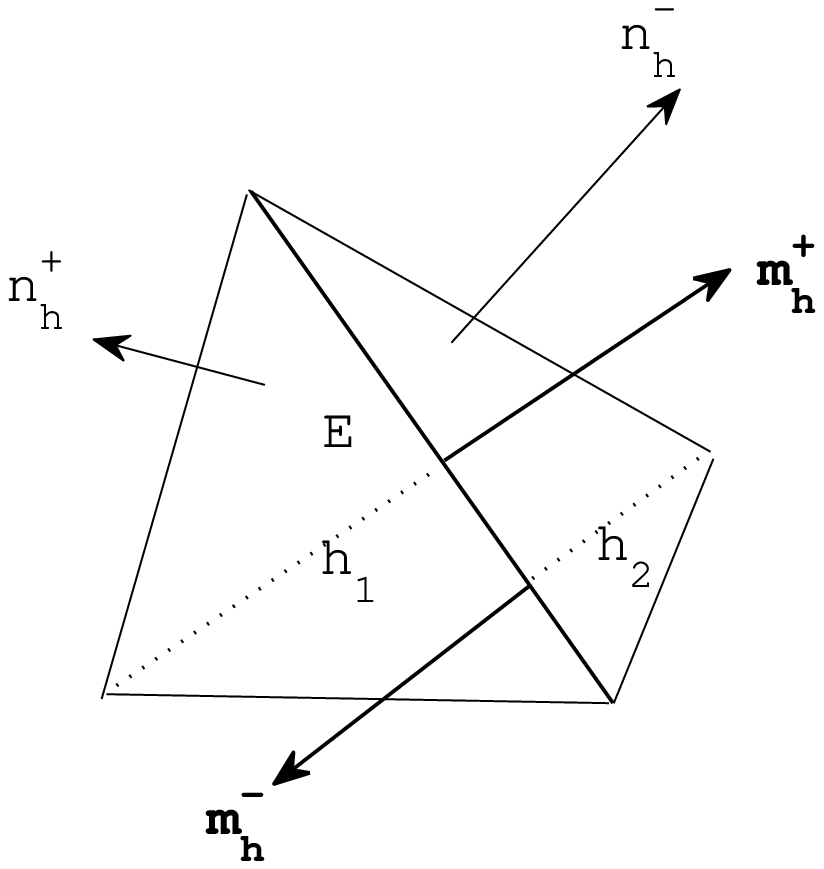}}
    \vspace*{-10mm}
    \caption{ }
   \lbl{fig:3.1}
 \end{figure}

\begin{lemma}\lbl{lem:jumpvel}
The following holds:
\begin{equation} \label{jump_est}
 |\bfP(\bx)[\mbf{m}_h](\bfx)|\lesssim h^2
\qquad a.e.\ \bfx\in E. 
\end{equation}
\end{lemma}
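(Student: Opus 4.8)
The plan is to reduce the estimate to a statement about the element normals and to exploit that the $O(h)$ part of the conormal jump is purely normal to $\Gamma$, hence annihilated by $\bfP$. Fix an edge $E$ with its two adjacent planar segments $T^+,T^-\in\Fh$, let $\bn_h^+,\bn_h^-$ be their (constant) unit normals, and let $\be$ be a unit tangent vector of $E$. Since $E$ lies in the plane of each segment, $\bn_h^\pm\cdot\be=0$, and the outward conormals can be written as $\bm_h^+=\bn_h^+\times\be$ and $\bm_h^-=-\bn_h^-\times\be$; the opposite signs reflect that, the two segments being almost coplanar, the conormals point in nearly opposite directions, and the global orientation of $\be$ is irrelevant for the final (magnitude) bound. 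Subtracting, the common normal drops out and I obtain the key identity
\[
[\bm_h]=\bm_h^++\bm_h^-=(\bn_h^+-\bn_h^-)\times\be=(\mathbf{g}^+-\mathbf{g}^-)\times\be,\qquad \mathbf{g}^\pm:=\bn_h^\pm-\bn,
\]
where $\bn=\bn(\bx)$ is the normal of $\Gamma$ at $\bfp(\bx)$.

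Next I would record the smallness of the relevant inner products. By Assumption~\ref{ass1} (applied on $T^\pm$), $|\mathbf{g}^\pm|=|\bn_h^\pm-\bn|\lesssim h$. Because $\bn_h^\pm$ and $\bn$ are unit vectors, $0=|\bn+\mathbf{g}^\pm|^2-1=2\,\bn\cdot\mathbf{g}^\pm+|\mathbf{g}^\pm|^2$, so the normal component of each $\mathbf{g}^\pm$ is second order: $|\bn\cdot\mathbf{g}^\pm|=\tfrac12|\mathbf{g}^\pm|^2\lesssim h^2$, and therefore $|\bn\cdot(\mathbf{g}^+-\mathbf{g}^-)|\lesssim h^2$. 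Likewise, using $\bn_h^\pm\cdot\be=0$ gives $\bn\cdot\be=(\bn-\bn_h^\pm)\cdot\be$, whence $|\bn\cdot\be|\lesssim h$, i.e. $E$ is tangent to $\Gamma$ up to first order.

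Finally I would project. For arbitrary $\mathbf{a},\mathbf{b}$, decomposing $\mathbf{a}=\bfP\mathbf{a}+(\bn\cdot\mathbf{a})\bn$ (and similarly $\mathbf{b}$) and noting that $\bfP\mathbf{a}\times\bfP\mathbf{b}$ is parallel to $\bn$ while $\bfP\mathbf{a}\times\bn$ and $\bn\times\bfP\mathbf{b}$ are tangential, one gets
\[
\bfP(\mathbf{a}\times\mathbf{b})=(\bn\cdot\mathbf{b})(\bfP\mathbf{a}\times\bn)+(\bn\cdot\mathbf{a})(\bn\times\bfP\mathbf{b}),
\]
so that $|\bfP(\mathbf{a}\times\mathbf{b})|\le|\bn\cdot\mathbf{b}|\,|\mathbf{a}|+|\bn\cdot\mathbf{a}|\,|\mathbf{b}|$. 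Applying this with $\mathbf{a}=\mathbf{g}^+-\mathbf{g}^-$ and $\mathbf{b}=\be$ and inserting the two bounds above yields
\[
|\bfP(\bx)[\bm_h](\bx)|\lesssim |\bn\cdot\be|\,|\mathbf{g}^+-\mathbf{g}^-|+|\bn\cdot(\mathbf{g}^+-\mathbf{g}^-)|\lesssim h\cdot h+h^2\lesssim h^2,
\]
which is the claim.

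The conceptual core, and the only nonroutine point, is the observation that the cross product of two vectors that are tangent to $\Gamma$ up to small defects is normal to $\Gamma$ up to the product of those defects; this is precisely what turns the generic $O(h)$ size of $[\bm_h]$ into an $O(h^2)$ bound after applying $\bfP$. The main things to be careful about are the orientation bookkeeping in the identity for $[\bm_h]$ (treating the signs of the two conormals consistently) and the fact that $\bn$ here is the lifted normal $\bn(\bfp(\bx))$ evaluated at $\bx\in E\subset\Gamma_h$, so that Assumption~\ref{ass1} is what legitimately controls $|\bn_h^\pm-\bn|$ on each $T^\pm$.
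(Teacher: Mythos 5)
Your proof is correct and follows essentially the same route as the paper's: both reduce to the identity $[\mathbf{m}_h]=(\mathbf{n}_h^+-\mathbf{n}_h^-)\times\mathbf{e}$ (the paper writes it as $\mathbf{s}_h\times(\mathbf{n}_h^+-\mathbf{n}_h^-)$), observe via Assumption~\ref{ass1} and the unit-length constraint that the normal component of $\mathbf{n}_h^+-\mathbf{n}_h^-$ is $O(h^2)$ while its tangential part is only $O(h)$, and then use that $\bfP$ annihilates the cross product of two tangential vectors. Your explicit formula for $\bfP(\mathbf{a}\times\mathbf{b})$ is just a tidier packaging of the paper's term-by-term expansion, so no substantive difference.
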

\begin{proof}
Let $E$  be the common side of two elements $T_1$ and $T_2$ in $\mathcal{F}_h$, and
$\bfn_h^+$, $\bfn_h^-$,
  $\mbf m _h^+$ and $\mbf m_h^-$ are the unit normals as illustrated in Figure~\ref{fig:3.1}.
 Denote by $\mbf s_h$ the unit (constant) vector along the common side $E$, which can be represented as $\mbf s_h=\bfn_h^{+}\times \mbf m_h^{+}= \mbf m_h^{-}\times\bfn_h^{-}$.  The jump across $E$ is given by
\[
[\mbf{m}_h]=\mbf s_h\times(\bfn_h^{+}-\bfn_h^{-}).
\]
For each $\bfx\in E$ and $\bfp(\bfx)\in\Gamma$, let $\bfn=\mbf{n}(\bfp(\bfx))$ be the unit normal to $\Gamma$ at $\bfp(\bfx)$ and $\bfP= \bfP(\bx)= \mathbf{I} - \mbf{n} \mbf{n}^T$ the corresponding orthogonal projection.
Using \eqref{e:3.10}, we get
\begin{equation*}
|\bfn_h^--\bfn_h^+ |\le |\bfn_h^+-\bfn|+ |\bfn_h^--\bfn| \lesssim h_{S_{T_1}}+h_{S_{T_2}} \lesssim h.
\end{equation*}
Since $|\bfn_h^-|=|\bfn_h^+|=|\bfn|=1$, the above estimate implies
\[
\bfn_h^+-\bfn_h^- =c h^2 \bfn + \be_1,\quad \be_1\perp \bfn,\quad |\be_1|\lesssim h.
\]
We also have
\[
\mbf s_h=\bfn_h^{+}\times \mbf m_h^{+}=(\bfn+(\bfn_h^{+}-\bfn))\times \mbf m_h^{+}=
\bfn\times \mbf m_h^{+}+\be_2,\quad |\be_2|\lesssim h.
\]
We use the decomposition
\[
 \bfP [\mbf{m}_h]= \bfP \left[(\bfn\times \mbf m_h^{+}+\be_2) \times \left( c h^2 \bfn + \be_1\right)\right].
\]
Since $\be_1\perp \bfn$ we have $(\bfn\times \mbf m_h^{+}) \times \be_1\parallel \bfn$ and thus $\bfP \big((\bfn\times \mbf m_h^{+}) \times \be_1\big)=0$.
Therefore, we get
\begin{equation}\label{aux20}
|\bfP [\mbf{m}_h]|\lesssim h^2+|\be_1|\ |\be_2|\lesssim h^2,
\end{equation}
i.e., the result \eqref{jump_est} holds.
\end{proof}
\ \\[1ex]
In the analysis below, we need an inequality of the form $\|v_h\|_{L^2(\Gamma_h)} \lesssim \|v_h\|_\ast$ for all $v_h\in V_h^\Gamma$. This result can be obtained as follows. First we consider the case $c=0$. Then the functions $v_h \in  V_h^\Gamma$ satisfy $\int_{\Gamma_h} v_h \, \ds =0$. We assume that in $V_h^\Gamma$ a discrete analogon of the Friedrich's inequality \eqref{FdrA} holds uniformly with respect to $h$, i.e., there exists a constant $C_F$ independent of $h$ such that
\begin{equation} \label{FrB}
 \|v_h\|_{L^2(\Gamma_h)}^2 \leq C_F \|\nabla_{\Gamma_h} v_h\|_{L^2(\Gamma_h)}^2 \quad \text{for all}~~v_h \in V_h^\Gamma.
\end{equation}
Now we reduce the parameter domain $\eps \in (0,1],~c>0$ as follows. For a given generic constant $c_0$ with $0< c_0 <1$, in the remainder  we restrict to the parameter set
\begin{equation} \label{pardomain}
\eps \in (0,1],~~c \in \{0\} \cup [ c_0 \eps, \infty).
\end{equation}
For $c>0$ we then have
\begin{align*}
 c_0 \|v_h\|_{L^2(\Gamma_h)}^2 & \leq \frac{2 c_0}{c_0+1} \frac{1}{c} \|c^\frac12 v_h\|_{L^2(\Gamma_h)}^2 \leq \frac{2}{c/c_0 + c} \|v_h\|_\ast^2 \leq \frac{2}{\eps +c} \|v_h\|_\ast^2,
\end{align*}
and combing this with the result in \eqref{FrB} for the case $c=0$ we get
\begin{equation} \label{basic}
 \|v_h\|_{L^2(\Gamma_h)} \lesssim \frac{1}{\sqrt{\eps + c}} \|v_h\|_\ast \quad \text{for all}~~v_h\in V_h^\Gamma
\end{equation}
and arbitrary $\eps \in (0,1],~c \in \{0\} \cup [ c_0 \eps,  \infty)$.\\

In the proof of Theorem~\ref{thm2} below we need a bound for $\|v_h\|_{L^2(\mathcal{E}_h)}$ in terms of $\|v_h\|_\ast$. Such a result is derived with the help of the following lemma.

\begin{lemma}\label{lem_edge_est}
Assume the outer tetrahedra mesh size satisfies $h\le h_0$,  with some sufficiently small $h_0\simeq 1$,
depending only on the constant $c_2$ from \eqref{e:3.10}. The following holds:
\begin{equation} \label{edge_est}
 \sum_{E\in \Eh}\int_{E}v_h^2 \, \ds  \lesssim  h^{-1} \|v_h\|^2_{L^2(\Gamma_h)}+
  h \|\nabla_{\Gamma_h} v_h\|^2_{L^2(\Gamma_h)} \quad \text{for all}~~v_h \in V_h^\Gamma
\end{equation}
\end{lemma}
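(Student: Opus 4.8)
The plan is to prove a trace inequality that bounds $\|v_h\|_{L^2(E)}$ over each edge by surface norms over a \emph{neighbourhood} of $E$ of diameter $\simeq h$, and then to sum over $E\in\Eh$ using bounded overlap. The naive route---applying a trace inequality on the single triangle $T$ having $E$ as an edge---cannot work, because $\Fh$ is not regular: a thin sliver $T$ carrying a nearly constant $v_h$ has $\|v_h\|_{L^2(E)}^2\simeq |E|\,|v_h|^2$ on its long edge while $\|v_h\|_{L^2(T)}^2\simeq |T|\,|v_h|^2$ with $|T|\ll h|E|$, so no shape-independent per-element estimate of the desired form holds. The volume route used in Lemma~\ref{prop:3.3} (pass from $E$ to a face $W\subset\partial S_T$ via Hansbo's lemma, then to $S_T$ via the standard trace inequality) is also not available here: it would only reduce the claim to $\sum_E\|v_h\|_{L^2(E)}^2\lesssim h^{-2}\|v_h\|_{L^2(\omega_h)}^2+\|\nabla v_h\|_{L^2(\omega_h)}^2$, and for a genuine finite element function $v_h$ the volume norm $\|v_h\|_{L^2(\omega_h)}$ cannot be bounded by $\|v_h\|_{L^2(\Gamma_h)}$, since the reverse trace inequality fails for small cuts.

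The working idea is therefore to aggregate neighbouring elements. For fixed $E$, let $S_T\in\Th$ satisfy $E\subset\partial S_T$ and let $M_E$ be the macro-patch consisting of $S_T$ together with all tetrahedra of $\Th$ sharing a vertex with it, so that $\operatorname{diam}(M_E)\simeq h$ and, by regularity of $\Th$, each tetrahedron lies in only $O(1)$ patches. Set $\omega_E:=\Gamma_h\cap M_E$. The key point is that, although the triangulation of $\omega_E$ may contain slivers, $\omega_E$ itself is a nondegenerate piece of surface of area $\simeq h^2$ that contains a neighbourhood of $E$ of width $\simeq h$. I would make this precise using \eqref{e:3.10} together with the hypothesis $h\le h_0$: these guarantee that on $M_E$ the normal $\bfn_{\Gamma_h}$ deviates from $\bfn$ by $O(h)$, so $\Gamma_h\cap M_E$ is the graph of an $O(h)$-Lipschitz function over (a nondegenerate subdomain of) the tangent plane of $\Gamma$ at the projection of the centre of $S_T$. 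Under this graph map, which has Jacobian $1+O(h)$ and distorts $\nath$ by a factor $1+O(h)$, the segment $E$ corresponds to a chord $\hat E$ of a planar domain $\hat\omega_E$ of size $\simeq h$.

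On the flat configuration the estimate is classical: for the $H^1$ function $\hat v_h$ (well defined because $v_h$ is continuous on $\Gamma_h$, hence in $H^1(\omega_E)$ despite the kinks across interior edges) and a chord $\hat E$ of a domain of diameter $\simeq h$ that contains an $h$-thick collar of $\hat E$, one has $\|\hat v_h\|_{L^2(\hat E)}^2\lesssim h^{-1}\|\hat v_h\|_{L^2(\hat\omega_E)}^2 + h\,\|\hat\nabla \hat v_h\|_{L^2(\hat\omega_E)}^2$. Transferring back through the graph map (with constants perturbed by $1+O(h)$, hence controlled for $h\le h_0$) yields
\[
 \int_E v_h^2\,\ds \lesssim h^{-1}\int_{\omega_E} v_h^2\,\ds + h\int_{\omega_E}|\nath v_h|^2\,\ds .
\]
Summing over $E\in\Eh$ and invoking the bounded overlap of the patches $\{\omega_E\}$ then gives \eqref{edge_est}.

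The step I expect to be the main obstacle is the local trace inequality on $\omega_E$ with a constant independent of $h$ and of the (possibly degenerate) shapes of the triangles in $\omega_E$. The difficulty is entirely geometric: one must show that $\omega_E$ really is an $O(h)$-thick nondegenerate neighbourhood of $E$ in $\Gamma_h$---so that a sliver abutting $E$ is always compensated by genuinely two-dimensional neighbouring elements---and this is precisely where the near-flatness encoded in \eqref{e:3.10} and the smallness assumption $h\le h_0(c_2)$ enter. Controlling the interior kinks of the piecewise planar graph, so that the planar trace inequality survives the change of variables, is the accompanying technical point.
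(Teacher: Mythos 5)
Your proposal follows essentially the same route as the paper's proof: the paper likewise aggregates the surface patch $\omega(S_T)=\widetilde{\omega}(S_T)\cap\Gamma_h$ over all tetrahedra touching $S_T$, flattens it by a bi-Lipschitz map $\phi$ onto the plane containing $T$ (so that $\phi(E)=E$), uses shape regularity of the outer mesh to obtain $h\lesssim\operatorname{dist}(E,\partial\,\omega(S_T))$ and hence a rectangle $Q$ of width $\simeq h$ with side $E$ inside the projected patch, applies the scaled planar trace inequality on $Q$, and sums using bounded overlap of the patches. The geometric obstacle you single out is resolved exactly as you anticipate: \eqref{e:3.10} with $h\le h_0(c_2)$ gives $\sin\alpha\gtrsim1$ for the angle between the projection plane and $\bfn_{\Gamma_h}$, so $\phi$ and $\phi^{-1}$ are uniformly Lipschitz regardless of the sliver shapes in $\mathcal{F}_h$.
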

\begin{proof}
Let $E\in \Eh$  be an edge  of a triangle $T\in\mathcal{F}_h$ and $S_T\in\mathcal{T}_h$ is the corresponding tetrahedron of the outer triangulation. Consider the patch $\widetilde{\omega}(S_T)$ of all $S\in\mathcal{T}_h$
touching $S_T$. Denote  $\omega(S_T)=\widetilde{\omega}(S_T)\cap\Gamma_h$. Let $v_h$ be an arbitrary fixed function
from $ V_h^\Gamma$.
We shall prove the bound
\begin{equation}\label{local_est}
\int_{E}v_h^2 \, \ds  \lesssim  h^{-1} \|v_h\|^2_{L^2(\omega(S_T))}+
  h \|\nabla_{\Gamma_h} v_h\|^2_{L^2(\omega(S_T))}.
\end{equation}
Then summing over all $E\in \Eh$ and using that $\omega(S_T)$ consists of a uniformly bounded number of
tetrahedra (due to the regularity of the outer mesh), we obtain  \eqref{edge_est}.
\smallskip

Let $\mathbb{P}$ be a plane containing $T$. We can define for sufficiently small $h$ an injective  mapping $\phi:\omega(S_T)\to\mathbb{P}$  such that $|\nabla\phi|\lesssim1$ and  $|\nabla(\phi^{-1})|\lesssim1$.  For example, $\phi$ can be build by the orthogonal projection
on $\mathbb{P}$. Then $|\nabla\phi|\lesssim 1$ and $|\nabla(\phi^{-1})(\bx)|\lesssim (\sin\alpha)^{-1}$,
where $\alpha$ is the angle between $\mathbb{P}$ and $\bn_h(\phi^{-1}(\bx))$. Due to assumption \eqref{e:3.10} we have $1\lesssim \sin\alpha$ for sufficiently small $h$.   If $\phi$ is the orthogonal projection on $\mathbb{P}$, then $\phi(E)=E$.  Thus we get
\begin{equation}\label{equiv}
\left\{
\begin{split}
\int_{E}v_h^2\ds&=\int_{\phi(E)}(v_h\circ\phi^{-1})^2\ds,\\
 \|v_h\|_{L^2(\omega(S_T))}&\simeq\|v_h\circ\phi^{-1}\|_{L^2(\phi(\omega(S_T)))},\\
 \|\nabla_{\Gamma_h} v_h\|_{L^2(\omega(S_T))}&\simeq\|\nabla_{\mathbb{P}}( v_h\circ\phi^{-1})\|_{L^2(\phi(\omega(S_T)))}.
 \end{split}\right.
\end{equation}

Due to the shape regularity of $S\in\widetilde{\omega}(S_T)$ we have
\[h\lesssim\text{dist}(E,\partial\,\widetilde{\omega}(S_T))\le\text{dist}(E,\partial\,\omega(S_T)).\]
Hence, from $|\nabla\phi^{-1}|\lesssim1$ it follows that
$h\lesssim \text{dist}(\phi(E),\partial\,\phi(\omega(S_T)))$. Thus, we may consider a rectangle $Q\subset\phi(\omega(S_T))$
such that $E=\phi(E)$ is a side of $Q$ and $|Q|\simeq h|E|$. By the standard trace theorem and scaling argument we
get
\[
\int_{\phi(E)}(v_h\circ\phi^{-1})^2\ds \lesssim h^{-1}\|v_h\circ\phi^{-1}\|^2_{L^2(Q)}+ h\|\nabla_{\mathbb{P}} (v_h\circ\phi^{-1})\|^2_{L^2(Q)}.
\]
This together with \eqref{equiv} and $Q\subset\phi(\omega(S_T))$ implies \eqref{local_est}.
\end{proof}

An immediate consequence of the lemma and \eqref{basic} is the following corollary.

\begin{corollary}\lbl{lem:3.2}
The following estimate holds:
\begin{equation}\label{aux1}
h\sum_{E\in \Eh}\int_{E}v_h^2\, \ds  \lesssim \big( \frac{1}{\eps +c} + \frac{h^2}{\eps}\big) \|v_h\|_\ast^2 \quad \text{for all}~~v_h \in V_h^\Gamma.
\end{equation}
\end{corollary}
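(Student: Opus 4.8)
The plan is to obtain the estimate directly by assembling Lemma~\ref{lem_edge_est}, the bound \eqref{basic}, and the definition \eqref{defn} of the norm $\|\cdot\|_\ast$; no new geometric input is needed. First I would multiply the inequality of Lemma~\ref{lem_edge_est} by $h$, which yields
\[
 h\sum_{E\in \Eh}\int_{E}v_h^2 \, \ds  \lesssim  \|v_h\|^2_{L^2(\Gamma_h)}+ h^2 \|\nath v_h\|^2_{L^2(\Gamma_h)}
\]
for all $v_h\in V_h^\Gamma$. It then remains only to control each of the two terms on the right-hand side by $\|v_h\|_\ast^2$ with the claimed parameter-dependent weights.

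For the first term I would invoke \eqref{basic}, which gives $\|v_h\|^2_{L^2(\Gamma_h)} \lesssim (\eps+c)^{-1}\|v_h\|_\ast^2$; here it matters that we work within the parameter set \eqref{pardomain}, since \eqref{basic} was derived under exactly that restriction (and, for the case $c=0$, relies on the discrete Friedrich's inequality \eqref{FrB}). For the second term I would simply read off from the definition \eqref{defn} that $\eps\|\nath v_h\|_{L^2(\Gamma_h)}^2 \le \|v_h\|_\ast^2$, the advection and reaction contributions in \eqref{defn} being nonnegative, so that $h^2\|\nath v_h\|^2_{L^2(\Gamma_h)} \le \tfrac{h^2}{\eps}\|v_h\|_\ast^2$.

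Adding the two bounds produces the asserted inequality
\[
 h\sum_{E\in \Eh}\int_{E}v_h^2 \, \ds \lesssim \Big(\frac{1}{\eps+c}+\frac{h^2}{\eps}\Big)\|v_h\|_\ast^2 .
\]
I do not anticipate any genuine obstacle: the corollary is purely a matter of combining the two cited facts, and the only point requiring attention is that the hypotheses under which \eqref{basic} holds (the parameter restriction \eqref{pardomain} together with the assumed discrete Friedrich inequality \eqref{FrB}) remain in force, so that the weight $(\eps+c)^{-1}$ on the $L^2$ term is legitimate for both $c=0$ and $c>0$.
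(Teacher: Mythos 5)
Your proposal is correct and matches the paper's reasoning: the paper states the corollary as an immediate consequence of Lemma~\ref{lem_edge_est} and \eqref{basic}, which is precisely the combination you carry out (multiply the edge estimate by $h$, bound the $L^2$ term via \eqref{basic} and the gradient term via the $\eps$-weighted part of the $\|\cdot\|_\ast$ norm). Your remark about the parameter restriction \eqref{pardomain} being needed for \eqref{basic} is also consistent with the paper's setup.
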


We are  now in position to prove a  continuity result for the surface finite element bilinear form.

\begin{lemma} \label{thm2}
 For any $u\in H^2(\Gamma)$ and $v_h\in V_h^{\Gamma}$, we have
\begin{equation}
 |a_h(u^e-(I_h u^e)|_{\Gamma_h},v_h)|\lesssim \left(\eps^{1/2}+h^{1/2}+c^{\frac12}h+\frac{h^2}{\sqrt{\eps+c}}+ \frac{h^3}{\sqrt{\eps}}\right)h
 \|u\|_{H^2(\Gamma)} \|v_h \|_{\ast}.
\end{equation}
\end{lemma}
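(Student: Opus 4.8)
Writing $\varphi := u^e-(I_h u^e)|_{\Gamma_h}$, my plan is to expand $a_h(\varphi,v_h)$ as in \eqref{eqah} into its diffusion, reaction, skew-symmetric convection, and SUPG-stabilization parts, estimate each separately, and verify that every piece is dominated by one of the five terms on the right-hand side. The diffusion and reaction parts are immediate. By Cauchy--Schwarz and \eqref{e:3.19}, $\eps\int_{\Gamma_h}\nath\varphi\cdot\nath v_h\,\ds\le\eps^{1/2}\|\nath\varphi\|_{L^2(\Gamma_h)}\cdot\eps^{1/2}\|\nath v_h\|_{L^2(\Gamma_h)}\lesssim\eps^{1/2}h\|u\|_{H^2(\Gamma)}\|v_h\|_{\ast}$, and by \eqref{e:3.18}, $\int_{\Gamma_h}c\,\varphi v_h\,\ds\le c^{1/2}\|\varphi\|_{L^2(\Gamma_h)}\|v_h\|_{\ast}\lesssim c^{1/2}h^2\|u\|_{H^2(\Gamma)}\|v_h\|_{\ast}$. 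These yield the $\eps^{1/2}h$ and $c^{1/2}h^2$ target terms.

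For the stabilization part I would apply the $\delta$-weighted Cauchy--Schwarz, so that the factor $\big(\sum_{T}\delta_T\|\bw^e\cdot\nath v_h\|_{L^2(T)}^2\big)^{1/2}\le\|v_h\|_{\ast}$ splits off and it remains to bound $\big(\sum_T\delta_T\|-\eps\Delta_{\Gamma_h}\varphi+\bw^e\cdot\nath\varphi+c\varphi\|_{L^2(T)}^2\big)^{1/2}$. Since $I_hu^e$ is affine on each planar $T$, one has $\Delta_{\Gamma_h}\varphi=\Delta_{\Gamma_h}u^e$ there; a case distinction on $\mathsf{Pe}_T$ in \eqref{e:2.10} gives $\delta_T\eps^2\lesssim h^2\eps$, which together with $\|\Delta_{\Gamma_h}u^e\|_{L^2(\Gamma_h)}\lesssim\|u\|_{H^2(\Gamma)}$ (from the pointwise bound \eqref{e:3.13} and the change of variables \eqref{e:3.16}) controls the diffusive residual. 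The convective residual is handled by the already established $\int_{\Gamma_h}\delta|\bw^e\cdot\nath\varphi|^2\,\ds\lesssim h^3\|u\|_{H^2(\Gamma)}^2$ of \eqref{aux3}, and the reactive residual by the cap $c\,\delta_T\le1$ (so $c^2\delta_T\le c$), yielding $c^{1/2}h^2\|u\|_{H^2(\Gamma)}$. Thus this part stays within the $\eps^{1/2}h+h^{3/2}+c^{1/2}h^2$ terms.

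The essential difficulty is the skew-symmetric pair $\tfrac12\big[\int_{\Gamma_h}(\bw^e\cdot\nath\varphi)v_h\,\ds-\int_{\Gamma_h}(\bw^e\cdot\nath v_h)\varphi\,\ds\big]$: a direct estimate of the first integral gives only $\tfrac{h}{\sqrt{\eps+c}}\|u\|_{H^2(\Gamma)}\|v_h\|_{\ast}$, which is two powers of $h$ short, so the skew structure must be used. I would integrate by parts over each planar triangle $T$ (using $\bw^e\cdot\nath=\bfP_h\bw^e\cdot\nath$ and $\divth(\bfP_h\bw^e)=\divth\bw^e$, valid since $\bfP_h$ is constant on $T$). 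This rewrites the pair as $-\tfrac12\int_{\Gamma_h}\varphi v_h\,\divth\bw^e\,\ds-\int_{\Gamma_h}\varphi(\bw^e\cdot\nath v_h)\,\ds+\tfrac12\sum_{E\in\Eh}\int_E\varphi v_h\,(\bw^e\cdot[\mbf m_h])\,\ds$, the boundary contributions combining into the edge sum because $\bfn_h\perp\mbf m_h$ on each triangle, $\varphi$ and $v_h$ are continuous across edges, and $\Gamma_h$ has no boundary. The divergence term is bounded via Lemma~\ref{lem_div} ($\|\divth\bw^e\|_{L^\infty(\Gamma_h)}\lesssim h$), \eqref{e:3.18} and \eqref{basic}, giving $\tfrac{h^3}{\sqrt{\eps+c}}$. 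The remaining volume term is split as $\delta^{-1/2}\varphi\cdot\delta^{1/2}(\bw^e\cdot\nath v_h)$; from $\delta_T^{-1}=\max(\widetilde{\delta_T}^{-1},c)\lesssim h^{-1}+\eps h^{-2}+c$ one obtains $\|\delta^{-1/2}\varphi\|_{L^2(\Gamma_h)}\lesssim(h^{3/2}+\eps^{1/2}h+c^{1/2}h^2)\|u\|_{H^2(\Gamma)}$, so this term reuses the first three target terms.

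The main obstacle, and the origin of the imperfect robustness, is the edge/jump sum. I would bound $|\bw^e\cdot[\mbf m_h]|=|\bw^e\cdot\bfP[\mbf m_h]|\lesssim h^2$, using that $\bw^e(\bfx)=\wt(\bfp(\bfx))$ is tangential to $\Gamma$ together with the geometric estimate $|\bfP[\mbf m_h]|\lesssim h^2$ of Lemma~\ref{lem:jumpvel}; then Cauchy--Schwarz over the edges, the interpolation bound $\big(\sum_{E}\|\varphi\|_{L^2(E)}^2\big)^{1/2}\lesssim h^{3/2}\|u\|_{H^2(\Gamma)}$ of Lemma~\ref{prop:3.3}, and the trace bound $\big(\sum_{E}\|v_h\|_{L^2(E)}^2\big)^{1/2}\lesssim h^{-1/2}\big(\tfrac{1}{\sqrt{\eps+c}}+\tfrac{h}{\sqrt\eps}\big)\|v_h\|_{\ast}$ from Corollary~\ref{lem:3.2} combine to give exactly $\big(\tfrac{h^3}{\sqrt{\eps+c}}+\tfrac{h^4}{\sqrt\eps}\big)\|u\|_{H^2(\Gamma)}\|v_h\|_{\ast}$, the last two target terms. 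Collecting all contributions proves the estimate. The delicate points are the bookkeeping of the $\delta$-weights through the two P\'eclet cases and the cap at $c^{-1}$, and the jump term, where the $O(h^2)$ surface approximation error is felt and the mild $\eps^{-1/2}$ dependence enters.
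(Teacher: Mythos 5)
Your proof follows essentially the same route as the paper's: the same term-by-term expansion, the same elementwise integration by parts producing the divergence term and the edge-jump sum, and the same use of Lemma~\ref{lem_div}, Lemma~\ref{prop:3.3}, Lemma~\ref{lem:jumpvel} and Corollary~\ref{lem:3.2}, with identical bookkeeping of the $\delta_T$-weights. The only (harmless) deviation is your unified $\delta^{-1/2}\varphi\cdot\delta^{1/2}(\bw^e\cdot\nath v_h)$ splitting of the remaining volume term, which tacitly requires $\delta_T>0$ on every element; the paper instead treats the case $\mathsf{Pe}_T\le 1$ separately, using $\|\bw^e\|_{L^\infty(T)}\lesssim\eps/h$ there, and so never divides by $\delta_T$ and also covers $\delta_1=0$.
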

\begin{proof} Define $\phi=u^e-(I_h u^e)|_{{\Gamma_h}}$, then
\begin{equation} \label{aux9}
 \begin{split}
a_h(\phi,v_h) & = \eps\int_{\Gamma_h}\nath \phi\cdot\nath v_h \ds\\
 & + \int_{\Gamma_h} \frac12\big((\bw^e\cdot\nath \phi) v_h-(\bw^e\cdot\nath v_h) \phi\big) +c\,\phi v_h \,\ds \\
 & + \sum_{T\in\Fh}  \delta_T \int_T \left(-\eps\Delta_{\Gamma_h}\phi
 +  \bw^e\cdot\nath \phi + c\phi \right)\bw^e\cdot\nath v_h \, \ds.
\end{split}
\end{equation}
 We estimate  $a_h(\phi,v_h)$ term by term.
Due to \eqref{e:3.18}, \eqref{e:3.19}, we get for the first term on  the righthand side of \eqref{aux9}:
\begin{equation}\label{aux8}
\Big|\eps\int_{\Gamma_h}\nath \phi\cdot\nath v_h \ds\Big|\lesssim \eps h\|u\|_{H^2(\Gamma)}\|\nath v_h\|_{L^2(\Gamma_h)}
\le \eps^{\frac12}h\|u\|_{H^2(\Gamma)}\|v_h \|_{\ast}.
\end{equation}
To the second term on the righthand side of \eqref{aux9} we apply integration  by parts:
\begin{equation}\label{aux10}
 \begin{split} &   \int_{\Gamma_h}\frac12\big((\bw^e\cdot\nath \phi) v_h-(\bw^e\cdot\nath v_h) \phi\big) + c \phi v_h \,\ds \\
 & =\int_{\Gamma_h}c \phi v_h\ds-\int_{\Gamma_h}(\bw^e\cdot\nath v_h) \phi\ds \\
& + \frac12 \sum_{T\in\Fh}\int_{\pa T}(\bw^e\cdot \mathbf{m}_h)\phi v_h\ds -\frac12\int_{\Gamma_h}(\divth\bw^e) \phi v_h\ds \\
 & =:I_1+I_2+I_3+I_4.
\end{split}
\end{equation}
The term $I_1$ can be estimated by
\begin{equation*}
|I_1|\lesssim c^{\frac12}\|\phi \|_{L^2(\Gamma_h)}\|\sqrt{c}v_h\|_{L^2(\Gamma_h)} \lesssim h^2c^{\frac12}\|u \|_{H^2(\Gamma)}\|v_h\|_{\ast}.
\end{equation*}
To estimate $I_2$, we consider the advection-dominated case and the diffusion-dominated case separately. If $\mathsf{Pe}_T>1$, we have
\begin{equation*}
 \begin{aligned}
\int_{T} (\bw^e\cdot\nath v_h) \phi\ds&\lesssim   \delta_{T}^{-1/2}\|\phi\|_{L^2(T)}\left(\int_{T}\delta_T  (\bw^e\cdot\nath v_h)^2\ds\right)^{1/2}
\\
&\lesssim { \max(h^{-1/2},c^{1/2})}\|\phi\|_{L^2(T)}\left(\int_{T}\delta_T  (\bw^e\cdot\nath v_h)^2\ds\right)^{1/2},
 \end{aligned}
\end{equation*}
and if $\mathsf{Pe}_T\leq 1$:
\begin{equation*}
 \begin{aligned}
\int_{T} (\bw^e\cdot\nath v_h) \phi\ds&\lesssim  \|\bw^e\|_{L^{\infty}(T)} \|\nath v_h\|_{L^2(T)}\|\phi\|_{L^2(T)}
\\
&\lesssim \eps^{1/2} h^{-1} \|\eps^{1/2}\nath v_h\|_{L^2(T)}\|\phi\|_{L^2(T)} .
 \end{aligned}
\end{equation*}
 Summing over $T\in\Fh$  we obtain
\begin{equation*}
 |I_2|\lesssim (h^{-1/2}+\eps^{1/2}h^{-1}) \|\phi\|_{L^2(\Gamma_h)} \|v_h\|_{\ast}\lesssim h(c^{1/2}h+h^{1/2}+\eps^{1/2}) \|u\|_{H^2(\Gamma)} \|v_h\|_{\ast}.
\end{equation*}
The term $I_3$ is estimated using $\mathbf P \bw^e=\bw^e$, Lemmas~\ref{prop:3.3}, \ref{lem:jumpvel}, and Corollary~\ref{lem:3.2}:
\begin{equation}\label{aux22}
 \begin{aligned}
|I_3| &\lesssim \Big|\sum_{E\in\Eh}\int_{E}( \bw^e\cdot[\mbf{m}_h])\phi v_h\ds\Big| \\
&\lesssim  \left(\sum_{E\in\Eh}\int_{E}|\phi|^2\ds\right)^{1/2}
\left(\sum_{E\in\Eh}\int_{E} |\mathbf{P} [\mbf{m}_h] |^2 v_h^2 \, \ds\right)^{1/2}\\
&\lesssim h^{3}\|u\|_{H^2(\Gamma)}\left( h \sum_{E\in\Eh}\int_{E}v_h^2 \, \ds\right)^{1/2} \lesssim \left( \frac{h^3}{\sqrt{\eps +c}} +\frac{h^4}{\sqrt{\eps}}\right) \|u\|_{H^2(\Gamma)}\|v_h\|_\ast.
 \end{aligned}
\end{equation}
The term $I_4$ in \eqref{aux10} can be bounded due to Lemma~\ref{lem_div}, the interpolation
bounds and \eqref{basic}:
\[
\begin{aligned}
|I_4|&\le \frac12\|\divth\bw^e\|_{L^\infty(\Gamma_h)}\|\phi\|_{L^2(\Gamma_h)} \|v_h\|_{L^2(\Gamma_h)}\lesssim h^{3}\|u\|_{H^2(\Gamma)} \|v_h\|_{L^2(\Gamma_h)}\\&\le \frac{h^3}{\sqrt{\eps+c}} \|u\|_{H^2(\Gamma)} \|v_h\|_{\ast}.
\end{aligned}
\]
Finally we treat the third term on the righthand side of  \eqref{aux9}. Using  $\delta_T \|\bw^e\|_{L^\infty(T)} \lesssim h$, $\delta_T \eps \lesssim 1$,   $\delta_T c \leq 1$ and
the interpolation estimates \eqref{e:3.18} and \eqref{e:3.19} we obtain:
\begin{equation}\label{aux11}
 \begin{aligned}
  &\sum_{T\in\Fh}  \delta_T \int_T (-\eps\Delta_{\Gamma_h}\phi
 + \bw^e\cdot\nath \phi + c \phi ) \bw^e\cdot\nath v_h \, \ds \\
\lesssim &\left(\sum_{T\in\Fh} \delta_{T}\big(\eps^2 \|\Delta_{\Gamma_h}u^e\|^2_{L^2(T)}+\|\bw^e\cdot\nath\phi\|^2_{L^2(T)}+c^2\|\phi\|^2_{L^2(T)}\big)\right)^{1/2}\|v_h\|_{\ast}\\
\lesssim & ~ (\eps^{1/2}+h^{1/2}+c^{\frac12}h)h\|u\|_{H^2(\Gamma)}\|v_h\|_{\ast}.
 \end{aligned}
\end{equation}
Combing  the inequalities \eqref{aux8}--\eqref{aux11} proves the result of the lemma.
\end{proof}

\subsection{Consistency estimate} \label{sectconsistency}
The consistency error of the finite element method \eqref{e:2.8} is due to geometric errors resulting
from the approximation of $\Gamma$ by $\Gamma_h$. To estimate this geometric errors we need  a few additional
definitions and results, which can be found in, for example, \cite{Demlow06}. For $\bfx\in \Gamma_h$ define $\tilde{\bfP}_{h}(\bfx)= \mathbf{I}-\bfn_h(\bfx)\bfn(\bfx)^T/(\bfn_h(\bfx)\cdot\bfn(\bfx))$.
 One can represent the surface gradient of $u\in H^1(\Gamma)$ in terms of $\nath u^e$ as follows
\begin{equation} \label{hhl}
 \nat u(\bfp(\bfx))=(\mathbf{I}-d(\bfx)\mathbf{H}(\bfx))^{-1} \tilde{\bfP}_{h}(\bfx) \nath u^e(\bfx)~~ \hbox{ a.e. }\bfx\in \Gamma_h.
\end{equation}
Due to \eqref{e:3.16}, we get
\begin{equation}\label{aux13}
 \int_{\Gamma}\nat u\nat v\, \ds=\int_{\Gamma_h} \mathbf{A}_h\nath u^e\nath v^e \, \ds \quad \hbox{for all } v\in H^1(\Gamma),
\end{equation}
with $\mathbf{A}_h(\bfx)=\mu_h(\bfx) \tilde{\bfP}^T_h(\bfx)(\mathbf{I}-d(\bfx)\mathbf{H}(\bfx))^{-2}\tilde{\bfP}_h(\bfx)$.
 From $\bw \cdot \bn =0$ on $\Gamma$ and $\bw^e(\bx)=\bw(\bfp(\bfx))$, $\bn(\bx) =\bn(\bfp(\bfx))$ it follows that $\bn(\bx) \cdot \bw^e(\bx)=0$ and thus $\bw(\bfp(\bfx))= \tilde \bfP_h(\bx)\mathbf{w}^e(\bfx)$ holds. Using this, we get the relation
\begin{equation}\label{aux14}
 \int_{\Gamma}(\bw\cdot \nat u) v \, \ds =  \int_{\Gamma_h} (\mathbf{B}_h \mathbf{w}^e \cdot \nath u^e)v^e\, \ds,
\end{equation}
with $\mathbf{B}_h=\mu_h(\bfx)\tilde{\bfP}_h^T(I-d\mathbf{H})^{-1}\tilde\bfP_h$.
In the proof we use the lifting procedure $\Gamma_h \to \Gamma$  given by
\begin{equation} \label{deflift}
 v_h^l(\bfp(\bfx)): =v_h(\bfx) \quad \text{for}~~ \bfx\in\Gamma_h.
\end{equation}
It is easy to see that $v_h^l\in H^1(\Gamma)$.

The following lemma estimates the consistency error of the finite element method \eqref{e:2.8}.

\begin{lemma} \label{thm3}
 Let $u\in H^2(\Gamma)$ be the solution of \eqref{e:2.5}, then we have
\begin{equation}
 \sup_{v_h\in V_h^{\Gamma}}\frac{|f_h(v_h)-a_h(u^e,v_h)|}{\|v_h\|_{\ast}} \lesssim \big(h^{\frac12}+c^{\frac12}h+\frac{h}{\sqrt{c+\eps}}\big)\,h(\|u\|_{H^2(\Gamma)}+\|f\|_{L^2(\Gamma)}).
\end{equation}
\end{lemma}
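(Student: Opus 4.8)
The plan is to turn the quantity into a sum of purely \emph{geometric} consistency errors by using the exact weak identity on $\Gamma$. First I would note that the lift $v_h^l\in H^1(\Gamma)$ is an admissible test function: for $c>0$ this is immediate, and for $c=0$ one subtracts its mean, which is harmless since both $a(u,\cdot)$ and $f(\cdot)$ annihilate constants. Hence $a(u,v_h^l)=f(v_h^l)$, and I split $f_h(v_h)-a_h(u^e,v_h)=[f_h(v_h)-f(v_h^l)]+[a(u,v_h^l)-a_h(u^e,v_h)]$ into a right-hand-side part and a bilinear-form part.

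Next I would transport every $\Gamma$-integral onto $\Gamma_h$ through \eqref{e:3.16}, \eqref{aux13} and \eqref{aux14}, using $(v_h^l)^e=v_h$. This replaces the ideal weights $1,\bfP_h,\mathbf I$ by $\mu_h,\mathbf A_h,\mathbf B_h$, whose deviations are $O(h^2)$: one has $1-\mu_h\lesssim h^2$ by \eqref{e:3.17}, $\|\mathbf A_h-\bfP_h\|\lesssim h^2$, and — crucially using $\bw\cdot\bn=0$, which forces $\tilde\bfP_h\bw^e=\bw^e$ and cancels the otherwise $O(h)$ contribution — the contraction of $\mathbf I-\mathbf B_h$ with $\bw^e$ and any $\Gamma_h$-tangential vector is $O(h^2)$. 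The diffusion and reaction geometric errors then fall out as $\eps^{1/2}h^2\|u\|_{H^2}\|v_h\|_\ast$ and $c^{1/2}h^2\|u\|_{H^2}\|v_h\|_\ast$, while the mass part of $f_h-f(v_h^l)$ equals $\int_{\Gamma_h}(1-\mu_h)f^e v_h\lesssim h^2\|f\|_{L^2(\Gamma)}\|v_h\|_{L^2(\Gamma_h)}\lesssim\frac{h^2}{\sqrt{\eps+c}}\|f\|_{L^2(\Gamma)}\|v_h\|_\ast$ by \eqref{basic}.

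The two SUPG terms then combine into $\sum_{T\in\Fh}\delta_T\int_T R^e\,(\bw^e\cdot\nath v_h)$ with the pure geometric residual $R^e=\eps\big(\Delta_{\Gamma_h}u^e-(\Delta_\Gamma u)^e\big)+\big((\bw\cdot\nat u)^e-\bw^e\cdot\nath u^e\big)$, since the strong residual of the exact solution vanishes on $\Gamma$. A Cauchy--Schwarz step against the stabilization part of $\|v_h\|_\ast$, together with $\delta_T\lesssim h$, $\delta_T\eps^2\lesssim\eps h^2$ (valid in both Peclet regimes), the pointwise bound $|(\bw\cdot\nat u)^e-\bw^e\cdot\nath u^e|\lesssim h^2|\nath u^e|$ and $\|\Delta_{\Gamma_h}u^e-(\Delta_\Gamma u)^e\|_{L^2(\Gamma_h)}\lesssim h\|u\|_{H^2}$ (both read off from \eqref{e:3.14n}, \eqref{e:3.13}), keeps this contribution well below the asserted order.

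The genuinely delicate part is the convection term. Keeping the skew-symmetric form on both sides — the continuous form is exactly skew because $\Div_\Gamma\bw=0$ on the closed $\Gamma$, so no divergence or edge terms arise at the continuous level — its geometric error is, up to sign, $J_1-J_2$ with $J_1=\frac12\int_{\Gamma_h}((\mathbf I-\mathbf B_h)\bw^e\cdot\nath u^e)v_h$ and $J_2=\frac12\int_{\Gamma_h}((\mathbf I-\mathbf B_h)\bw^e\cdot\nath v_h)u^e$. The term $J_1$ is harmless: $|J_1|\lesssim h^2\|\nath u^e\|_{L^2}\|v_h\|_{L^2}\lesssim\frac{h^2}{\sqrt{\eps+c}}\|u\|_{H^2}\|v_h\|_\ast$ by \eqref{basic}. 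In $J_2$ the gradient falls on the rough $v_h$, and estimating $\|\nath v_h\|_{L^2}$ by the diffusion part of the norm would cost a spurious $\eps^{-1/2}$. I would remove it by integrating $J_2$ by parts on $\Gamma_h$, moving the derivative onto the smooth $u^e$ and onto the geometric coefficient $\mathbf b:=\bfP_h(\mathbf I-\mathbf B_h)\bw^e$ (with $|\mathbf b|\lesssim h^2$); this produces a tangential-divergence volume term and edge-jump terms. Here the preparatory lemmas enter: Lemma~\ref{lem_div} controls $\divth\bw^e$, Lemma~\ref{lem:jumpvel} supplies $|\bfP[\mathbf{m}_h]|\lesssim h^2$, and Lemma~\ref{prop:3.3} with Corollary~\ref{lem:3.2} convert the edge sums back into $\|v_h\|_\ast$. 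Assembling the four groups and collecting the largest powers gives the claimed estimate. I expect this last bookkeeping — arranging the volume-divergence and edge-jump contributions of $J_2$ so that neither a power of $h$ nor a spurious $\eps^{-1/2}$ survives — to be the main technical obstacle, and it is precisely the ``special control'' of geometric errors anticipated in the introduction.
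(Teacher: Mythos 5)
Your overall architecture --- testing the continuous weak formulation with the lift $v_h^l$, transporting every $\Gamma$-integral to $\Gamma_h$ via \eqref{e:3.16}, \eqref{aux13}, \eqref{aux14}, exploiting the $O(h^2)$ bounds on $1-\mu_h$, $\mathbf{A}_h-\bfP_h$, $\mathbf{B}_h-\bfP_h$, and reducing the SUPG sum to a purely geometric residual --- is exactly the paper's, and your handling of the right-hand side, diffusion, reaction, $J_1$ and SUPG residual terms matches the paper's estimates for its $I_1,I_2,I_3,I_5,\varPi_1$. The divergence is precisely in the term you flag as delicate, $J_2=\tfrac12\int_{\Gamma_h}\big((\mathbf{B}_h-\bfP_h)\bw^e\cdot\nath v_h\big)u^e\,\ds$, and there your route has a genuine gap. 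Integration by parts on $\Gamma_h$ produces (i) a volume term $\int_{\Gamma_h}v_h\,u^e\,\divth\big(\bfP_h(\mathbf{B}_h-\bfP_h)\bw^e\big)\ds$ and (ii) edge jumps $\sum_{E\in\Eh}\int_E[\cdot]\,v_h u^e\,\ds$. For (i), the coefficient is $O(h^2)$ pointwise but its tangential divergence is generically only $O(h)$: $\mathbf{B}_h-\bfP_h$ is a sum of products of $O(h)$ factors (such as $\tilde{\bfP}_h-\bfP_h$ and $d\mathbf{H}$) whose tangential gradients are only $O(1)$, so differentiating loses a power of $h$; this gives $h\|u^e\|_{L^2}\|v_h\|_{L^2}\lesssim\frac{h}{\sqrt{\eps+c}}\|u\|\,\|v_h\|_\ast$, a full factor $h$ short of the target $\frac{h^2}{\sqrt{c+\eps}}$. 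For (ii), you invoke Lemma~\ref{prop:3.3}, but that lemma bounds the edge $L^2$ norm of the \emph{interpolation error} $u^e-I_hu^e$; here the edge factor is $u^e$ itself, for which the trace inequality only gives $\big(\sum_{E\in\Eh}\int_E(u^e)^2\ds\big)^{1/2}\lesssim h^{-1/2}\|u\|_{H^1(\Gamma)}$, and combined with Corollary~\ref{lem:3.2} one again lands at order $\frac{h}{\sqrt{\eps+c}}+\frac{h^2}{\sqrt{\eps}}$, which exceeds the claimed bound.

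The paper does not integrate $J_2$ by parts at all. It bounds it elementwise by $|\mathbf{B}_h-\bfP_h|\,\|\bw^e\|_{L^\infty(T)}\|\nath v_h\|_{L^2(T)}\|u^e\|_{L^2(T)}$ and pairs the gradient with the $\ast$-norm through $\min\{\eps^{-1/2}\|\bw^e\|_{L^\infty(T)},\delta_T^{-1/2}\}$: when $\mathsf{Pe}_T\le1$ one has $\eps^{-1/2}\|\bw^e\|_{L^\infty(T)}\lesssim h^{-1/2}$, and when $\mathsf{Pe}_T>1$ one has $\delta_T^{-1/2}\lesssim c^{1/2}+h^{-1/2}$, so the $O(h^2)$ geometric factor only pays $h^{-1/2}$ or $c^{1/2}$, producing exactly the $h^{3/2}+c^{1/2}h^2$ contributions visible in the statement. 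The ``spurious $\eps^{-1/2}$'' you feared never materializes because $\eps^{-1/2}$ is invoked only where the cell Peclet condition forces $\eps\gtrsim h\|\bw^e\|_{L^\infty(T)}$. Since $h^{1/2}\cdot h$ is in any case the dominant term of the asserted bound, the cruder direct estimate suffices and your more elaborate integration-by-parts machinery (which is what the paper uses in the \emph{continuity} Lemma~\ref{thm2}, where the interpolation error, not $u^e$, sits on the edges) is neither needed nor, as written, sufficient here.
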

\begin{proof}
The residual is decomposed as
\begin{equation} \label{er}
 f_h(v_h)-a_h(u^e,v_h)=f_h(v_h)-f(v_h^l)+a(u,v_h^l)-a_h(u^e,v_h).
\end{equation}
The following holds:
\begin{align*}
f(v^l_h)=&\,\int_{\Gamma}fv_h^l\, \ds= \int_{\Gamma_h}\mu_h f^e v_h\, \ds,\\
a(u,v^l_h)=&\,\eps \int_{\Gamma}\nabla_{\Gamma} u \nabla_{\Gamma} v_h^l\, \ds
+\int_{\Gamma}(\bw\cdot\nabla_{\Gamma} u) v_h^l\ds+\int_{\Gamma}cuv_h^l\, \ds\\
=&\,\eps \int_{\Gamma}\nabla_{\Gamma} u \nabla_{\Gamma} v_h^l\, \ds
+\frac12\int_{\Gamma}(\bw\cdot\nabla_{\Gamma} u) v_h^l-(\bw\cdot\nabla_{\Gamma} v_h^l) u\,\ds +\int_{\Gamma}cuv_h^l\, \ds\\
=&\, \eps \int_{\Gamma_h} \mathbf{A}_h\nabla_{\Gamma_h} u^e\nabla_{\Gamma_h} v_h\, \ds+ \frac12\int_{\Gamma_h} (\mathbf{B}_h \bw^e\cdot\nath u^e)v_h\, \ds\\
&\,-\frac12\int_{\Gamma_h}(\mathbf{B}_h \bw^e\cdot\nath v_h ) u^e\, \ds+\int_{\Gamma_h}\mu_h c u^ev_h\, \ds.
\end{align*}
Substituting these relations into \eqref{er}  and using \eqref{eqah}, \eqref{eqfh} results in
\begin{multline}\label{aux12}
  f_h(v_h)-a_h(u^e,v_h)=
  \int_{\Gamma_h}(1-\mu_h)f^ev_h\, \ds+\eps\int_{\Gamma_h}(\mathbf{A}_h-\bfP_h)\nabla_{\Gamma_h} u^e \cdot \nabla_{\Gamma_h}v_h\, \ds\\
+\frac12\int_{\Gamma_h}((\mathbf{B}_h-\bfP_h)\mathbf{w}^e\cdot\nath u^e) v_h\, \ds-\frac12\int_{\Gamma_h}((\mathbf{B}_h-\bfP_h)\mathbf{w}^e\cdot\nath v_h)  u^e\, \ds  \\ + \int_{\Gamma_h}(\mu_h-1)cu^e v_h\, \ds
+\sum_{T\in\Fh}\delta_T\int_T \big(f^e+\eps\Delta_{\Gamma_h}u^e-\mathbf{w}^e\cdot\nath u^e-cu^e \big) \bw^e\cdot\nath v_h \, \ds \\
=:I_1+I_2+I_3+I_4+I_5+\varPi_1.
\end{multline}
We estimate the $I_i$ terms separately.
Applying  \eqref{e:3.17} and \eqref{basic} we get
\begin{align}
I_1&\lesssim h^2 \|f^e\|_{L^2(\Gamma_h)}\|v_h\|_{L^2(\Gamma_h)}\lesssim \frac{h^2}{\sqrt{c +\eps }} \|f\|_{L^2(\Gamma)}\|v_h\|_{\ast},\label{aux15}\\
 I_5&\lesssim h^2 c^{\frac12}\|u^e\|_{L^2(\Gamma_h)}\|\sqrt{c}v_h\|_{L^2(\Gamma_h)}\lesssim h^2c^{\frac12} \|u\|_{L^2(\Gamma)}\|v_h\|_{\ast}.
\end{align}
One can show, cf. (3.43) in \cite{Reusken08}, the bound
\begin{equation*}
 |\bfP_h-\mathbf{A}_h|=|\bfP_h(\mathbf I-\mathbf A_h)|\lesssim h^2.
\end{equation*}
Using this  we obtain
\begin{align}
& I_2\lesssim \eps h^2 \|\nath u^e\|_{L^2(\Gamma_h)}\|\nath v_h\|_{L^2(\Gamma_h)} \lesssim \eps^{1/2}h^2 \|u^e\|_{H^2(\Gamma)}\|v_h\|_{\ast}.
\end{align}
Since $(\mathbf I-d\mathbf{H})^{-1}=\mathbf I+O(h^2)$, we also estimate
\begin{equation*}
 |\mathbf B_h-\bfP_h|\lesssim h^2+|\mathbf{A}_h-\bfP_h| \lesssim h^2.
\end{equation*}
This yields
\begin{equation}\label{aux16}
 I_3\lesssim  h^2 \|\nath u^e\|_{L^2(\Gamma_h)}\| v_h\|_{L^2(\Gamma_h)} \lesssim  \frac{h^2}{\sqrt{c+\eps }} \|u\|_{H^2(\Gamma)}\|v_h\|_{\ast}.
\end{equation}
To estimate $I_4$ we use  the definition \eqref{e:2.10} of $\delta_T$.
If $\displaystyle \mathsf{Pe}_T \leq 1$, then
$\eps^{-\frac12} \|\bw^e\|_{L^\infty(T)} \leq \sqrt{2} \|\bw^e\|_{L^\infty(T)}^{\frac12} h_{S_T}^{- \frac12}$ holds.
If  $\displaystyle \mathsf{Pe}_T > 1$, then $\delta_T^{- \frac12} \leq {\max(c^{\frac12}, \delta_0^{-\frac12} \|\bw^e\|_{L^\infty(T)}^{\frac12} h_{S_T}^{- \frac12})}$ holds.
 Using the assumption that the outer triangulation is quasi-uniform we get $ h_{S_T}^{-1} \lesssim h^{-1}$. Thus, we obtain
\[
\min\{\eps^{-\frac12}\|\bw^e\|_{L^\infty(T)},\delta_T^{-\frac12}\}\lesssim  { \max(c^{\frac12}, h^{-\frac12})\lesssim c^{\frac12}+ h^{-\frac12}},
\]
and
\begin{align}\label{aux40}
 I_4\lesssim&   \max_{x\in\Gamma_h}|\mathbf B_h-\bfP_h| \sum_{T\in\mathcal{F}_h}(\eps^{\frac12}\|\nath v_h\|_{L^2(T)} + \delta_T^{\frac12}\|\mathbf{w}^e_{\Gamma_h}\cdot\nath v_h\|_{L^2(T)})\notag\\
 &~~\times\min\{\eps^{-\frac12}\|\bw^e\|_{L^\infty(T)},\delta_T^{-\frac12}\}\|u^e\|_{L^2(T)}\notag\\
 \lesssim&  ~  {h(c^{\frac12}h+ h^{\frac12}) } \|v_h\|_{\ast}\|u\|_{L^2(\Gamma)}.
\end{align}
 Now we estimate $\varPi_1$.  Using the equation $-\eps \Delta_\Gamma u + \bw \cdot \nabla_\Gamma u + c u=f$ on $\Gamma$ we get
\begin{align}
\varPi_1&=\sum_{T\in\Fh} \delta_T \int_T \big( f\circ \bfp +\eps\Delta_{\Gamma_h}u^e - \bw^e\cdot\nath u^e -c u^e \big)\bw^e\cdot\nath v_h\, \ds \nonumber\\
&=\sum_{T\in\Fh}\delta_T \int_T \big(-\eps(\Delta_{\Gamma} u) \circ \bfp +\eps\Delta_{\Gamma_h}u^e \big) \bw^e\cdot\nath v_h \, \ds \nonumber\\
&\quad +\sum_{T\in\Fh}\delta_T \int_T \big((\bw^e\cdot \nat u) \circ \bfp -\bw^e\cdot\nath u^e\big)\bw^e\cdot\nath v_h \, \ds\nonumber\\
& \quad +\sum_{T\in\Fh}\delta_T \int_T \big(cu \circ \bfp - c u^e\big)\bw^e\cdot\nath v_h \, \ds \nonumber\\
&=:\varPi_1^1+\varPi_1^2+\varPi_1^3. \label{aux17}
\end{align}
 From $u^e = u\circ\bfp $ it follows  that $\varPi_1^3=0 $ holds. For $\varPi_1^2$ we obtain, using \eqref{hhl} and $|\mu_h^{-1} \mathbf B_h-\bfP_h| \leq |\mu_h -1| |\mu_h|^{-1} |\mathbf B_h| + |\mathbf B_h - \mathbf P_h| \lesssim h^2$,
\begin{align}
\varPi_1^2&=\sum_{T\in\Fh} \delta_T \int_T \big((\mu_h^{-1} \mathbf B_h-\bfP_h)\bw^e\cdot\nath u^e\big)\bw^e\cdot\nath v_h \, \ds \nonumber\\
& \lesssim h^2  \left(\sum_{T\in\Fh}\delta_T\|\bw^e\|_{L^\infty}^2 \| \nath u^e\|_{L^2(T)}^2\right)^{\frac{1}{2}}
\left(\sum_{T\in\Fh}\delta_T\|\bw^e \cdot \nath v_h\|_{L^2(T)}^2\right)^{\frac{1}{2}}\nonumber\\
&\lesssim h^{\frac{5}{2}}\|u\|_{H^2(\Gamma)}\|v_h\|_{\ast}. \label{aaa}
\end{align}
Since $\delta_T\eps\lesssim h^2$, we get
\begin{align}\label{aux19}\notag
 \varPi_2^1
 & \lesssim \left(\sum_{T\in\Fh}\eps^2\delta_T\| (\Delta_{\Gamma} u) \circ \bfp-\Delta_{\Gamma_h}u^e\|_{L^2(T)}^2\right)^{\frac{1}{2}}
\left(\sum_{T\in\Fh}\delta_T\|\bw^e\cdot\nath v_h\|_{L^2(T)}^2\right)^{\frac{1}{2}}\\
&\lesssim h\eps^{\frac12}\left(\sum_{T\in\Fh}\| (\Delta_{\Gamma} u) \circ \bfp-\Delta_{\Gamma_h}u^e\|_{L^2(T)}^2\right)^\frac{1}{2}\|v_h\|_{\ast}.
\end{align}
We finally consider the term between brackets in  \eqref{aux19}.
Using  the identity $\Div_{\Gamma} \mathbf f = \mathrm{tr} (\nat \mathbf f)$ and $\bn \cdot \nabla u^e( \bfp(\bfx))=0$ we obtain for $\bx \in T$, with $\nabla^2:= \nabla \nabla^T$.
\begin{equation}\label{Delta1}
\Delta_{\Gamma} u(\bfp(\bfx))=
\Div_{\Gamma} \nabla_\Gamma u(\bfp(\bfx))=\mathrm{tr}(\bfP\nabla \bfP\nabla u^e(\bfp(\bfx)))=
\mathrm{tr}(\bfP \nabla^2 u^e(\bfp(\bfx))\bfP).
\end{equation}
 From the same arguments it follows that
\begin{equation}\label{Delta2}
\Delta_{\Gamma_h} u^e(\bfx)=
\mathrm{tr}(\bfP_h\nabla \bfP_h \nabla u^e(\bfx))=
\mathrm{tr}(\bfP_h \nabla^2 u^e(\bfx)\bfP_h)
\end{equation}
holds.
Using \eqref{e:3.14n} and $|d(\bx)| \lesssim h^2$, $|\bfP- \bfP_h| \lesssim h$, $|\mathbf{H}| \lesssim 1$, $| \nabla\mathbf{H}| \lesssim 1$ we obtain
\begin{align*}
 \bfP_h \nabla^2 u^e(\bfx)\bfP_h  & =\bfP \nabla^2 u^e(\bfp(\bfx))\bfP +\mathbf{R}, \\
 |\mathbf{R}| & \lesssim  h \big(|\nabla^2 u^e(\bfp(\bfx)) | +|\nabla u^e(\bfp(\bfx)) | \big).
\end{align*}
Thus, using \eqref{Delta1} and \eqref{Delta2}, we get
\begin{align*}
 | \Delta_{\Gamma} u  (\bfp(\bx))-\Delta_{\Gamma_h}u^e(\bx)| & \leq \big| \mathrm{tr} (\bfP \nabla^2 u^e(\bfp(\bfx))\bfP-\bfP_h \nabla^2
 u^e(\bfx)\bfP_h) \big| \\
 & \lesssim  h \big(|\nabla^2 u^e(\bfp(\bfx)) | +|\nabla u^e(\bfp(\bfx)) | \big),
\end{align*}
and combining  this with  \eqref{aux19} yields
\[
\varPi_2^1\lesssim  h \eps^{\frac12}  \left(\sum_{T\in\Fh}\| (\Delta_{\Gamma} u) \circ \bfp -\Delta_{\Gamma_h}u^e\|_{L^2(T)}^2\right)^\frac{1}{2} \|v_h\|_\ast \lesssim\eps^{\frac12} h^2 \|u\|_{H^2(\Gamma)} \|v_h\|_\ast.
\]
Combining this with the results \eqref{aux12}-\eqref{aux40} and \eqref{aaa} proves the lemma.
\end{proof}

\subsection{Main theorem} \label{sectmain}

Now we put together  the results derived in the previous sections to prove the main result of the paper.
\begin{theorem}\label{Th1}
Let    Assumption~\ref{ass1} be satisfied. We consider problem parameters $\eps$ and $c$ as in \eqref{pardomain}. Assume that the solution $u$  of \eqref{e:2.5} has regularity $u\in H^2(\Gamma)$. Let $u_h$ be the discrete solution of the SUPG finite element method \eqref{e:2.8}.
Then the following holds:
\begin{equation}\label{errorEst}
\|u^e-u_h\|_{\ast}\lesssim h\big(h^{1/2}+\eps^{1/2}+c^{\frac12}h+\frac{h}{\sqrt{\eps+ c}}+ \frac{h^3}{\sqrt{\eps}}\big)  (\|u\|_{H^2(\Gamma)}+\|f\|_{L^2(\Gamma)}).
\end{equation}
\end{theorem}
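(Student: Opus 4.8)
The plan is to combine the four preparatory results that have already been established—coercivity (Lemma~\ref{thm0}), the interpolation error bound (Lemma in \eqref{e:3.20}), the continuity estimate (Lemma~\ref{thm2}), and the consistency estimate (Lemma~\ref{thm3})—via a standard Strang-type argument adapted to the surface setting. The key observation is that the discrete solution $u_h$ lives on $\Gamma_h$ while the exact solution $u$ lives on $\Gamma$, so throughout we measure everything against the extension $u^e$ and the interpolant $\ipue$ on $\Gamma_h$. First I would split the error by inserting the interpolant: writing
\begin{equation*}
 u^e-u_h = (u^e-\ipue) + (\ipue-u_h) =: \eta + e_h,
\end{equation*}
and then bound $\|u^e-u_h\|_\ast \le \|\eta\|_\ast + \|e_h\|_\ast$ by the triangle inequality. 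The first term $\|\eta\|_\ast$ is controlled directly by the interpolation estimate \eqref{e:3.20}, which already produces a contribution of the right order $h(\eps^{1/2}+h^{1/2}+c^{1/2}h)$.

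The main work is estimating the discrete error $e_h = \ipue - u_h \in V_h^\Gamma$, and this is where coercivity enters. Since $e_h$ is a finite element function, Lemma~\ref{thm0} gives $\tfrac12\|e_h\|_\ast^2 \le a_h(e_h,e_h)$. I would then rewrite $a_h(e_h,e_h)$ using the Galerkin-type structure of the method: exploiting $a_h(u_h,e_h)=f_h(e_h)$ from the discrete problem \eqref{e:2.8} and the decomposition $e_h = \ipue - u_h = (\ipue - u^e) + (u^e - u_h)$, one obtains
\begin{equation*}
 a_h(e_h,e_h) = a_h\big(\ipue - u^e,\,e_h\big) + \big(a_h(u^e,e_h) - f_h(e_h)\big).
\end{equation*}
The first term on the right is bounded by the continuity estimate of Lemma~\ref{thm2} applied with $v_h = e_h$ (note $\ipue - u^e = -(u^e - \ipue)$, so the sign is immaterial for the absolute-value bound), giving a factor $\|e_h\|_\ast$ times the collection of $h$-powers appearing in that lemma. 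The second term is precisely the consistency residual $-(f_h(e_h)-a_h(u^e,e_h))$, which Lemma~\ref{thm3} bounds by $\|e_h\|_\ast$ times its own collection of $h$-powers. Dividing through by $\|e_h\|_\ast$ then yields a bound on $\|e_h\|_\ast$ that is the sum of the continuity and consistency factors.

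The final step is purely bookkeeping: I would collect the three sources of error—interpolation, continuity, and consistency—and verify that their maximum (equivalently, their sum up to the implicit constant) reproduces exactly the combined factor
\begin{equation*}
 h\big(h^{1/2}+\eps^{1/2}+c^{1/2}h+\tfrac{h}{\sqrt{\eps+c}}+\tfrac{h^3}{\sqrt{\eps}}\big)
\end{equation*}
multiplying $(\|u\|_{H^2(\Gamma)}+\|f\|_{L^2(\Gamma)})$ in \eqref{errorEst}. One checks that every term from Lemmas~\ref{thm2} and \ref{thm3} is already present in, or dominated by, this expression: the continuity lemma supplies $\eps^{1/2}+h^{1/2}+c^{1/2}h+\tfrac{h^2}{\sqrt{\eps+c}}+\tfrac{h^3}{\sqrt{\eps}}$ and the consistency lemma supplies $h^{1/2}+c^{1/2}h+\tfrac{h}{\sqrt{c+\eps}}$, whose union matches the stated bound after absorbing the weaker $\tfrac{h^2}{\sqrt{\eps+c}}$ into $\tfrac{h}{\sqrt{\eps+c}}$. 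I do not expect any genuine obstacle here, since all the hard analysis—in particular the delicate control of the geometric and edge-jump errors—has been discharged in the preceding lemmas; the only point requiring mild care is confirming that the parameter restriction \eqref{pardomain} is the one under which \eqref{basic} (and hence Corollary~\ref{lem:3.2}, used inside Lemma~\ref{thm2}) is valid, so that the $\|e_h\|_{L^2(\Gamma_h)} \lesssim (\eps+c)^{-1/2}\|e_h\|_\ast$ step is legitimate throughout.
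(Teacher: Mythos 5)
Your proposal is correct and follows essentially the same route as the paper: triangle inequality through the interpolant $\ipue$, coercivity applied to $e_h=\ipue-u_h$, the identical splitting $a_h(e_h,e_h)=a_h(\ipue-u^e,e_h)+\bigl(a_h(u^e,e_h)-f_h(e_h)\bigr)$ handled by the continuity and consistency lemmas, and the same bookkeeping (including absorbing $h^2/\sqrt{\eps+c}$ into $h/\sqrt{\eps+c}$). Your closing remark about the parameter restriction \eqref{pardomain} being what legitimizes \eqref{basic} inside Lemmas~\ref{thm2} and \ref{thm3} is exactly the hypothesis the theorem imposes for that reason.
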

\begin{proof} The triangle inequality yields
\begin{equation}
\|u^e-u_h\|_{\ast}\leq \|u^e-(I_h u^e)|_{\Gamma_h}\|_{\ast} + \|(I_h u^e)|_{\Gamma_h}- u_h \|_{\ast}.
\end{equation}
The second term in the upper bound can be estimated using Lemmas~\ref{thm0},~\ref{thm2},~\ref{thm3}:
\begin{align*}
  \frac12\|&\ipue- u_h \|^2_\ast \le a_h(\ipue- u_h,\ipue- u_h )\nonumber \\
&= a_h(\ipue -u^e, \ipue- u_h)+a_h(u^e-u_h, \ipue-u_h)\nonumber \\
&\lesssim h \big(h^{1/2}+\eps^{1/2}+c^{\frac12}h +\frac{h^2}{\sqrt{\eps+c}}+ \frac{h^3}{\sqrt{\eps}}\big) \|u\|_{H^2(\Gamma)}\|\ipue-u_h\|_{\ast}\nonumber \\
&\qquad + |a_{h}(u^e,\ipue-u_h)-f_{h}(\ipue-u_h)|\nonumber \\
&\lesssim h \big(h^{1/2}+\eps^{1/2}+c^{\frac12}h +\frac{h}{\sqrt{\eps+c}}+ \frac{h^3}{\sqrt{\eps}}\big)\big(\|u\|_{H^2(\Gamma)}+\|f\|_{L^2(\Gamma)}\big)\|\ipue-u_h\|_{\ast}.\nonumber
 \end{align*}
This results in
\begin{equation}\label{aux24}
  \|\ipue- u_h \|_{\ast}\lesssim  h \big(h^{1/2}+\eps^{1/2}+c^{\frac12}h +\frac{h}{\sqrt{\eps+c}}+ \frac{h^3}{\sqrt{\eps}}\big)(\|u\|_{H^2(\Gamma)}+\|f\|_{L^2(\Gamma)}).
\end{equation}
The error estimate  \eqref{errorEst} follows from \eqref{e:3.20} and \eqref{aux24}.\\
\end{proof}

\subsection{Further discussion}\label{sec_disc}
We comment on some aspects related to the main theorem. Concerning the analysis we note that the norm $\|\cdot\|_\ast$, which measures the error
on the left-hand side of  \eqref{errorEst}, is the standard SUPG norm as found in standard analyses of
planar streamline-diffusion finite element methods. The analysis in this paper contains new ingredients compared to the planar case. To control the geometric errors (approximation of $\Gamma$ by $\Gamma_h$) we derived a consistency error bound in Lemma~\ref{thm3}. To derive a continuity result (Lemma~\ref{thm2}), as in the planar case, we apply partial integration to the term $\int_{\Gamma_h} (\bw^e \cdot \nabla_{\Gamma_h} \phi) v_h \, \ds$, cf. \eqref{aux10}. However, different from the planar case, this results in jumps across the edges $E \in \mathcal{E}_h$ which have to be controlled, cf.~\eqref{aux22}. For this the new results in the Lemmas~\ref{lem:jumpvel} and \ref{lem_edge_est} are derived. These jump terms across the edges cause the term $\frac{h^4}{\sqrt{\eps}}$  in the error bound  in  \eqref{errorEst}.

Consider  the error reduction factor $h^{3/2}+\eps^{1/2}h+c^{\frac12}h^2+\frac{h^2}{\sqrt{\eps+ c}}+ \frac{h^4}{\sqrt{\eps}}$ on the right-hand side of \eqref{errorEst}. The first three terms of it are
typical for the error analysis of planar SUPG finite element methods for $P1$ elements. In the standard literature for the planar case, cf.~\cite{TobiskaBook}, one typically only considers the case $c >0$. Our analysis also applies to the case $c=0$, cf.~\eqref{pardomain}. Furthermore, the estimates are uniform w.r.t. the size of the parameter $c$.  For a fixed $c>0$ and $\eps \lesssim h$  the first four terms can be estimated by $\lesssim h^{3/2}$, a bound similar to the standard one for the planar case. The only ``suboptimal'' term is the last one, which is caused by (our analysis of) the jump terms. Note, however, that $\frac{h^4}{\sqrt{\eps}} \lesssim h^{3/2}$  if $h^5 \lesssim \eps$ holds, which is a very mild condition.

The norm $\|\cdot\|_\ast$  provides a robust control of streamline derivatives of the solution. Cross-wind oscillations, however, are not completely suppressed. It is well known that nonlinear stabilization methods can be used  to get control over cross-wind derivatives as well. Extending such methods to surface PDEs is not within the scope of the present paper.

The error estimates in this paper are in terms of the maximum mesh size over tetrahedra in $\omega_h$, denoted by $h$.
In practice, the stabilization parameter $\delta_T$ is based on the \emph{local} Peclet number and the stabilization
is switched off or reduced in the regions, where the mesh is ``sufficiently fine''. To prove error estimates  accounting for local smoothness of the solution $u$ and the local mesh size (as available for planar SUPG FE method), one needs local interpolation
properties of finite element spaces, instead of \eqref{e:3.18}, \eqref{e:3.19}. Since our finite element space is based on traces of piecewise linear functions, such local estimates are not immediately available. The extension of our analysis to this \emph{non}-quasi-uniform case is left for future research.

Finally, we remark on the case of a varying reaction  term coefficient $c$. If the coefficient  $c$ in the third term of \eqref{e:2.3} varies, the above analysis is valid with minor modifications. We briefly explain these modifications. The stabilization parameter $\delta_T$ should
be based on  elementwise  values $c_T=\max_{\bx\in T} c(\bx)$.
For the well-possedness of \eqref{e:2.3}, it is sufficient to assume $c$ to be strictly positive  on a subset of $\Gamma$ with positive measure:
\[
\mathcal{A}:=\operatorname{meas}\{\bx\in\Gamma\,:\, c(\bx)\ge {c}_0\}>0,
\]
with some $c_0>0$.
If this is satisfied,  the Friedrich's type inequality~\cite{Sobolev} (see, also Lemma~3.1 in \cite{Reusken08})
\begin{equation*}
\|v\|_{L^2(\Gamma)}^2 \le C_F(\|\nabla_\Gamma v\|_{L^2(\Gamma)}^2+\|\sqrt{c} v\|_{L^2(\Gamma)}^2)\quad
\text{for all}~v\in V
\end{equation*}
holds, with a constant $C_F$ depending on $c_0$ and $\mathcal{A}$. Using this, all arguments in the analysis can be generalized to the case of a varying $c(\bx)$ with obvious modifications.
 With $c_{\min}:=\text{ess\,inf}_{\bx\in \Gamma} c(\bx)$
and $c_{\max}:=\text{ess\,sup}_{\bx\in \Gamma} c(\bx)$, the final error estimate takes the form
 \begin{equation*}
\|u^e-u_h\|_{\ast}\lesssim h\big(h^{1/2}+\eps^{1/2}+c^{\frac12}_{\max}h+\frac{h}{\sqrt{\eps+ c_{\min}}}+ \frac{h^3}{\sqrt{\eps}}\big)  (\|u\|_{H^2(\Gamma)}+\|f\|_{L^2(\Gamma)}).
\end{equation*}

\section{Numerical experiments} \label{sectexperiments}
In this section we show results of a few  numerical experiments which illustrate the performance of the method.

\begin{example}
 \label{example1} \rm  The stationary problem \eqref{e:2.3} is solved on the unit sphere $\Gamma$, with
 the velocity field
$$
\mathbf{w}(\bx)=(-x_2\sqrt{1-x_3^2},x_1\sqrt{1-x_3^2},0)^T,
$$
which  is tangential to the sphere.
We set $\varepsilon=10^{-6}$, $c \equiv 1$ and consider  the solution
$$
u(\mathbf{x})= \frac{x_1 x_2}{\pi}\mathrm{arctan}\left(\frac{x_3}{\sqrt{\varepsilon}}\right).
$$
Note that $u$ has a sharp internal layer along the
equator of the sphere. The corresponding right-hand side function $f$ is given
by
$$
f(\bx)=\frac{8\varepsilon^{3/2}(2+\varepsilon+2 x_3^2)x_1x_2x_3}{\pi(\varepsilon+4 x_3^2)^2}
 + \frac{6\varepsilon x_1x_2+\sqrt{x_1^2+x_2^2}(x_1^2-x_2^2)}{\pi}\mathrm{arctan}\left(\frac{x_3}{\sqrt{\varepsilon}}\right)+u.
$$

We consider the standard (unstabilized)  finite element method in \eqref{plainFEM}  and the stabilized method \eqref{e:2.8}.
A sequence of meshes was obtained by the gradual refinement of the outer triangulation. The induced    \emph{surface} finite element spaces have dimensions  $N=448,\, 1864,\, 7552,\, 30412$.
The resulting algebraic systems are solved by a direct sparse solver. Finite element  errors  are  computed outside the layer: The variation of the  quantities $err_{L^2}=\|u-u_h\|_{L^2(D)}$,
$err_{H^1}=\|u-u_h\|_{H^1(D)}$, and $err_{L^{inf}}=\|u-u_h\|_{L^{\infty}(D)}$,
 with $D=\{\mathbf{x}\in\Gamma\ :\ |x_3|>0.3\}$, are shown in Figure~\ref{fig:err_ex1}. The results clearly show that the stabilized method performs much better than the standard one.
The results for the stabilized method indicate  a $\mathcal{O}(h^2)$ convergence in the $L^2(D)$-norm and $L^{\infty}(D)$-norm.   In the $H^1(D)$-norm
 a first order convergence   is observed.
 Note that the analysis predicts (only)  $\mathcal{O}(h^{\frac{3}{2}})$ convergence order in the (global) $L^2$-norm.

\begin{figure}[ht!]
\begin{center}
\includegraphics[width=\textwidth]{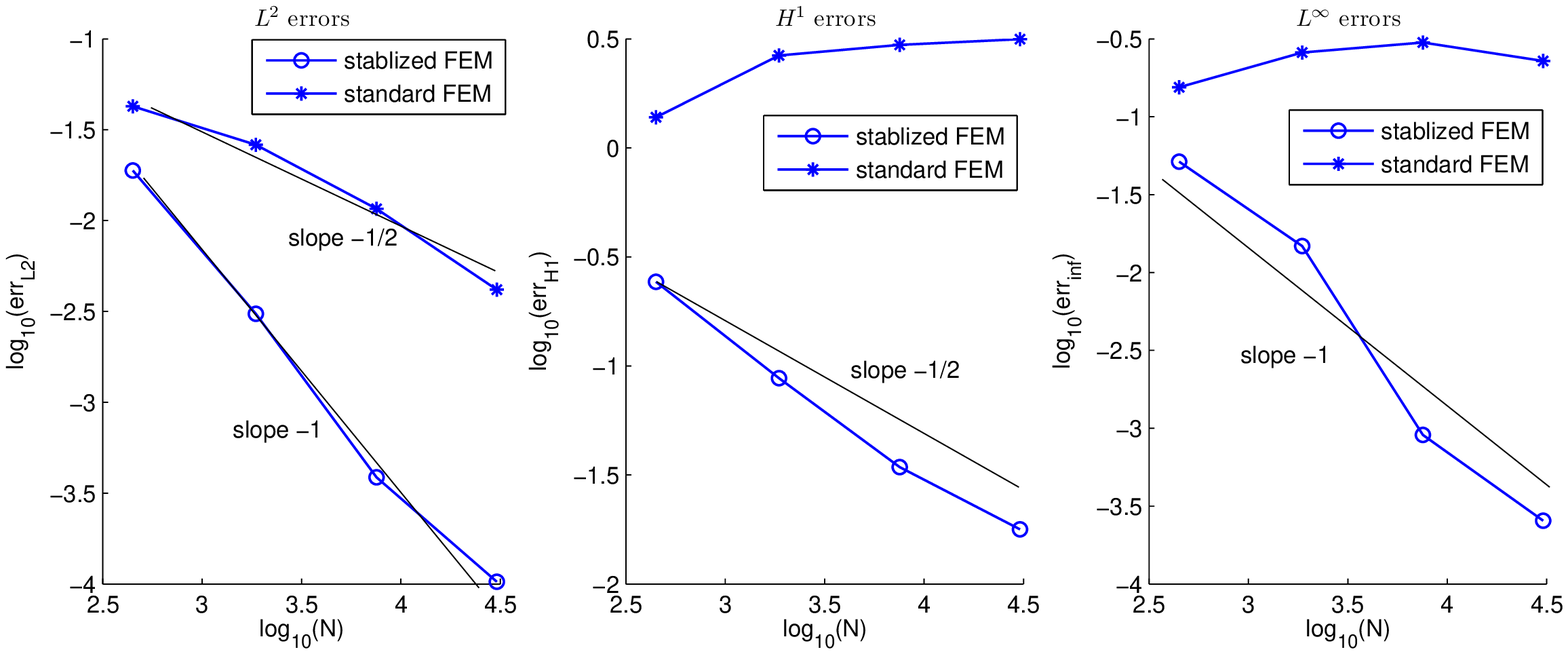}
\caption{Discretization errors for Example~\ref{example1}}\lbl{fig:err_ex1}
\end{center}
\end{figure}

Figure~\ref{fig:instable} shows the computed solutions with the two methods.
Since the layer is unresolved, the  finite element method \eqref{plainFEM} produces globally oscillating solution. The stabilized method gives a much better approximation, although the  layer
is slightly smeared, as is typical for the SUPG method.
\begin{figure}[ht!]
 \centering
  \subfigure[]{
    \includegraphics[width=2.2in]{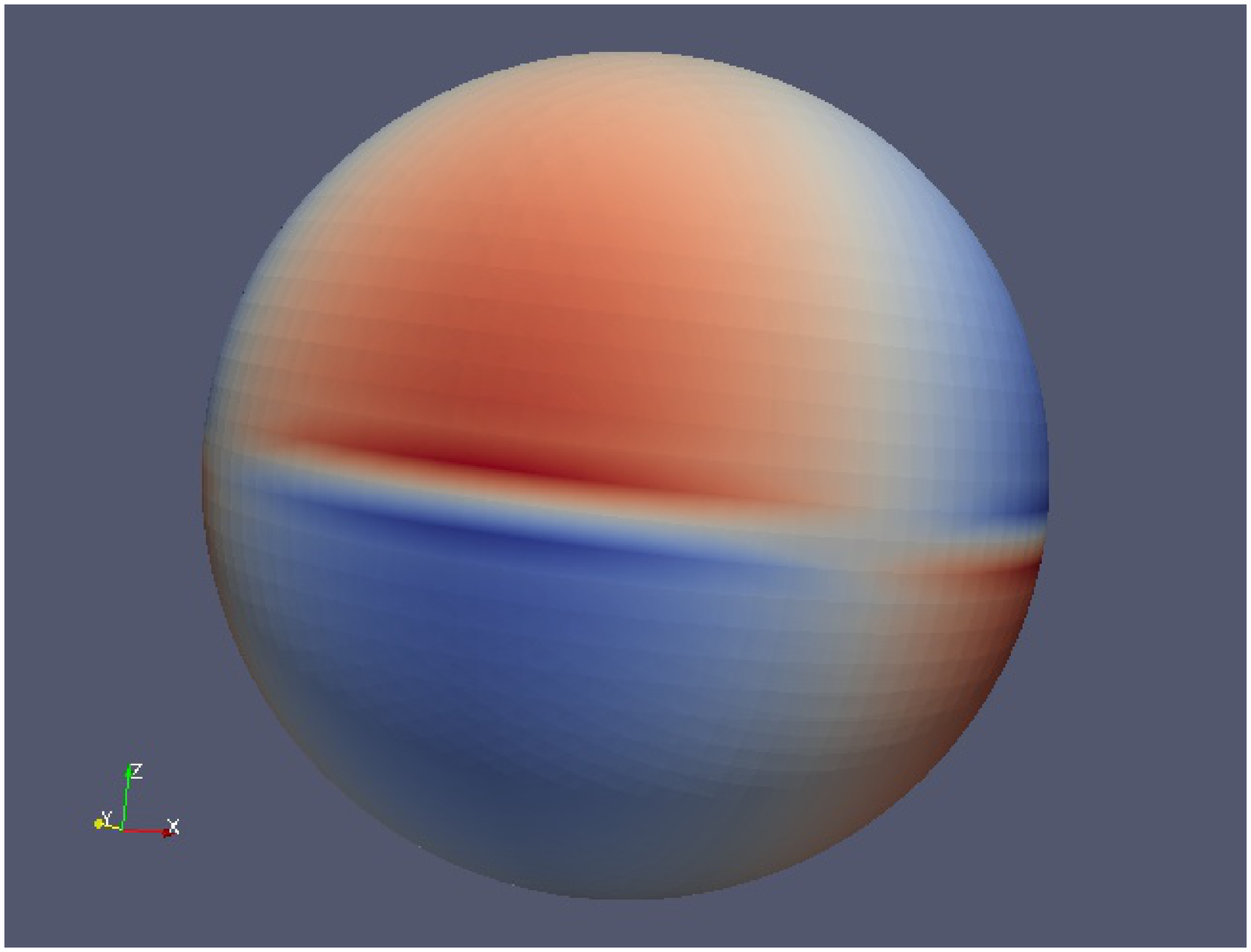}}
  \subfigure[]{
    \includegraphics[width=2.2in]{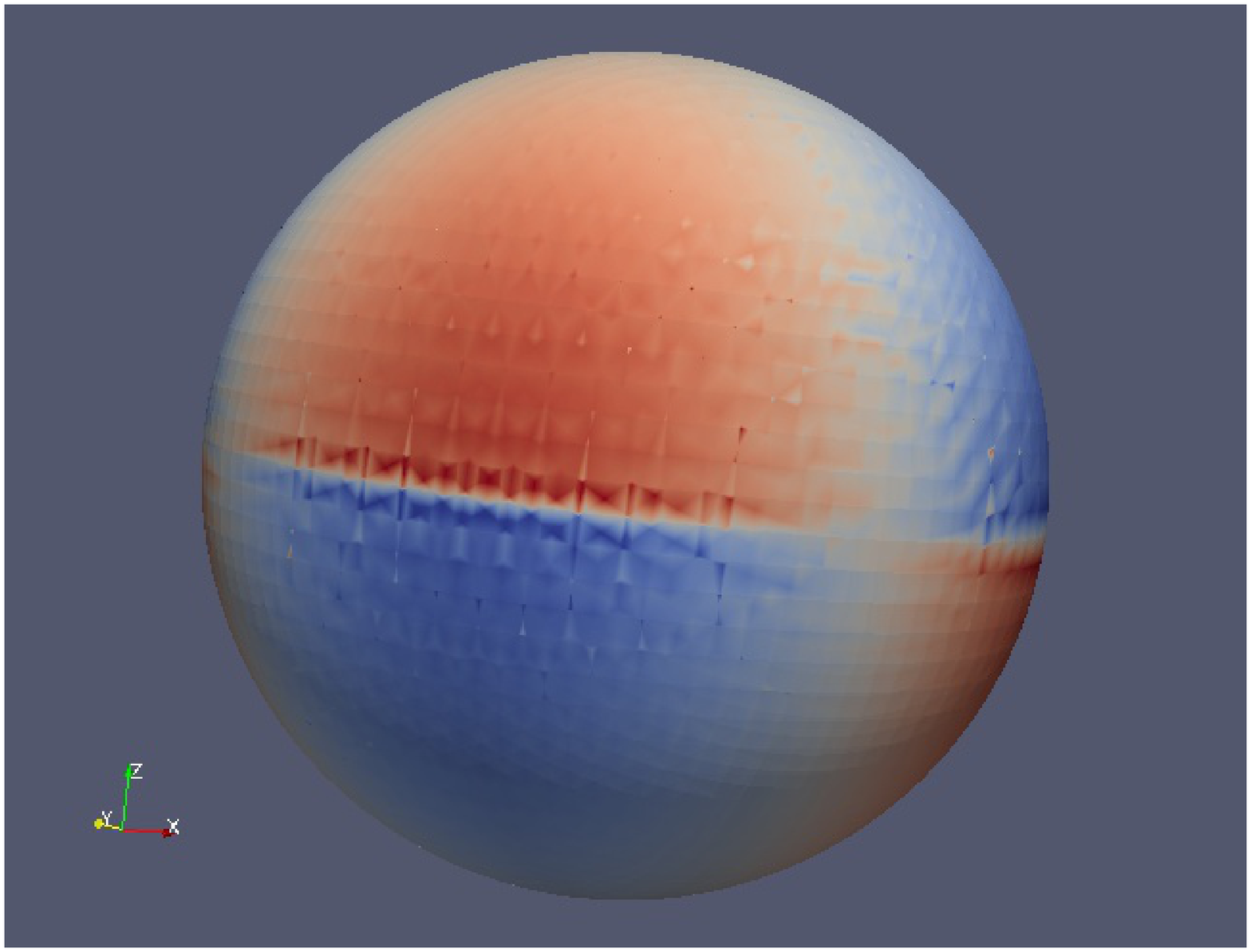}}
  \caption{Example~\ref{example1}: solutions using the stabilized method   and the  standard method \eqref{plainFEM}.}
  \lbl{fig:instable}
\end{figure}
\end{example}

\begin{example}
 \label{example2} \rm Now we consider the stationary problem \eqref{e:2.3} with $c \equiv 0$.
The problem is  posed on the unit sphere $\Gamma$,
with this same velocity field $\mathbf{w}$ as in Example~\ref{example1}.
We  set $\varepsilon=10^{-6}$,  and consider the solution
$$
u(\mathbf{x})= \frac{x_1x_2}{\pi}\mathrm{arctan}\left(\frac{x_3}{\sqrt{\varepsilon}}\right).
$$
The corresponding right-hand side function $f$ is now given by
$$
f(\bx)=\frac{8\varepsilon^{3/2}(2+\varepsilon+2 x_3^2)x_1x_2x_3}{\pi(\varepsilon+4 x_3^2)^2}
 + \frac{6\varepsilon x_1x_2+\sqrt{x_1^2+x_2^2}(x_1^2-x_2^2)}{\pi}\mathrm{arctan}\left(\frac{x_3}{\sqrt{\varepsilon}}\right).
$$

Note that for  $c = 0$ one looses explicit control of the $L_2$ norm in $\|\cdot\|_\ast$.
Thus we consider the streamline diffusion error :
$$
err_{SD}=\|\wt\cdot\nat (u-u_h)\|_{L^2(D)}.
$$
Results for this error quantity and for $err_{L^{inf}}=\|u-u_h\|_{L^{\infty}(D)}$   are shown in Figure~\ref{figg:err_ex2}.
We observe a  $\mathcal{O}(h)$ behavior for  the streamline diffusion error, which is consistent with
 our theoretical analysis. The  $L^{\infty}$-norm
of the error also shows a  first order of convergence.
\begin{figure}[ht!]
\begin{center}
\includegraphics[width=\textwidth]{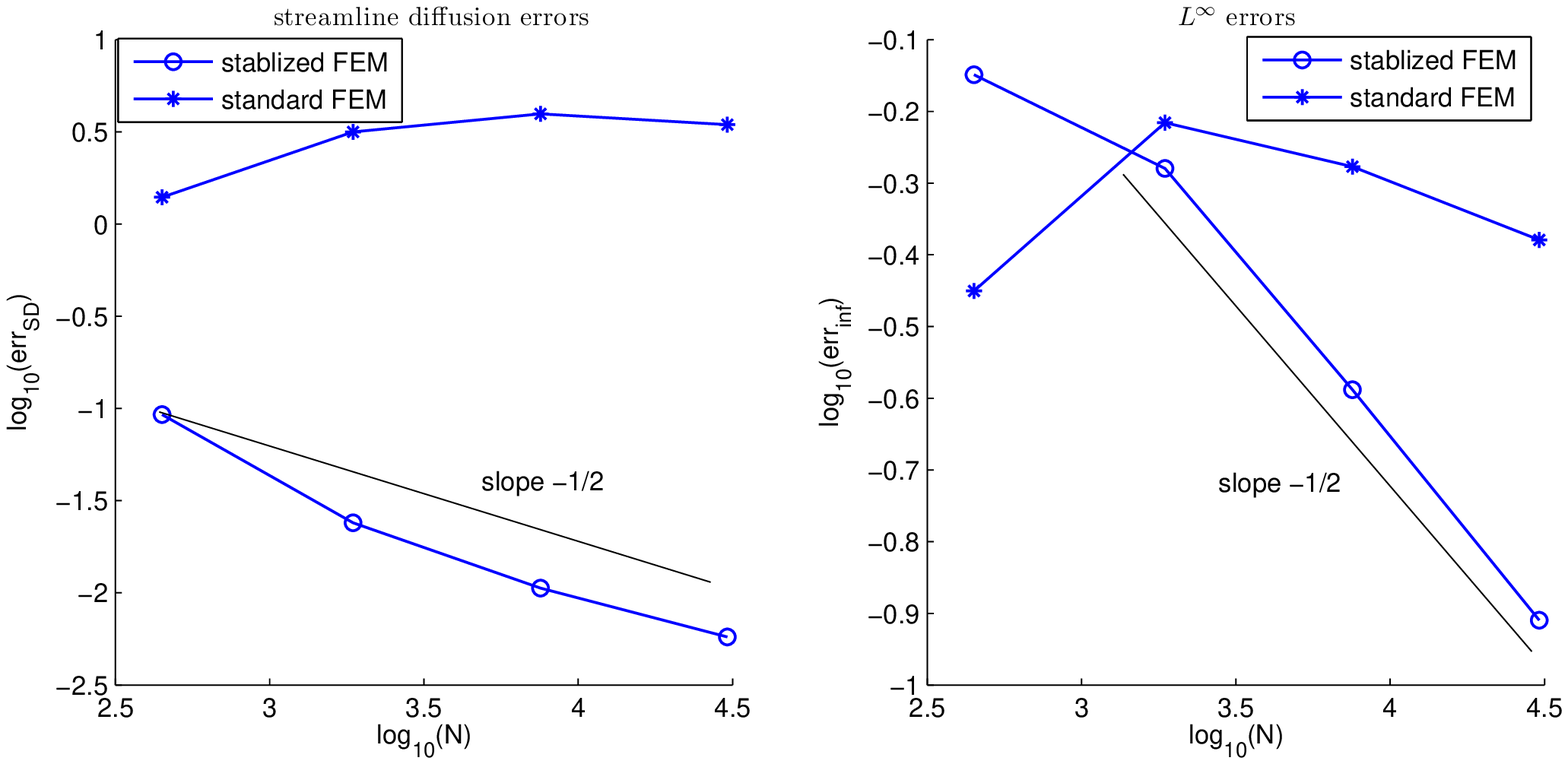}
\caption{Discretization errors for Example~\ref{example2}}\lbl{figg:err_ex2}
\end{center}
\end{figure}
\end{example}

\begin{example} \label{example3} \rm
We show how this stabilization can be applied to a time dependent problem and illustrate its stabilizing effect. We consider a non-stationary problem \eqref{e:2.2} posed on the  torus
\begin{equation}\label{e:torus}
\Gamma=\{(x_1,x_2,x_3)\ |\ (\sqrt{x_1^2+x_2^2}-1)^2+x_3^2=\frac{1}{16}\}.
\end{equation}
 We set $\varepsilon=10^{-6}$ and define  the advection field
$$
\mathbf{w}(\mathbf{x})=\frac{1}{\sqrt{x_1^2+x_2^2}}(-x_2,x_1, 0)^T,
$$
which is divergence free and satisfies $\bw \cdot \bn_\Gamma =0$.
The initial condition is
$$
u_0(\mathbf{x}) =  \frac{x_1x_2}{\pi}\mathrm{arctan}\left(\frac{x_3}{\sqrt{\varepsilon}}\right).
$$
The function $u_0$ possesses an internal layer, as shown  in Figure~\ref{fig:rotate}(a).

The stabilized spatial  semi-discretization of \eqref{e:2.2} reads: determine $u_h=u_h(t) \in V_h^\Gamma$ such that
\begin{equation} \label{spatttie}
m(\partial_t u_h, v_h) + \hat a_h(u_h,v_h)=0 \quad \text{for all}~~v_h \in V_h^\Gamma.
\end{equation}
with
\begin{equation*}
\begin{aligned}
m(\partial_t u,v):=&\int_{\Gamma_h}\partial_t u v \, \mathrm{d}\mathbf{s}+\sum_{T\in\mathcal{F}_h}\delta_{T}\int_{T} \partial_t u (\mathbf{w}^e\cdot\nabla_{\Gamma_h}v)\, \mathrm{d}\mathbf{s},\\
 \hat a_h(u,v):=& \varepsilon\int_{\Gamma_h}\nabla_{\Gamma_h} u\cdot\nabla_{\Gamma_h} v \, \mathrm{d}\mathbf{s} + \frac12\left[\int_{\Gamma_h}(\mathbf{w}^e\cdot\nabla_{\Gamma_h} u) v \, \mathrm{d}\mathbf{s} -\int_{\Gamma_h}(\mathbf{w}^e\cdot\nabla_{\Gamma_h} v) u \, \mathrm{d}\mathbf{s} \right]\\
 &+\sum_{T\in\mathcal{F}_h}\delta_T\int_{T}(-\varepsilon\Delta_{\Gamma_h}u + \mathbf{w}^e\cdot\nabla_{\Gamma_h} u)\mathbf{w}^e\cdot\nabla_{\Gamma_h} v \, \mathrm{d}\mathbf{s}.
\end{aligned}
\end{equation*}
Note that $ \hat a_h(\cdot,\cdot)$ is the same as $a_h(\cdot,\cdot) $ in \eqref{eqah} with $c \equiv 0$.
The resulting system of ordinary  differential equations  is discretized in time  by the  Crank-Nicolson scheme.

For $\varepsilon=0$ the exact solution is the transport  of $u_0(\mathbf{x})$
by a rotation around the $x_3$ axis. Thus the inner layer remains the same for all $t>0$. For  $\varepsilon=10^{-6}$, the exact solution is
similar, unless $t$ is large enough for dissipation to play a noticeable  role.
The space $V_h^\Gamma$ is constructed in the same way as in the previous examples. The spatial discretization has 5638 degrees of freedom.
The fully discrete problem is obtained by combining the SUPG method in \eqref{spatttie} and the Crank-Nicolson method with time step $\delta t=0.1$.
The evolution of the solution is illustrated in Figure~\ref{fig:rotate} demonstrating a smoothly `rotated' pattern.
\begin{figure}[ht!]
 \centering
  \subfigure[]{
    \includegraphics[width=2.1in]{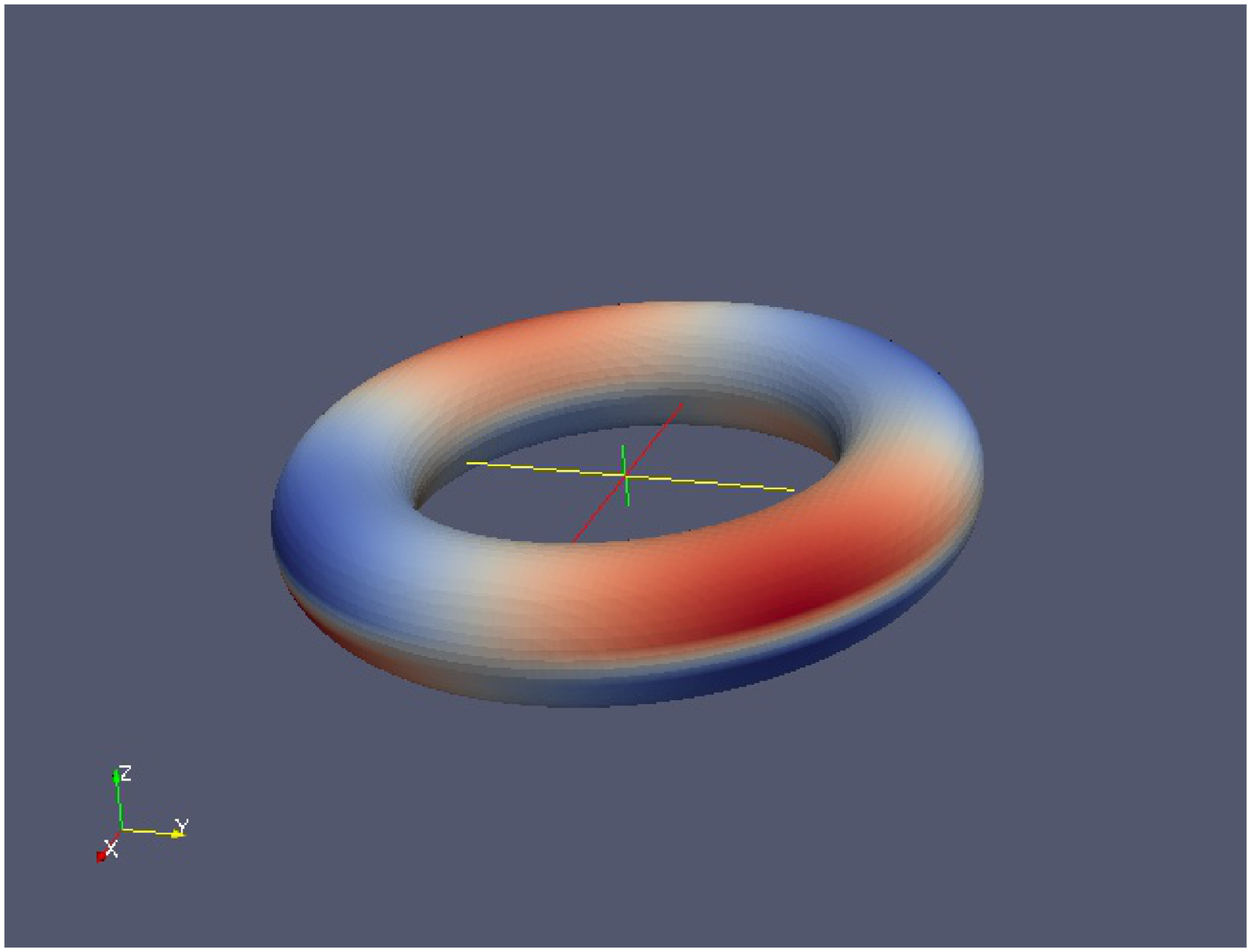}}
  \subfigure[]{
    \includegraphics[width=2.1in]{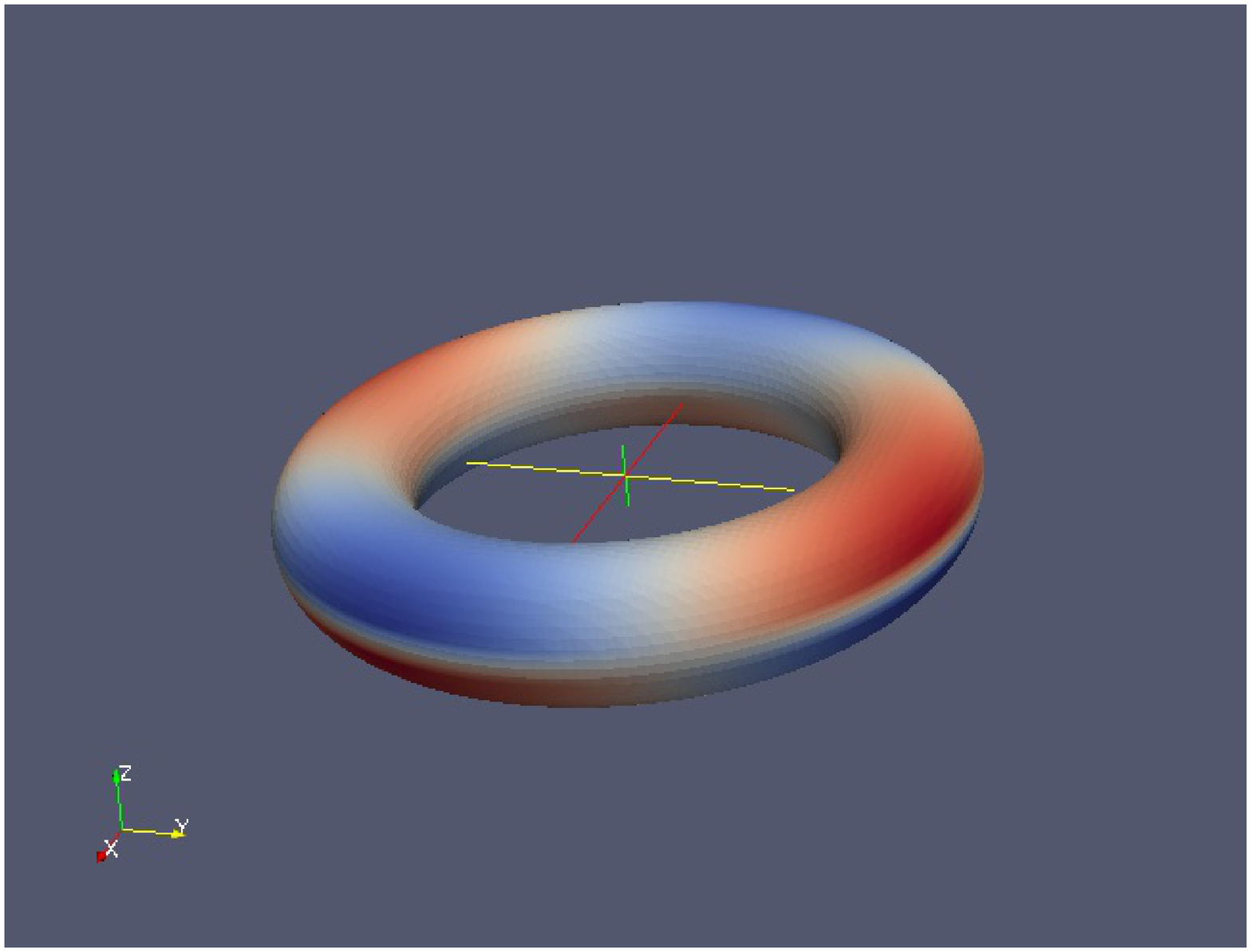}}
  \subfigure[]{
    \includegraphics[width=2.1in]{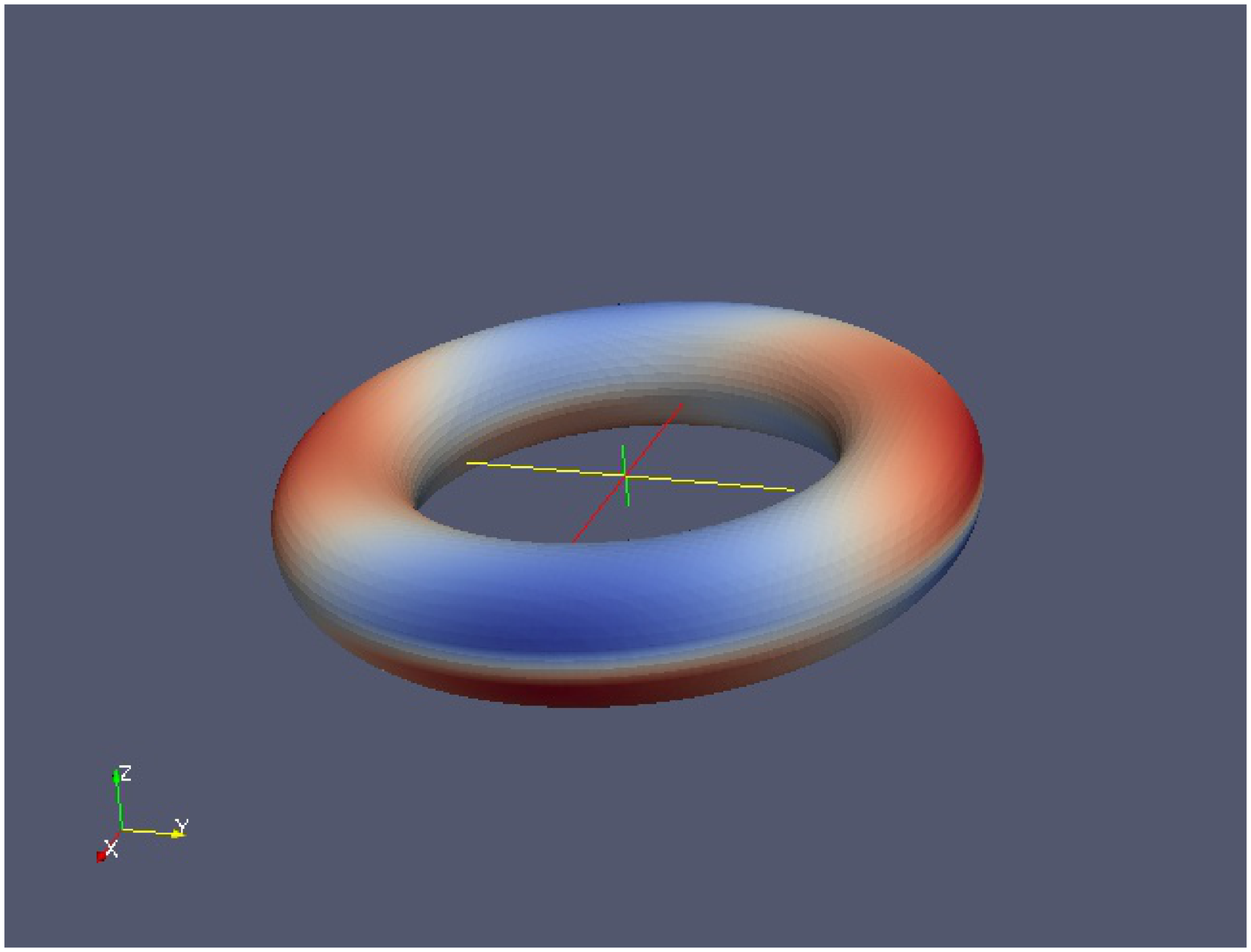}}
  \subfigure[]{
    \includegraphics[width=2.1in]{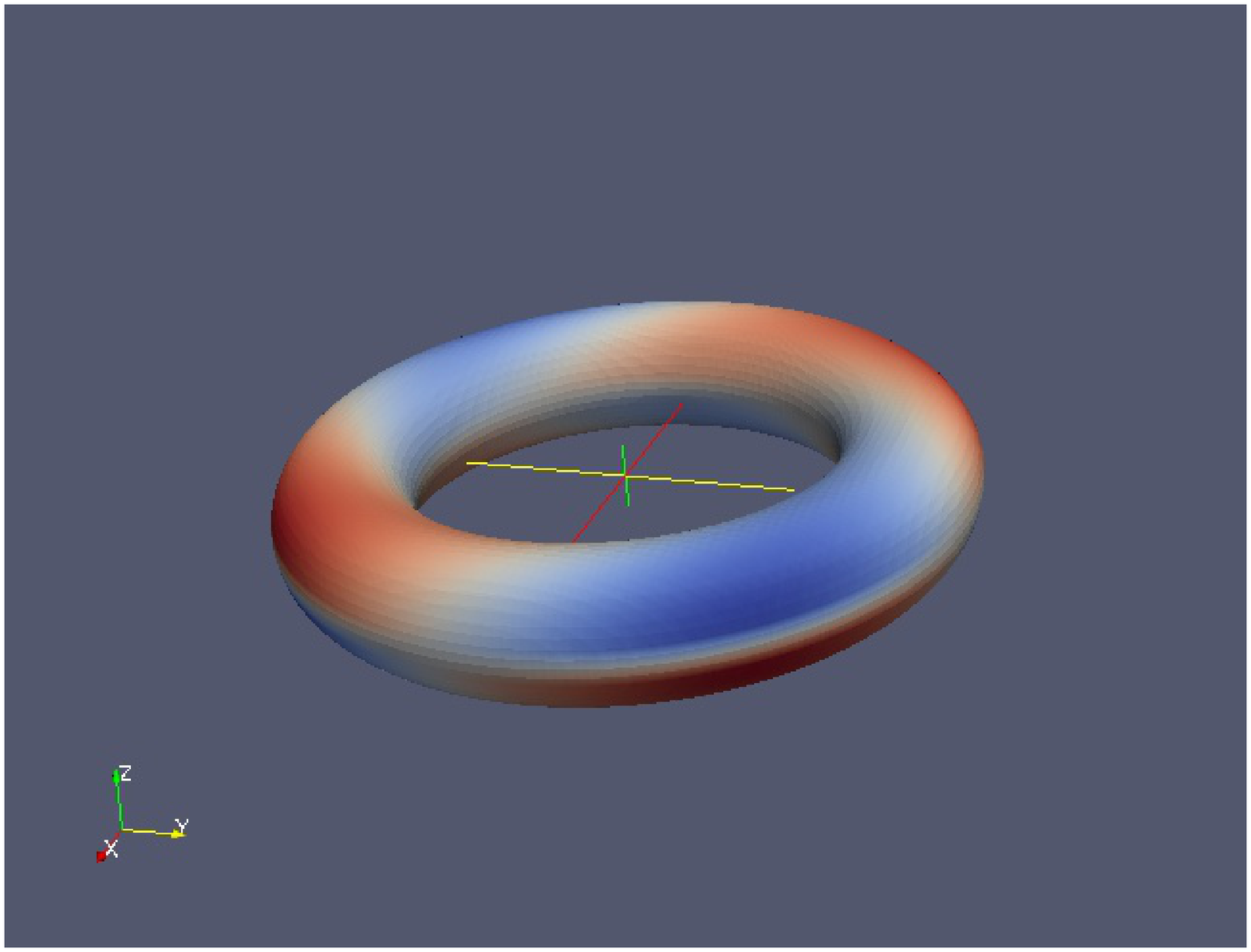}}
 \caption{Example~\ref{example3}: solutions for $t=0,0.6,1.2,1.8$ using the SUPG stabilized FEM.}\lbl{fig:rotate}
\end{figure}

We repeated this experiment with $\delta_T=0$ in  the bilinear forms $m(\cdot,\cdot)$ and $\hat a_h(\cdot, \cdot)$ in \eqref{spatttie}, i.e. the method without stabilization.
 As expected, we obtain  (on the same grid) much less smooth discrete solutions (Figure~\ref{fig:rotate1}).

\begin{figure}[ht!]
 \centering
  \subfigure[]{
    \includegraphics[width=2.1in]{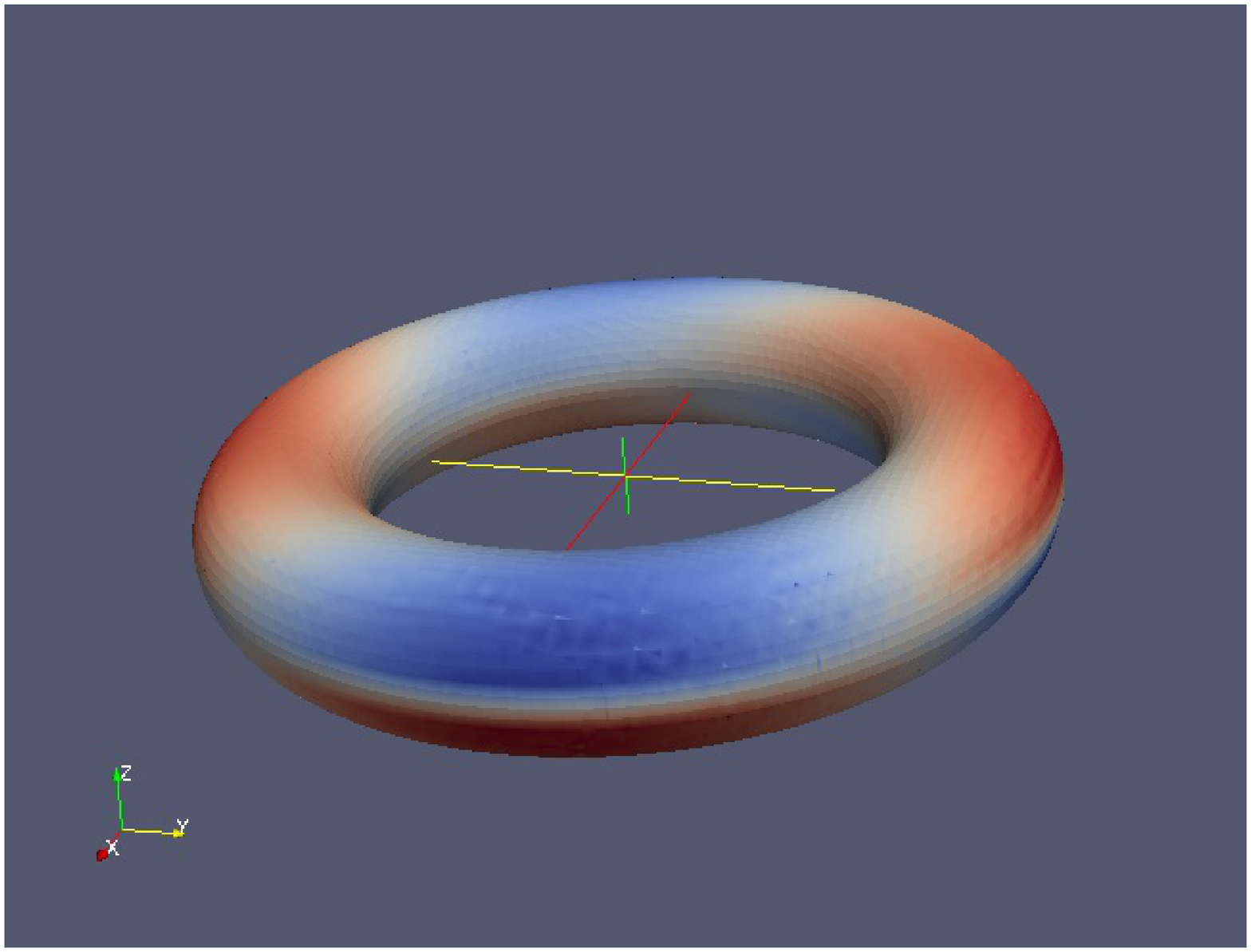}}
  \subfigure[]{
    \includegraphics[width=2.1in]{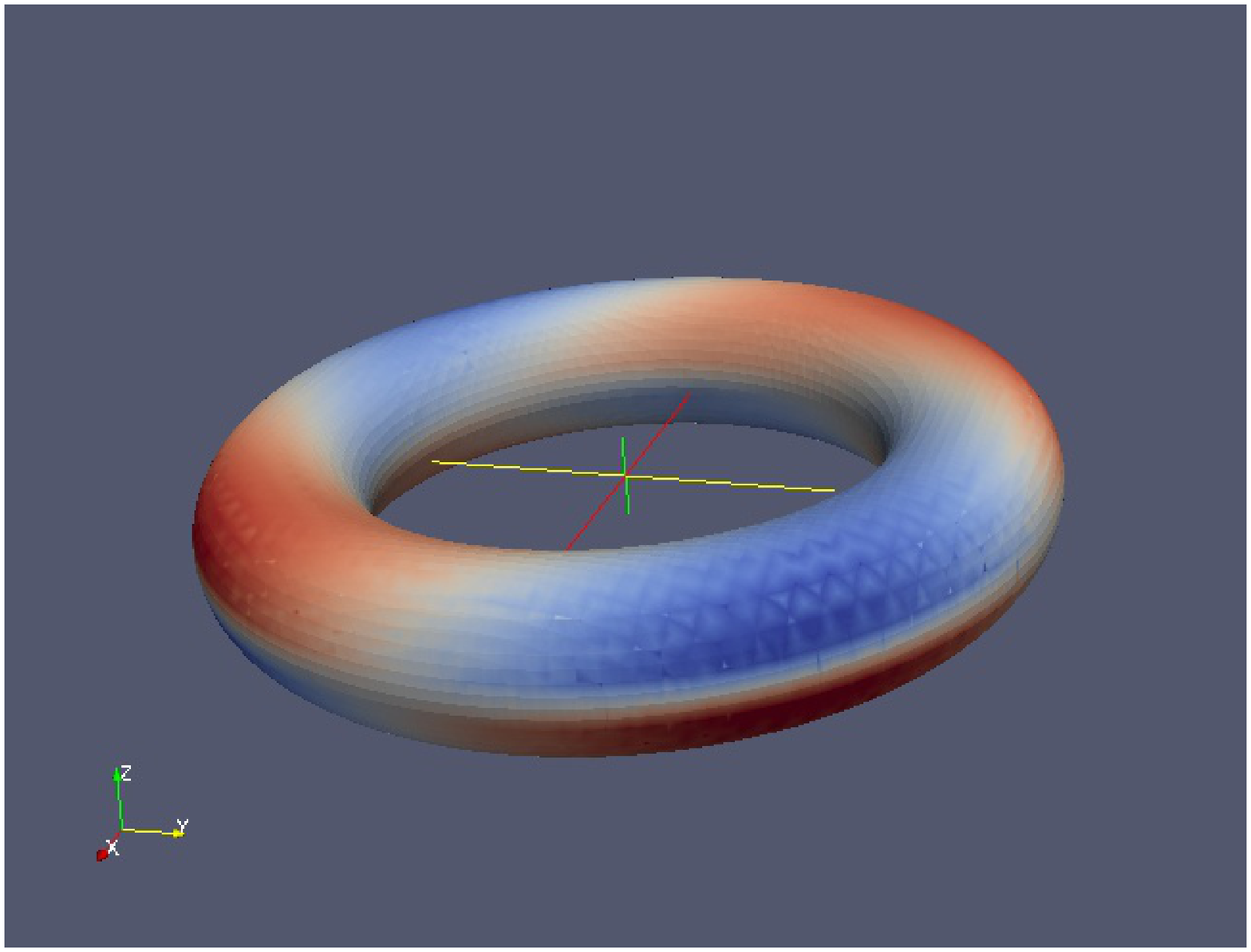}}
 \caption{Example~\ref{example3}: solutions for $t=1.2,1.8$ using the standard FEM.}\lbl{fig:rotate1}
\end{figure}

\begin{remark} \rm
With respect to mass conservation  of the scheme we note the following.  For  $v_h\equiv 1$ in \eqref{spatttie} we  get, with $M_h(t):=\int_{\Gamma_h}  u_h(x,t) \, \ds$, 
\begin{equation*}
 \begin{aligned}
|\frac{d}{dt} M_h(t)|&= |\int_{\Gamma_h} \frac12\mathbf{w}^e\cdot\nabla_{\Gamma_h} u_h\, \ds |\\
& =|-\frac12\sum_{E\in\mathcal{E}_h}\int_{E}\mathbf{w}^e\cdot[\mathbf{m}] u_h\, \ds+\frac12\int_{\Gamma_h} \divth\bw^e u_h\, \ds|
\\
&\lesssim h^2\sum_{E\in\mathcal{E}_h}\int_{E}|u_h|\, \ds+Ch\int_{\Gamma_h}|u_h|\, \ds.
 \end{aligned}
\end{equation*}
Here we used estimates from Lemmas 3.1 and  3.5. Using  Lemma 3.6 we get 
\begin{align*} \sum_{E\in\mathcal{E}_h}\int_{E}|u_h| \, \ds & \lesssim
h^{- \frac12} \Big( \sum_{E\in\mathcal{E}_h}\int_{E} u_h^2\ds \Big)^\frac12  \\ & \lesssim h^{-1}\big( \|u_h\|_{L^2(\Gamma_h)} + h \|\nabla_{\Gamma_h} u_h \|_{L^2(\Gamma_h)}).
\end{align*}
Assume that for the discrete solution we have a bound $\|u_h\|_{L^2(\Gamma_h)}   + h \|\nabla_{\Gamma_h} u_h \|_{L^2(\Gamma_h)} \leq c$ with $c$ independent of $h$. Then we obtain $|\frac{d}{dt} M(t)| \lesssim h$ and thus
$|M_h(t)-M_h(0)| \leq c  t h$, with a constant $c$ independent of $h$ and $t$. 
  Hence, with respect to mass conservation we 
  have an error that is (only)  first order in $h$. Concerning mass conservation it would be better to use a discretization in which
  in the discrete bilinear form in \eqref{eqah} one replaces
 \begin{equation} \label{convform}
 \frac12\left[\int_{\Gamma_h}(\bw^e\cdot\nath u) v \, \ds -\int_{\Gamma_h}(\bw^e\cdot\nath v) u\,  \ds \right]\quad\text{by}\quad -\int_{\Gamma_h}(\bw^e\cdot\nath v) u\,  \ds.
\end{equation}
This method results in optimal mass conservation: $\frac{d}{dt} M_h(t)=0$.
It turns out, however, that (with our approach) it is more difficult to analyze. In particular, it is not clear how to derive a satisfactory coercivity bound. 

In numerical experiments we observed  that the behavior of the two methods (i.e, with the two variants given in \eqref{convform}) is very similar. In particular, the mass conservation error bound  $|M(t)-M(0)| \leq c  t h$ for the first method seems to be too pessimistic (in many cases). To illustrate this, we show results for the problem described above, but  with  initial condition 
$$
u_0(\mathbf{x}) = 1+ \frac{1}{\pi}\mathrm{arctan}\left(\frac{x_3}{\sqrt{\varepsilon}}\right),\quad
\int_{\Gamma} u_0 \, \ds=\pi^2\approx 9.8696.
$$
Figure~\ref{fig:mass} shows the quantity $M_h(t)$ for several mesh sizes $h$. For $t=0$ we have, due to interpolation of the initial condition $u_0$,  a difference between $M_h(0)$ and $\int_{\Gamma} u_0 \ds$ that is of order $h^2$. For $t >0$ we see, except for the very coarse mesh with $h=1/4$ a very good mass conservation.
\end{remark}

\begin{figure}[ht!]
\begin{center}
\includegraphics[width=0.7\textwidth]{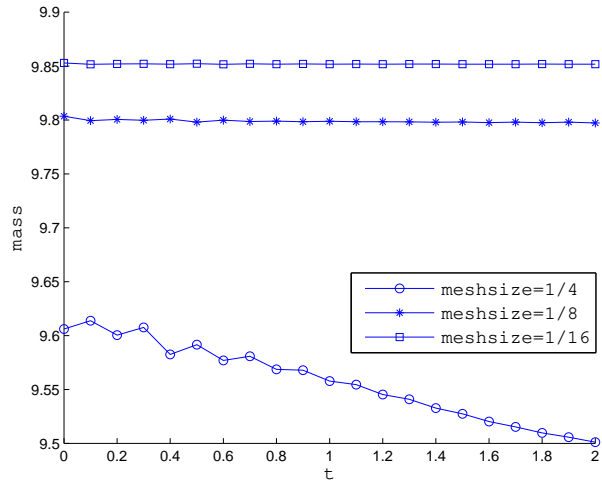}
\caption{Total mass variation for Example~\ref{example3}}\lbl{fig:mass}
\end{center}
\end{figure}
\end{example}

\begin{example}
  \label{example4} \rm As a final illustration we show results for the non-stationary problem \eqref{e:2.2}, but now on a surface  with a ``less regular''  shape. We take the surface given in \cite{Dziuk88}:
\begin{equation}\label{e:Dziuk}
\Gamma=\{(x_1,x_2,x_3)\ |\ (x_1-x_3^2)^2+x_2^2+x_3^2=1\}.
\end{equation}
 We set $\varepsilon=10^{-6}$ and define the advection field as the $\Gamma$-tangential part of
$
\tilde{\mathbf{w}}=(-1,0,0)^T.
$
This velocity field  does not satisfy the divergence free condition.
The initial condition is taken as
$
u_0(\mathbf{x}) =  1.
$
We apply the same method as in Example~\ref{example3}. The mesh size is $1/8$ and the time step is $\delta t=0.1$.
Figure~\ref{fig:dziuk} shows the solution for several $t$ values. We observe that as time evolves mass is transported from the two poles on the right to  the left pole,
just as expected. Our discretization yields for this strongly convection-dominated transport problem a qualitatively good discrete result, even with a (very) low grid resolution. 
\begin{figure}[ht!]
 \centering
  \subfigure[]{
    \includegraphics[width=2.1in]{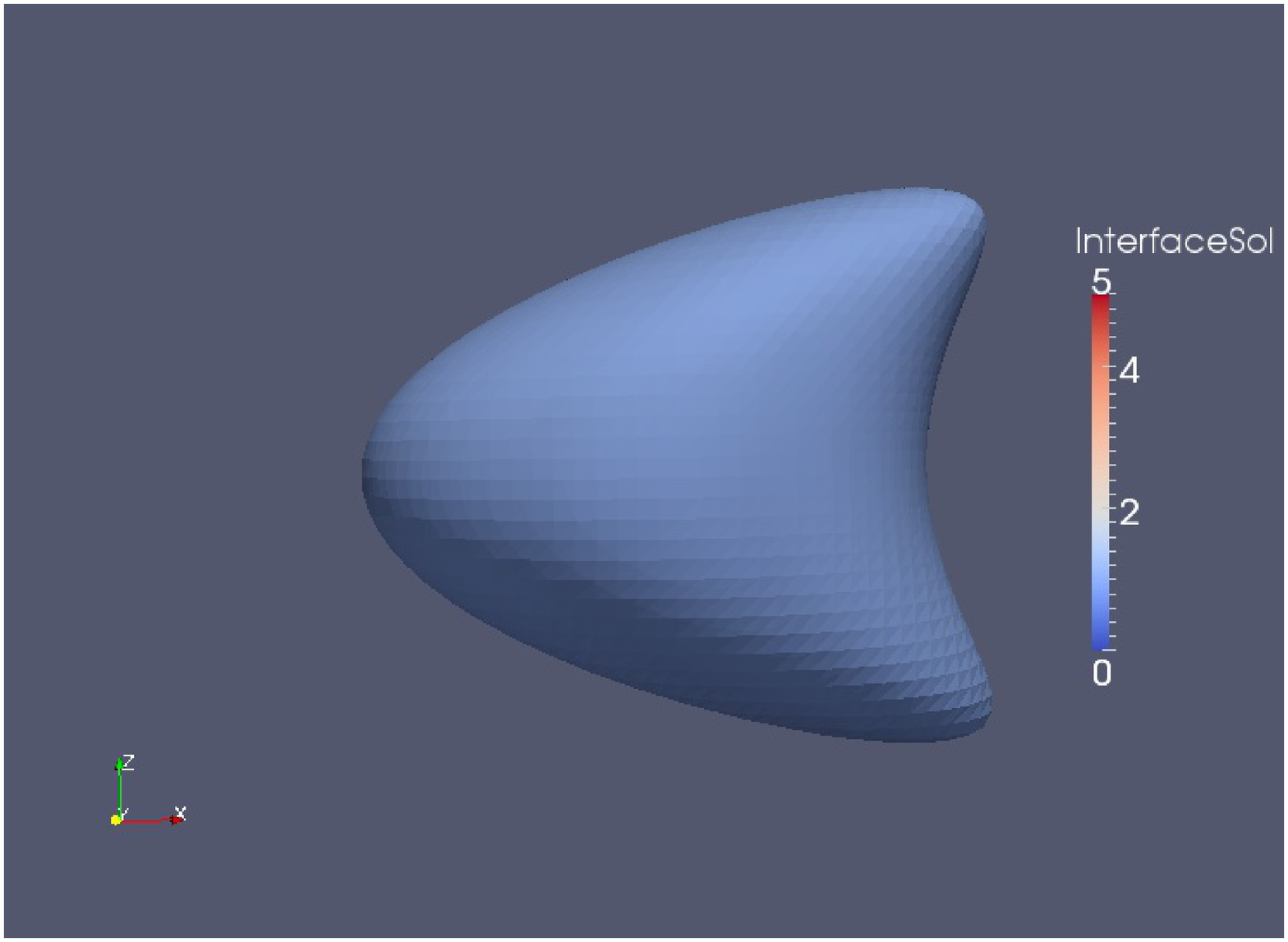}}
  \subfigure[]{
    \includegraphics[width=2.1in]{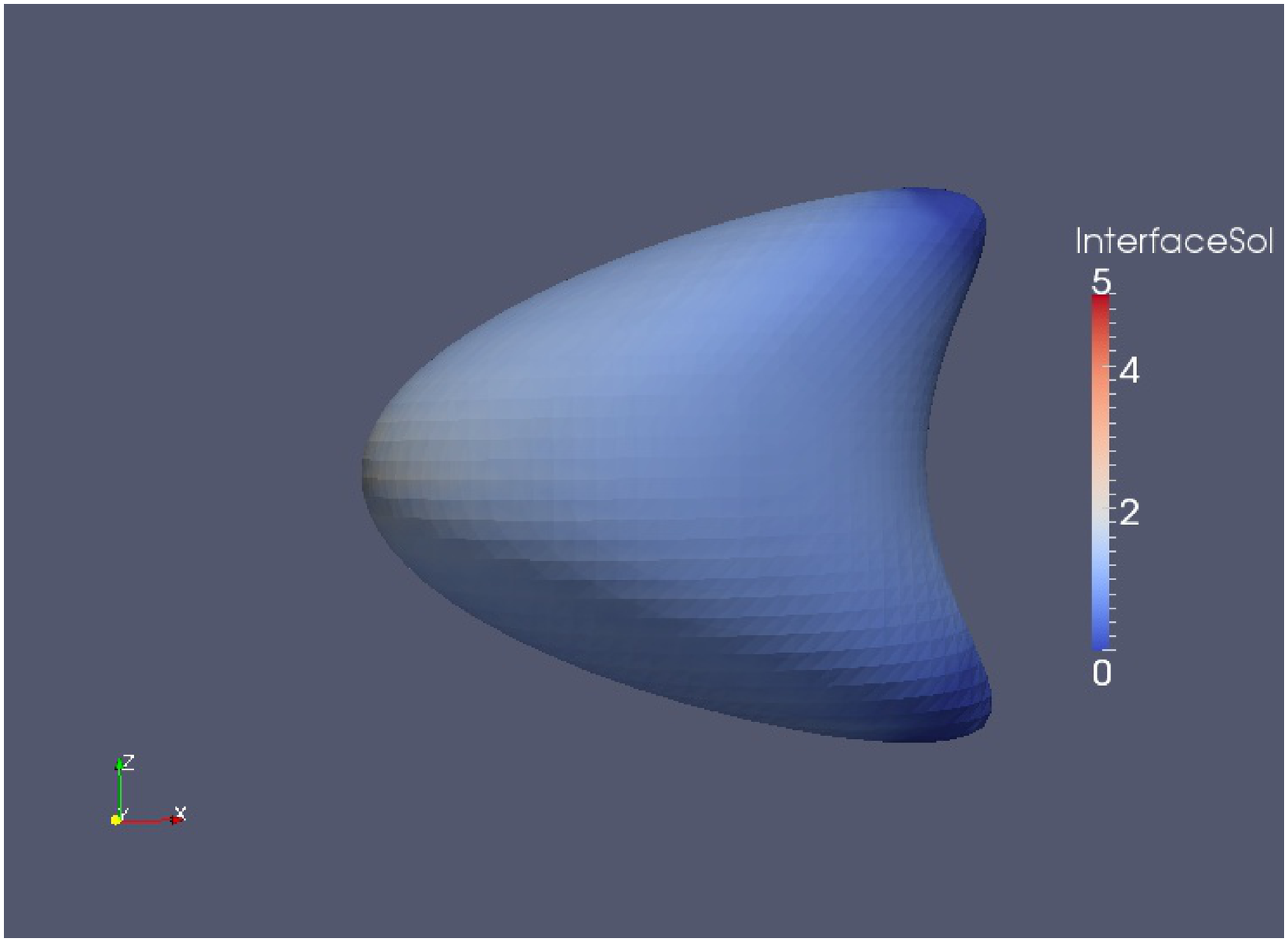}}
  \subfigure[]{
    \includegraphics[width=2.1in]{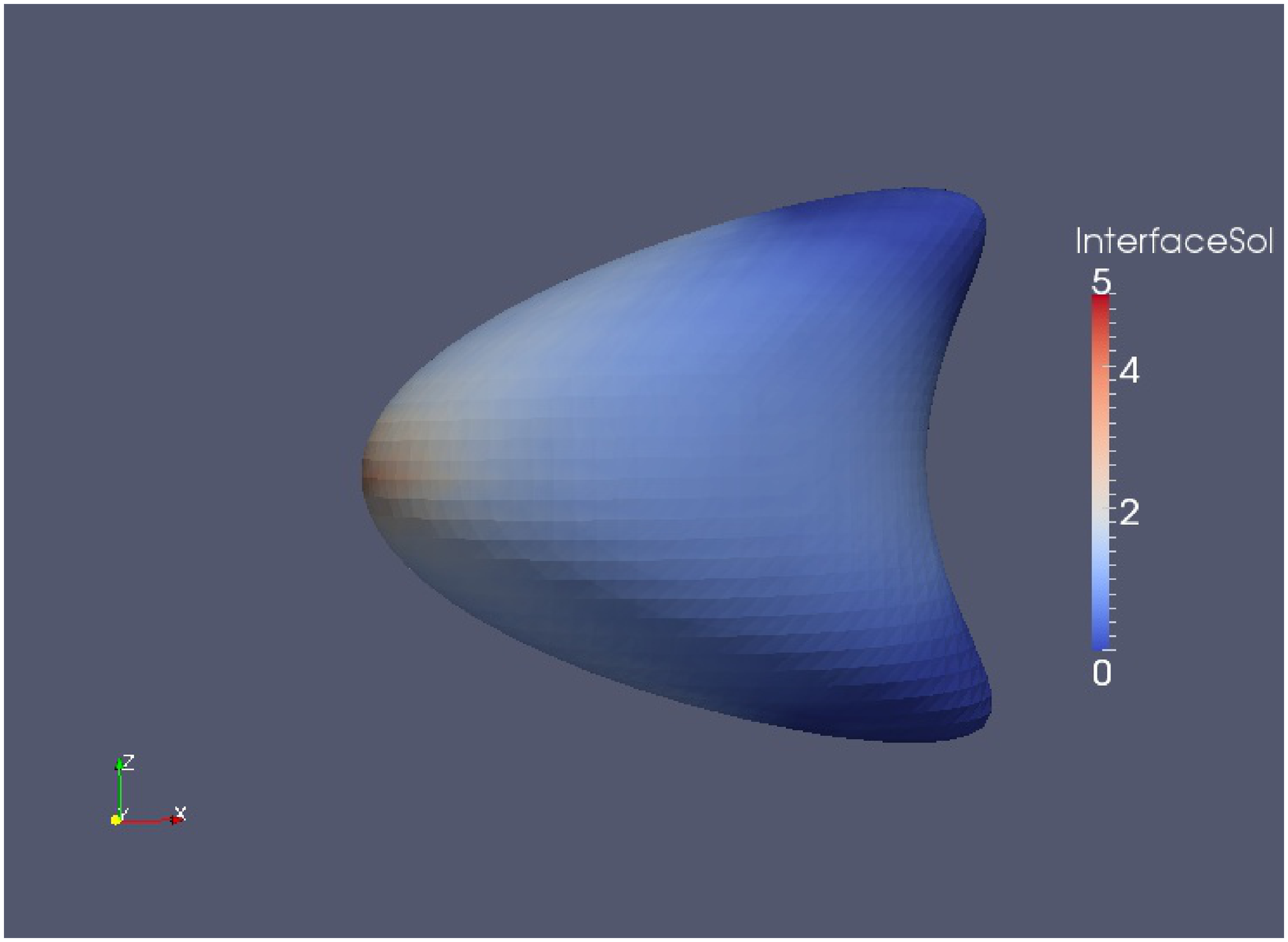}}
  \subfigure[]{
    \includegraphics[width=2.1in]{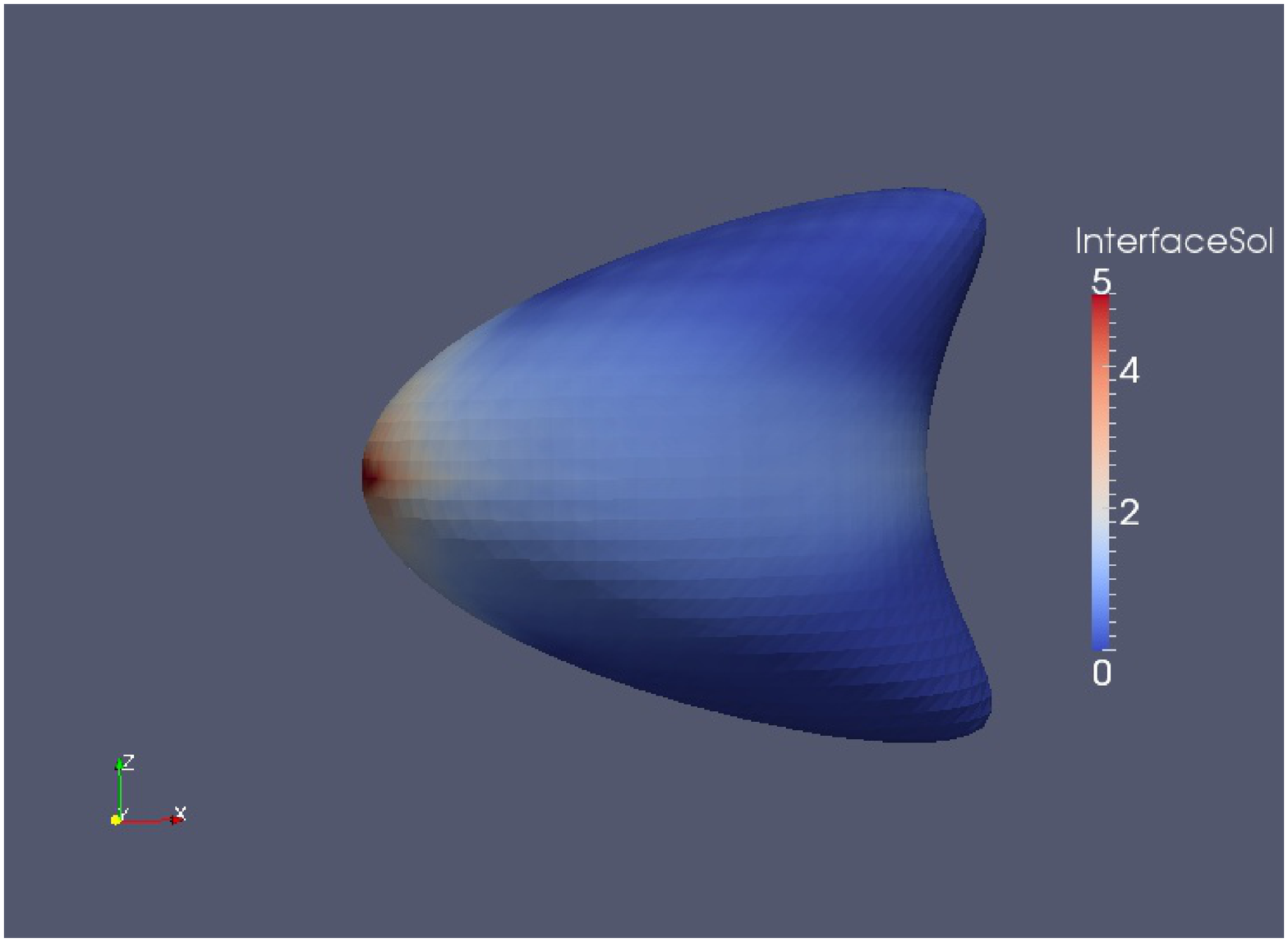}}
 \caption{Example~\ref{example4}: solutions for $t=0,0.5,1.0,2.0$ using the SUPG stabilized FEM.}\lbl{fig:dziuk}
\end{figure}
\end{example}

\section*{Acknowledgments}
This work has been supported in part by the DFG  through grant RE1461/4-1, by the Russian Foundation for  Basic Research through grants 12-01-91330, 12-01-00283, and by NSFC project 11001260.
We thank J. Grande for his support with the implementation of the methods  and C. Lehrenfeld for useful  discussions.

\bibliographystyle{unsrt}
\bibliography{literatur}
\end{document}